\theoremstyle{plain}
\newtheorem{theorem}{Theorem}[section]
\newtheorem{corollary}[theorem]{Corollary}
\newtheorem{lemma}[theorem]{Lemma}
\newtheorem{proposition}[theorem]{Proposition}
\theoremstyle{definition}
\newtheorem{definition}[theorem]{Definition}
\theoremstyle{remark}
\numberwithin{equation}{section}
\newcommand{\R}{\mathbb{R}}
\newcommand{\Rn}{\mathbb{R}^n}
\newcommand{\ve}{\vert}
\newcommand{\lv}{\left\vert}
\newcommand{\rv}{\right\vert}
\newcommand{\Ve}{\Vert}
\newcommand{\lb}{\left\lbrace}
\newcommand{\rb}{\right\rbrace}
\newcommand{\us}{\underline{\sigma}}
\newcommand{\os}{\overline{\sigma}}
\newcommand{\uC}{\underline{C}}
\newcommand{\oC}{\overline{C}}
\begin{document}

\title[Integro-differential operators with kernels of variable orders]
{Regularity for fully nonlinear integro-differential operators with kernels of variable orders}

\author{Minhyun Kim}
\address{Department of Mathematical Sciences, Seoul, Korea}
\email{201421187@snu.ac.kr}
\author{Ki-Ahm Lee}
\address{Department of Mathematical Sciences, Seoul, Korea \& Korea Institute for Advanced Study, Seoul, Korea}
\email{kiahm@snu.ac.kr}

\subjclass[2010]{35J60, 47G20, 35B65.}

\begin{abstract}
We consider fully nonlinear elliptic integro-differential operators with kernels of variable orders, which generalize the integro-differential operators of the fractional Laplacian type in \cite{CS}. Since the order of differentiability of the kernel is not characterized by a single number, we use the constant
\begin{align*}
C_\varphi = \left( \int_{\Rn} \frac{1-\cos y_1}{\ve y \ve^n \varphi (\ve y \ve)} \, dy \right)^{-1}
\end{align*}
instead of $2-\sigma$, where $\varphi$ satisfies a weak scaling condition. We obtain the uniform Harnack inequality and H\"older estimates of viscosity solutions to the nonlinear integro-differential equations.
\end{abstract}

\maketitle

\section{Introduction}

In this paper we consider fully nonlinear elliptic integro-differential operators. By the L\'evy Khintchine formula, the generator of an $n$-dimensional pure jump process is given by
\begin{align} \label{e:L general}
Lu(x) = \int_{\Rn} \left( u(x+y) - u(x) - \nabla u(x) \cdot y \chi_{B_1} (y) \right) \, d\mu(y),
\end{align}
where $\mu$ is a measure such that $\int_{\Rn} \ve y \ve^2 / (1 + \ve y \ve^2) \, d\mu(y) < \infty$. Note that the value of $Lu(x)$ is well-defined as long as $u$ is bounded in $\Rn$ and $C^{1,1}$ in a neighborhood of $x$. Since the operators are given in too much generality, we restrict ourselves to the operators given by symmetric kernels $K$. In this case, the operator \eqref{e:L general} can be written as
\begin{align} \label{e:L symmetric}
Lu(x) = \int_{\Rn} (u(x+y) + u(x-y) - 2u(x)) K(y) \, dy,
\end{align}
and the kernel $K$ satisfies
\begin{align} \label{a:K}
\int_{\Rn} \ve y \ve^2 / (1+\ve y \ve^2) K(y) \, dy < \infty.
\end{align}
For the notational convenience we write $\delta(u, x, y) = u(x+y) + u(x-y) - 2u(x)$ in the sequel. Nonlinear integro-differential operators such as
\begin{align*}
Iu(x) = \sup_\alpha L_\alpha u(x) \quad \text{or} \quad Iu(x) = \inf_\beta \sup_\alpha L_{\alpha\beta} u(x)
\end{align*}
arise in the stochastic control theory and the game theory. A characteristic property of these operators is that
\begin{align*}
\inf_{\alpha\beta} L_{\alpha\beta} v(x) \leq I(u+v)(x) - Iu(x) \leq \sup_{\alpha\beta} L_{\alpha\beta} v(x).
\end{align*}
Caffarelli and Silverstre \cite{CS} introduced the concept of ellipticity for more general nonlinear operators $I$: for a class of linear integro-differential operators $\mathcal{L}$ it holds that
\begin{align*}
\mathcal{M}^-_{\mathcal{L}} (u-v)(x) \leq Iu(x) - Iv(x) \leq \mathcal{M}^+_{\mathcal{L}} (u-v)(x),
\end{align*}
where $\mathcal{M}^+_{\mathcal{L}}$ and $\mathcal{M}^-_{\mathcal{L}}$ are a maximal and a minimal operator with respect to $\mathcal{L}$, defined by
\begin{align*}
\mathcal{M}^+_{\mathcal{L}} u(x) = \sup_{L \in \mathcal{L}} Lu(x) \quad\text{and} \quad \mathcal{M}^-_{\mathcal{L}} u(x) = \inf_{L \in \mathcal{L}} Lu(x),
\end{align*}
respectively. See \cite{CC} for elliptic second-order differential operators. We adopt this concept and will give a precise definition in Section \ref{s:3 VS}.

Caffarelli and Silverstre \cite{CS} considered fully nonlinear integro-differential operators with kernels comparable to those of fractional Laplacian to obtain regularity results. That is, they considered the class of operators of the form \eqref{e:L symmetric} with
\begin{align*}
(2-\sigma) \frac{\lambda}{\ve y \ve^{n+\sigma}} \leq K(y) \leq (2-\sigma) \frac{\Lambda}{\ve y \ve^{n+\sigma}},
\end{align*}
where $0 < \sigma < 2$. They obtained regularity estimates that remain uniform as the order of the equation $\sigma$ approaches 2 and therefore made the theory of integro-differential equations and elliptic differential equations appear somewhat unified. More generally, in \cite{KKL} the authors generalized these results to fully nonlinear integro-differential operators with regularly varying kernels. More precisely, they considered the class of operators of the form \eqref{e:L symmetric} with
\begin{align*}
(2-\sigma) \lambda \frac{l(\ve y \ve)}{\ve y \ve^n} \leq K(y) \leq (2-\sigma) \Lambda \frac{l(\ve y \ve)}{\ve y \ve^n},
\end{align*}
where $l : (0, \infty) \rightarrow (0, \infty)$ is a locally bounded, regularly varying function at zero with index $-\sigma$. In both cases, the constant $2-\sigma$ plays a very important role in uniform regularity estimates. They used the constant $2-\sigma$ instead of the constant in the fractional Laplacian 
\begin{align*}
C(n, \sigma) = \left( \int_{\Rn} \frac{1-\cos y_1}{\ve y \ve^{n+\sigma}} \, dy \right)^{-1} = \frac{2^\sigma \Gamma(\frac{n+\sigma}{2})}{\pi^{n/2} \ve \Gamma (-\frac{\sigma}{2}) \ve}
\end{align*}
because two constants $2-\sigma$ and $C(n, \sigma)$ have the same asymptotic behavior as $\sigma$ approaches 2 and they focused on regularity estimates which remain uniform as $\sigma$ approaches 2.

In this paper we will consider kernels of variable orders. In this case the order of the kernel cannot be characterized in a single number. This implies that we need to consider the constant which contains all information of the kernel to generalize the results of \cite{CS}. We will define this constant in Section \ref{s:IO}

\subsection{Integro-differential Operators} \label{s:IO}

In order to obtain regularity results, we need to impose some assumptions on the kernel $K$. Throughout this paper, we will assume that the kernel $K$ satisfies
\begin{align} \label{a:K comp}
C_\varphi \frac{\lambda}{\ve y \ve^n \varphi(\ve y \ve)} \leq K(y) \leq C_\varphi \frac{\Lambda}{\ve y \ve^n \varphi(\ve y \ve)}
\end{align}
for some constants $0 < \lambda \leq \Lambda < \infty$, where a function $\varphi : (0, \infty) \rightarrow (0, \infty)$ and a constant $C_\varphi$ will be defined below.

We first assume that the function $\varphi$ satisfies a {\it weak scaling condition} with constants $a \geq 1$ and $0 < \us \leq \os < 2$, i.e.,
\begin{align} \label{a:wsc}
a^{-1} \left( \frac{R}{r} \right)^{\us} \leq \frac{\varphi(R)}{\varphi(r)} \leq a \left( \frac{R}{r} \right)^{\os} \quad \text{for all} ~ 0 < r \leq R < \infty.
\end{align}
The simplest example of this function is $\varphi(r) = r^\sigma$ with $\sigma \in (0,2)$, which corresponds to the fractional Laplacian. However, more general functions such as $\varphi(r) = r^{\us} + r^{\os}, \varphi(r) = r^{\us} (\log (1+r^{-2}))^{-(2-\os)/2}$, and $\varphi(r) = r^{\os} (\log(1+r^{-2}))^{\us/2}$ are covered.

We next observe that if we take the Fourier transform to the operator
\begin{align*}
L_0 u(x) = \int_{\Rn} \frac{u(x+y) + u(x-y) - 2u(x)}{\ve y \ve^n \varphi(\ve y \ve)} \, dy,
\end{align*}
then
\begin{align*}
- \mathscr{F}(L_0 u)(\xi) 
&= - \int_{\Rn} \frac{\mathscr{F} (u(\cdot + y) + u(\cdot - y) - 2u(\cdot))(\xi)}{\ve y \ve^n \varphi(\ve y \ve)} \, dy \\
&= - \int_{\Rn} \frac{e^{i \xi \cdot y} + e^{-i \xi \cdot y} - 2}{\ve y \ve^n \varphi(\ve y \ve)} \, dy  (\mathscr{F}u)(\xi) \\
&= 2\int_{\Rn} \frac{1-\cos(\xi \cdot y)}{\ve y \ve^n \varphi(\ve y \ve)} \, dy (\mathscr{F}u)(\xi).
\end{align*}
Since the function
\begin{align*}
\xi \mapsto \int_{\Rn} \frac{1-\cos(\xi \cdot y)}{\ve y \ve^n \varphi(\ve y \ve)} \, dy
\end{align*}
is rotationally symmetric, we have 
\begin{align} \label{e:symbol}
- \mathscr{F}(L_0 u)(\xi) = 2\int_{\Rn} \frac{1-\cos(\ve \xi \ve y_1)}{\ve y \ve^n \varphi(\ve y \ve)} \, dy (\mathscr{F}u)(\xi).
\end{align}
Note that when $\varphi(r) = r^\sigma$ the integral in \eqref{e:symbol} can be represented as
\begin{align*}
\int_{\Rn} \frac{1-\cos(\ve \xi \ve y_1)}{\ve y \ve^{n+\sigma}} \, dy = \int_{\Rn} \frac{1-\cos y_1}{\ve y / \ve \xi \ve \ve^{n+\sigma}} \frac{dy}{\ve \xi \ve^n} = C(n, \sigma)^{-1} \ve \xi \ve^\sigma,
\end{align*}
and hence the fractional Laplacian is defined with the constant $C(n, \sigma)$ as
\begin{align*}
-(-\Delta)^{\sigma/2} u(x) = \frac{1}{2} C(n, \sigma) \int_{\Rn} \frac{\delta(u, x, y)}{\ve y \ve^{n+\sigma}} \, dy.
\end{align*}
Thus, in the general case, it is natural to define
\begin{align*}
C_\varphi = \left( \int_{\Rn} \frac{1 - \cos y_1}{\ve y \ve^n \varphi(\ve y \ve)} \, dy \right)^{-1}
\end{align*}
as a normalizing constant. Then the operator $\frac{1}{2} C_\varphi L_0$ generalizes the fractional Laplacian $-(-\Delta)^{\sigma/2}$. In Section \ref{s:2 Asymptotics}, we will prove asymptotic properties of the constant $C_\varphi$ and the operator $\frac{1}{2} C_\varphi L_0$.

\subsection{Main Results}

In this paper, we are concerned with the nonlinear integro-differential operator
\begin{align} \label{e:I}
Iu := \inf_{\beta} \sup_{\alpha} L_{\alpha\beta} u, \quad L_{\alpha\beta} \in \mathcal{L}_0,
\end{align}
where $\mathcal{L}_0$ denotes the class of linear integro-differential operators of the form \eqref{e:L symmetric} with symmetric kernels $K$ satisfying \eqref{a:K} and \eqref{a:K comp}.

We define functions $\uC, \oC : (0, \infty) \rightarrow \R$ by
\begin{align*}
\uC (R) := \uC_\varphi (R) := \int_0^R \frac{r}{\varphi(r)} \, dr \quad \text{and} \quad \oC (R) := \oC_\varphi (R) := \int_R^\infty \frac{1}{r\varphi(r)} \, dr.
\end{align*}
They correspond to $\frac{R^{2-\sigma}}{2-\sigma}$ and $\frac{R^{-\sigma}}{\sigma}$ for the case of fractional Laplacian, respectively. We will denote by $\uC = \uC (1)$ and $\oC = \oC (1)$.

Now we present our main results which generalize the uniform regularity results in \cite{CS}. Throughout this paper we denote $B_R := B_R(0)$ for $R > 0$.

\begin{theorem} [Harnack inequality] \label{t:Harnack}
Let $\sigma_0 \in (0,2)$ and assume $\us \geq \sigma_0$. Let $u \in C(B_{2R})$ be a nonnegative function in $\Rn$ such that
\begin{align*}
\mathcal{M}^-_{\mathcal{L}_0} u \leq C_0 \quad \text{and} \quad \mathcal{M}^+_{\mathcal{L}_0} u \geq - C_0 \quad \text{in} ~ B_{2R}
\end{align*}
in the viscosity sense. Then there exists a uniform constant $C > 0$, depending only on $n, \lambda, \Lambda, a$, and $\sigma_0$, such that
\begin{align} \label{e:Harnack}
\sup_{B_R} u \leq C \left( \inf_{B_R} u + C_0 \frac{(\uC + \oC)R^2}{\uC(R)} \right).
\end{align}
\end{theorem}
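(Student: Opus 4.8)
The plan is to follow the classical Krylov--Safonov iteration scheme as adapted to the integro-differential setting by Caffarelli and Silvestre, but with every scaling exponent $2-\sigma$ replaced by the quantities $\uC(R), \oC(R)$, which by the asymptotic analysis of Section~\ref{s:2 Asymptotics} carry the correct scaling information for the variable-order kernels. By a standard scaling and normalization argument one reduces to the case $R=1$: if $u$ solves the two-sided inequalities in $B_2$, then $\tilde u(x) = u(Rx)$ solves the rescaled inequalities with right-hand side controlled in terms of $\uC(R)$ (the natural analogue of $R^{2-\sigma}$), so it suffices to prove $\sup_{B_1} u \le C(\inf_{B_1} u + C_0(\uC + \oC))$ under the hypothesis $\us \ge \sigma_0$; the dependence of constants only on $n,\lambda,\Lambda,a,\sigma_0$ must be tracked throughout, and the role of $\sigma_0$ is to keep the tail term $\oC$ and the barrier constructions uniformly bounded.

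The core of the argument is a point-estimate (the $L^\varepsilon$-type lemma): if $u \ge 0$ in $\Rn$, satisfies $\mathcal{M}^-_{\mathcal{L}_0} u \le C_0$ in $B_2$, and $\inf_{B_{1/4}} u \le 1$, then $|\{u \le M\} \cap B_{1/2}| \ge \mu |B_{1/2}|$ for universal $M>1$ and $\mu \in (0,1)$. I would prove this by building a subsolution barrier: a function that is a classical strict subsolution of $\mathcal{M}^-_{\mathcal{L}_0}$ outside a small ball, negative on $B_{1/2}$ and below $u$ far away, so that by the comparison principle for viscosity solutions (from Section~\ref{s:3 VS}) $u$ cannot be too large on a large portion of $B_{1/2}$. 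Constructing this barrier is where the weak scaling condition \eqref{a:wsc} enters: one takes something like $-c\,|x|^{-p}$ truncated, and must check that $L_{\alpha\beta}$ applied to it has the right sign, using the two-sided bound \eqref{a:K comp} and splitting the kernel integral into the singular region (where the $C^{1,1}$ bound on the barrier and the weak scaling give an estimate comparable to $C_\varphi \uC(\text{scale}) \cdot D^2$) and the tail region (controlled by $C_\varphi \oC$ and boundedness). Then one upgrades the point estimate to a decay-of-distribution estimate $|\{u > M^k\} \cap B_{1/2}| \le (1-\mu)^k|B_{1/2}|$ by a Calderón--Zygmund / growing-ink-spots covering argument — here the variable order is harmless since the cubes are handled at a fixed finite stack of scales and one only needs that the measure-theoretic covering lemma interacts correctly with the dyadic scaling of $\varphi$, which \eqref{a:wsc} guarantees. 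Combining with the supersolution bound $\mathcal{M}^+_{\mathcal{L}_0} u \ge -C_0$ to control $\sup u$ via a standard doubling/measure argument yields the Harnack inequality.

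The main obstacle, and the step that genuinely differs from \cite{CS}, is the barrier construction and the verification that its operator values scale like $\uC(\cdot)$ and $\oC(\cdot)$ rather than like powers of a single $\sigma$. Concretely, for a test function $\psi$ with $\|D^2\psi\|_{L^\infty(B_\rho)} \lesssim \rho^{-2}$ and bounded globally, one needs a two-sided estimate of the form $|L\psi(x)| \lesssim \Lambda C_\varphi(\uC(\rho)\rho^{-2} + \oC(\rho))$ uniformly over $L \in \mathcal{L}_0$; this is exactly the content one extracts from \eqref{a:K comp} together with the definitions of $\uC, \oC$ and the weak scaling bounds, but making the resulting constant depend only on $a$ and $\sigma_0$ (not on the specific $\varphi$, $\us$, $\os$) requires care — in particular one uses $\us \ge \sigma_0$ to bound $\oC = \int_1^\infty (r\varphi(r))^{-1}\,dr$ uniformly and to keep $\uC(\rho)/\uC(1)$ and similar ratios under control as $\rho \to 0$. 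Once this uniform barrier estimate is in hand, the combinatorial machinery is essentially verbatim that of Caffarelli--Silvestre and Krylov--Safonov, with $2-\sigma \leadsto \uC$, $\sigma \leadsto \oC$, and $C(n,\sigma) \leadsto C_\varphi$.
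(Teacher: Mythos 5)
Your architecture is broadly the right one: ABP estimate, a homogeneous-degree-$(-p)$ barrier, a point estimate, power decay of the distribution via a dyadic covering, a weak Harnack inequality, and finally the full Harnack inequality. The dictionary $2-\sigma \leadsto \uC$, $\sigma \leadsto \oC$, $C(n,\sigma) \leadsto C_\varphi$ is exactly what the paper uses, and you correctly identify that the weak scaling condition and the assumption $\us \geq \sigma_0$ are what keep the barrier and tail estimates uniform. That said, there are two concrete gaps.

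First, your opening claim that ``by a standard scaling and normalization argument one reduces to the case $R=1$'' is not correct here, and glossing over it misses one of the genuine difficulties of the variable-order setting. The class $\mathcal{L}_0$ is \emph{not} scale-invariant: if you set $\tilde u(x)=u(Rx)$, the rescaled kernels have the form $C_\varphi\,c\,|z|^{-n}\tilde\varphi(|z|)^{-1}$ with $\tilde\varphi(r)=\varphi(r/R)$, which belongs to the class built from $\tilde\varphi$ only after replacing the normalizing constant $C_\varphi$ by $C_{\tilde\varphi}$. So the ellipticity constants and the right-hand side pick up the factor $C_\varphi/C_{\tilde\varphi}$, which you would then have to control uniformly using Lemma~\ref{l:bound} and the weak scaling inequalities. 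The paper avoids this altogether: Lemma~\ref{l:subsoln} is stated for an arbitrary radius $R$, and the whole barrier/power-decay machinery is carried out ``at every scale.'' This is unavoidable even if you did normalize the outer radius, because inside the Harnack iteration the weak Harnack inequality must be invoked on small balls $B_{\theta r}(x_0)$ with $r$ depending on the touching point, so you need uniform control of the barrier at \emph{all} scales $\rho < R$, not just $\rho = R$.

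Second, the last step — ``combining with the supersolution bound $\mathcal{M}^+_{\mathcal{L}_0} u \ge -C_0$ via a standard doubling/measure argument'' — is far too vague and does not name the mechanism that actually closes the proof. The passage from the weak Harnack inequality to the full Harnack inequality in this nonlocal setting is the Caffarelli--Silvestre argument: one considers the minimal $\alpha$ with $u\le h_\alpha(x)=\alpha(1-|x|/R)^{-\gamma}$, locates a touching point $x_0$, and applies the weak Harnack inequality to $w=\bigl((1-\theta/2)^{-\gamma}u(x_0)-u\bigr)^+$ on a small ball near $x_0$. The obstruction is that $\mathcal{M}^-_{\mathcal{L}_0}w$ picks up the nonlocal tail $\mathcal{M}^+_{\mathcal{L}_0}v^-$, i.e.\ an integral of $\bigl(u(x+y)-(1-\theta/2)^{-\gamma}u(x_0)\bigr)^+$ against the kernel. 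Controlling this tail is where the supersolution inequality $\mathcal{M}^+_{\mathcal{L}_0}u\ge -C_0$ is actually used: one introduces a second, \emph{lower} barrier $g_\beta$ touching $u$ from below at some $x_1\in B_{R/4}$, evaluates the supersolution inequality at $x_1$ to bound $C_\varphi\int\delta^+(u,x_1,y)|y|^{-n}\varphi(|y|)^{-1}dy$ by a multiple of $(\uC(R)R^{-2}+\oC(R))/(\uC+\oC)$, and transports this via a change of variables and the weak scaling condition (costing a factor $(R/\theta r)^{n+2}$, absorbed because $\gamma=(n+2)/\varepsilon$) to control the tail at $x$. This two-barrier, tail-transport step is the genuinely nonlocal heart of the proof and is precisely where most of the $\uC,\oC,C_\varphi$ bookkeeping you gesture at actually takes place; ``standard doubling'' does not describe it.
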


\begin{theorem} [H\"older regularity] \label{t:Holder}
Let $\sigma_0 \in (0,2)$ and assume $\us \geq \sigma_0$. Let $u \in C(B_{2R})$ be a function in $\Rn$ such that
\begin{align*}
\mathcal{M}^-_{\mathcal{L}_0} u \leq C_0 \quad \text{and} \quad \mathcal{M}^+_{\mathcal{L}_0} u \geq - C_0 \quad \text{in} ~ B_{2R}
\end{align*}
in the viscosity sense. Then $u \in C^\alpha(B_R)$ and
\begin{align} \label{e:Holder}
R^\alpha [u]_{C^\alpha(B_R)} \leq C \left( \Ve u \Ve_{L^\infty(\Rn)} + C_0 \frac{(\uC + \oC)R^2}{\uC(R)} \right)
\end{align}
for some uniform constants $\alpha > 0$ and $C > 0$ which depend only on $n, \lambda, \Lambda, a$, and $\sigma_0$.
\end{theorem}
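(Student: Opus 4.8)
The plan is to derive the Hölder estimate of Theorem~\ref{t:Holder} from the Harnack inequality of Theorem~\ref{t:Harnack} by the standard oscillation-decay argument, so the proof proceeds by iterating a single geometric step. First I would reduce to the unit scale: by a dilation $u(x) \mapsto u(Rx)$ the operators in $\mathcal{L}_0$ rescale into operators with kernels comparable to $|y|^{-n}\varphi_R(|y|)^{-1}$ where $\varphi_R(r) = \varphi(Rr)/(\text{const})$; one checks $\varphi_R$ still satisfies the weak scaling condition \eqref{a:wsc} with the \emph{same} constants $a, \us, \os$, so all constants produced below remain uniform, and the right-hand side picks up exactly the factor $(\uC+\oC)R^2/\uC(R)$ after tracking the scaling of $\uC_\varphi, \oC_\varphi$. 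Hence it suffices to prove: if $\mathcal{M}^-_{\mathcal{L}_0} u \le C_0$ and $\mathcal{M}^+_{\mathcal{L}_0} u \ge -C_0$ in $B_2$ and $\|u\|_{L^\infty(\Rn)} \le 1$ (after normalizing), then $[u]_{C^\alpha(B_{1})} \le C$ for some $\alpha, C$ depending only on $n, \lambda, \Lambda, a, \sigma_0$.

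The core is the oscillation-reduction lemma: there exist $\mu \in (0,1)$ and $\alpha > 0$, uniform, such that for all $k \ge 0$ one can find constants $m_k \le M_k$ with $M_k - m_k = 2^{-\alpha k}$, $m_k \le u \le M_k$ on $B_{2^{-k}}$, and the sequences monotone. I would prove this by induction on $k$. Given the $k$-th step, consider $v := u - m_k$ on $B_{2^{-k}}$ (or $M_k - u$, whichever has its "half" controlled), which is nonnegative on $B_{2^{-k}}$ but possibly not on all of $\Rn$; split $v = v^+ + v^-$ where $v^+ = \max(v,0)$ is nonnegative everywhere and $v^-$ is supported outside $B_{2^{-k}}$. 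Applying Theorem~\ref{t:Harnack} to $v^+$ (rescaled to $B_2$) gives $\sup_{B_{2^{-k-1}}} v^+ \le C(\inf_{B_{2^{-k-1}}} v^+ + \text{tail} + C_0 \cdot(\text{scaled factor}))$. The extra term from $v^-$ enters as a controlled right-hand side: because of the earlier oscillation steps, the "bad" contribution of $v^-$ from the annulus $B_{2^{-j}}\setminus B_{2^{-j-1}}$ is bounded by $C 2^{-\alpha(k-j)}$ times the tail weight of the kernel on that annulus, and the kernel bound \eqref{a:K comp} together with the weak scaling \eqref{a:wsc} makes $\sum_j$ of these converge and be small provided $\alpha$ is chosen small enough relative to $\us \ge \sigma_0$; this is exactly where the hypothesis $\us \ge \sigma_0 > 0$ is used, to keep the tail sum geometric with ratio bounded away from $1$ uniformly. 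Then either $u$ is close to $m_k$ on a definite portion of $B_{2^{-k-1}}$ or close to $M_k$, and in either case the Harnack output lets us pick $m_{k+1}, M_{k+1}$ with $M_{k+1}-m_{k+1} \le (1-\theta)(M_k - m_k)$ for uniform $\theta$; choosing $\alpha$ with $2^{-\alpha} \ge 1-\theta$ closes the induction.

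Finally, the pointwise oscillation decay $\osc_{B_r} u \le C r^\alpha$ for $r \le 1$ translated to the scaling-covariant form gives $[u]_{C^\alpha(B_1)} \le C\|u\|_{L^\infty(\Rn)} + C C_0 (\uC+\oC)$, and undoing the initial dilation yields \eqref{e:Holder}. I expect the main obstacle to be the bookkeeping of the tail term: one must show that the contribution of the far-away values of $u$ (which are only controlled in $L^\infty$, not by the oscillation decay) is absorbable into the right-hand side uniformly in $\sigma \in [\sigma_0, 2)$ — equivalently, that $C_\varphi \int_{|y|\ge 1} |y|^{-n}\varphi(|y|)^{-1}\,dy$ and the related annular sums stay bounded and the geometric series for the $v^-$-correction has ratio $< 1$ uniformly. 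This requires using the asymptotics of $C_\varphi$ and of $\uC_\varphi, \oC_\varphi$ from Section~\ref{s:2 Asymptotics} together with \eqref{a:wsc}; modulo that input, which the excerpt lets me assume, the argument is the classical Krylov–Safonov/Caffarelli–Silvestre scheme.
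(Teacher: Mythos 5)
Your proposal is essentially correct and follows the same Krylov--Safonov oscillation-decay scheme the paper uses: reduce to unit scale by dilation, construct by induction sequences $m_k \le u \le M_k$ on shrinking balls with $M_k - m_k$ decaying geometrically, take the positive part of $u-m_k$ (or $M_k - u$), bound the contribution of its negative tail outside the current ball via the induction hypothesis and the weak scaling condition, and choose $\alpha$ small relative to $\sigma_0$ so that the resulting tail series is summable with a uniformly small bound. The one citation that is off is that you invoke the full Harnack inequality (Theorem~\ref{t:Harnack}), whereas the paper's proof (and what the measure-based half-and-half alternative you describe actually requires) uses the weak Harnack inequality, Theorem~\ref{t:weakHI}; the full Harnack version can also be made to work (pick a point where $v^+\geq 1$ and compare $\sup$ to $\inf$), but only the one-sided weak Harnack hypothesis is needed, and it gives exactly the measure-theoretic conclusion your inductive step uses. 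Minor cosmetic differences (you use radii $2^{-k}$, the paper uses $4^{-k}$; the sign convention $v = v^+ - v^-$ rather than $v^+ + v^-$) do not affect the argument.
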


It is important to note that in the regularity estimates \eqref{e:Harnack} and \eqref{e:Holder} the constants are independent of $\us$ and $\os$, but the term $\frac{(\uC + \oC)R^2}{\uC(R)}$ in the right-hand side of \eqref{e:Harnack} and \eqref{e:Holder} still depends on $\us$ and $\os$. For the fractional Laplacian case this term corresponds to $\frac{2}{\sigma} R^\sigma$ and it can be further estimated as
\begin{align*}
\frac{2}{\sigma} R^\sigma \leq \frac{2}{\sigma_0} R^\sigma.
\end{align*}
In our case, we can also estimate the term $\frac{(\uC + \oC)R^2}{\uC(R)}$ using Lemma \ref{l:bound} and Lemma \ref{l:wsc} as
\begin{align} \label{e:further est}
\frac{(\uC + \oC)R^2}{\uC(R)}
&\leq C(n, a) \frac{(\uC(R) + \oC(R))R^2}{\uC(R)} \leq C(n, a) \left( R^2 + \frac{2a^2}{\sigma_0} \right),
\end{align}
which is independent of $\us$ and $\os$. Notice that it has the same blow up rate with the fractional Laplacian case. Nevertheless, we leave \eqref{e:Harnack} and \eqref{e:Holder} as they are because the estimate \eqref{e:further est} has a different scale with respect to $R$.

This paper is organized as follows. In Section \ref{s:2 Asymptotics} we study asymptotic properties of the constant $C_\varphi$ and the operator $\frac{1}{2}C_\varphi L_0$, which play crucial roles in the forthcoming regularity results. Some bounds for the constant $C_\varphi$ are also given in this section. Section \ref{s:3 VS} is devoted to the definitions of viscosity solutions and the notion of ellipticity for nonlinear integro-differential operators. In Section \ref{s:4.1 ABP} we prove the ABP estimates, which is the main ingredient in the proof of Harnack inequality. We construct a barrier function in Section \ref{s:4.2 Barrier} and then use this function and ABP estimates to provide the measure estimates of super-level sets of the viscosity subsolutions to elliptic integro-differential equations in Section \ref{s:4.3 decay est}. We establish the Harnack inequality and H\"older estimates of viscosity solutions to elliptic integro-differential equations in Section \ref{s:4.4 Harnack} and \ref{s:4.5 Holder}.

\section{Asymptotics of the Constant \texorpdfstring{$C_\varphi$}{}} \label{s:2 Asymptotics}

It is well-known that the constant $C(n, \sigma)$ for the fractional Laplacian has the following asymptotic properties:
\begin{align} \label{e:asymp FL}
\lim_{\sigma \rightarrow 2^-} \frac{C(n, \sigma)}{2-\sigma} = \frac{2}{\omega_n} \quad \text{and} \quad \lim_{\sigma \rightarrow 0^+} \frac{C(n, \sigma)}{\sigma} = \frac{1}{n\omega_n},
\end{align}
where $\omega_n$ denotes the volume of the $n$-dimensional unit ball, and that the fractional Laplacian $(-\Delta)^{\sigma/2}$ has the following properties:
\begin{align} \label{e:limit FL}
\lim_{\sigma \rightarrow 2^-} (-\Delta)^{\sigma/2} u = -\Delta u \quad \text{and} \quad \lim_{\sigma \rightarrow 0^+} (-\Delta)^{\sigma/2} u = u.
\end{align}
See \cite{DNPV} for the proofs. In this section, we prove the analogues of \eqref{e:asymp FL} and \eqref{e:limit FL}, which will imply that the constant $C_\varphi$ generalizes the constant of the fractional Laplacian $C(n, \sigma)$.

To state the analogues of \eqref{e:asymp FL} and \eqref{e:limit FL}, we must consider a sequence of operators
\begin{align*}
L_k u(x) = \frac{1}{2} C_{\varphi_k} \int_{\Rn} \frac{\delta(u, x, y)}{\ve y \ve^n \varphi_k(\ve y \ve)} \, dy,
\end{align*}
where functions $\varphi_k : (0, \infty) \rightarrow (0, \infty)$ satisfy weak scaling conditions \eqref{a:wsc} with constants $a_k \geq 1$ and $0 < \us_k \leq \os_k < 2$. We will assume that
\begin{align} \label{a:a_k}
\lim_{k \rightarrow \infty} a_k = 1
\end{align}
throughout this section. The following lemma and proposition correspond to \eqref{e:asymp FL} and \eqref{e:limit FL}, respectively. Recall that $\uC(R) = \frac{R^{2-\sigma}}{2-\sigma}$ and $\oC(R) = \frac{R^{-\sigma}}{\sigma}$ in the fractional Laplacian case.
\begin{lemma} \label{l:asymp}
Assume that \eqref{a:a_k} holds. If $\displaystyle \lim_{k \rightarrow \infty} \us_k = \lim_{k \rightarrow \infty} \os_k = 2$, then
\begin{align} \label{e:asymp2}
\lim_{k \rightarrow \infty} C_{\varphi_k} \uC_{\varphi_k} (R) = \frac{2}{\omega_n},
\end{align}
and if $\displaystyle \lim_{k \rightarrow \infty} \us_k = \lim_{k \rightarrow \infty} \os_k = 0$, then
\begin{align} \label{e:asymp0}
\lim_{k \rightarrow \infty} C_{\varphi_k} \oC_{\varphi_k} (R) = \frac{1}{n \omega_n}.
\end{align}
\end{lemma}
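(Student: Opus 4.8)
The plan is to reduce the statement to the known asymptotics \eqref{e:asymp FL} of the fractional-Laplacian constant $C(n,\sigma)$ by sandwiching $\varphi_k$ between two pure powers, using the weak scaling condition \eqref{a:wsc}. Concretely, fix $R$ (say $R=1$ first; the general $R$ will follow by the same scaling computation used to define $C(n,\sigma)$, or by a change of variables in the integrals defining $C_\varphi$, $\uC$, $\oC$). From \eqref{a:wsc} with the reference point $r=1$ we get, for $0<|y|\le 1$,
\begin{align*}
a_k^{-1}\, \varphi_k(1)\, |y|^{\os_k} \le \varphi_k(|y|) \le a_k\, \varphi_k(1)\, |y|^{\us_k},
\end{align*}
and the reverse-type bounds for $|y|\ge 1$. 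Without loss of generality normalize $\varphi_k(1)=1$ (all three quantities $C_{\varphi_k}$, $\uC_{\varphi_k}(R)$, $\oC_{\varphi_k}(R)$ are invariant under multiplying $\varphi_k$ by a positive constant, since $C_\varphi$ scales like $\varphi(1)$ and $\uC,\oC$ like $1/\varphi(1)$). Then $C_{\varphi_k}$ is trapped between positive multiples (powers of $a_k$) of the two quantities $\big(\int_{B_1}\frac{1-\cos y_1}{|y|^{n+\os_k}}dy + \int_{B_1^c}\frac{1-\cos y_1}{|y|^{n+\us_k}}dy\big)^{-1}$ and the analogous one with $\us_k,\os_k$ swapped, and similarly $\uC_{\varphi_k}(1)$ and $\oC_{\varphi_k}(1)$ are trapped between the corresponding integrals of pure powers.

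For the first limit, when $\us_k,\os_k\to 2$: the dominant contribution to both $C_{\varphi_k}^{-1}$ and to $\uC_{\varphi_k}(1)=\int_0^1 r/\varphi_k(r)\,dr$ comes from $|y|$ near (but below) $1$, where $1-\cos y_1\approx \tfrac12 y_1^2$ and where the weak scaling pins $\varphi_k(|y|)$ between $|y|^{\os_k}$ and $|y|^{\us_k}$, both of which converge to $|y|^2$. One shows $C_{\varphi_k}^{-1}\sim \tfrac{1}{2-\os_k}\cdot c$ and $\uC_{\varphi_k}(1)\sim \tfrac{1}{2-\os_k}\cdot c'$ (with the outer-region contributions $\int_{B_1^c}$ and $\int_1^\infty \frac{dr}{r\varphi_k(r)}$ staying bounded because $\us_k$ stays below $2$ — wait, $\us_k\to 2$, so one must check the tail $\oC_{\varphi_k}(1)\le \int_1^\infty r^{\us_k-1-2}dr$-type bound, i.e. $\tfrac{1}{2-\us_k}$, which blows up but is lower order than needed since it appears only through $C_\varphi^{-1}$ together with the much larger near-unit contribution — actually the near-unit integral $\int_0^1 r^{1-\os_k}dr=\tfrac{1}{2-\os_k}$ and the tail $\tfrac{1}{2-\us_k}$ are comparable, so one needs the constants; this is where \eqref{a:a_k} enters decisively). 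The cleanest route is: by the sandwich, $C_{\varphi_k}\uC_{\varphi_k}(1)$ lies between $a_k^{-2}\,C(n,\os_k)\,\frac{1}{2-\os_k}\cdot(\text{exact power integrals})$-type expressions — more precisely between $a_k^{-2}$ times $\big(\!\int_{B_1}\frac{1-\cos y_1}{|y|^{n+\os_k}} + \!\int_{B_1^c}\frac{1-\cos y_1}{|y|^{n+\us_k}}\big)^{-1}\!\!\int_0^1\! r^{1-\us_k}dr$ and $a_k^{2}$ times the mirror expression. Since $\us_k,\os_k\to 2$ and $a_k\to 1$, and since for pure powers one has the explicit limit $C(n,\sigma)\uC_{r^\sigma}(1)=\frac{C(n,\sigma)}{2-\sigma}\to\frac{2}{\omega_n}$ from \eqref{e:asymp FL}, both ends of the sandwich converge to $\frac{2}{\omega_n}$, provided one also checks that splitting the normalizing integral at $|y|=1$ and using two different exponents changes it only by a factor $\to 1$. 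That last point follows because the mismatched tail $\int_{B_1^c}\frac{1-\cos y_1}{|y|^{n+\us_k}}dy$ is $O\!\big(\tfrac{1}{\us_k}\big)=O(1)$ while $\int_{B_1}\frac{1-\cos y_1}{|y|^{n+\os_k}}dy\asymp \tfrac{1}{2-\os_k}\to\infty$, so the tail is asymptotically negligible in $C_{\varphi_k}^{-1}$.

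For the second limit, when $\us_k,\os_k\to 0$: now the relevant divergence is at infinity. The quantity $\oC_{\varphi_k}(1)=\int_1^\infty \frac{dr}{r\varphi_k(r)}$ is sandwiched between $a_k^{-1}\int_1^\infty r^{-1-\us_k}dr = \tfrac{a_k^{-1}}{\us_k}$ and $a_k\int_1^\infty r^{-1-\os_k}dr=\tfrac{a_k}{\os_k}$, and $C_{\varphi_k}^{-1}=\int_{\R^n}\frac{1-\cos y_1}{|y|^n\varphi_k(|y|)}dy$ is dominated by its tail $\int_{B_1^c}\frac{1-\cos y_1}{|y|^n\varphi_k(|y|)}dy$, which by the same sandwich is comparable to $\int_{B_1^c}\frac{1-\cos y_1}{|y|^{n+\sigma}}dy \asymp \tfrac{1}{\sigma}$ for the relevant small exponents, while the near-origin part $\int_{B_1}\frac{1-\cos y_1}{|y|^n\varphi_k(|y|)}dy$ stays bounded (integrable since $1-\cos y_1\lesssim|y|^2$ and $\varphi_k(|y|)\gtrsim |y|^{\os_k}$ with $\os_k<2$). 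Then $C_{\varphi_k}\oC_{\varphi_k}(1)$ is squeezed between $a_k^{\mp 1}$ times ratios of pure-power integrals that, by \eqref{e:asymp FL} (which gives $\frac{C(n,\sigma)}{\sigma}\to\frac{1}{n\omega_n}$, and note $\oC_{r^\sigma}(1)=\tfrac1\sigma$), converge to $\frac{1}{n\omega_n}$; since $a_k\to1$ the squeeze closes. The general $R$ is handled at the end by the change of variables $y\mapsto y/R$ (exactly as in the computation $\int_{\R^n}\frac{1-\cos(|\xi|y_1)}{|y|^{n+\sigma}}dy = C(n,\sigma)^{-1}|\xi|^\sigma$ shown in the introduction), which turns $C_{\varphi_k}\uC_{\varphi_k}(R)$ into $C_{\varphi_k^R}\uC_{\varphi_k^R}(1)$ for the rescaled $\varphi_k^R(r)=\varphi_k(Rr)/\varphi_k(R)$, still satisfying \eqref{a:wsc} with the same $a_k,\us_k,\os_k$.

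The main obstacle is the bookkeeping around the mismatched exponents: when we split $C_{\varphi_k}^{-1}=\int_{B_1}+\int_{B_1^c}$ and bound each piece by a \emph{different} pure power ($\os_k$ inside, $\us_k$ outside, for the upper bound on $\varphi_k$), the resulting expression is \emph{not} exactly $C(n,\sigma)^{-1}$ for a single $\sigma$, so one cannot invoke \eqref{e:asymp FL} verbatim. The resolution is to show that in each regime one of the two pieces dominates asymptotically (the near-origin piece when $\us_k,\os_k\to 2$; the tail piece when $\us_k,\os_k\to 0$) and that the subdominant, mismatched piece contributes a relative error $\to 0$; combined with $a_k\to 1$ this forces both ends of the sandwich to the same limit. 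Everything else — the explicit pure-power integrals $\int_0^1 r^{1-\sigma}dr$, $\int_1^\infty r^{-1-\sigma}dr$, and the relation between $\int_{B_1}\frac{1-\cos y_1}{|y|^{n+\sigma}}dy$ and $C(n,\sigma)$ — is routine once the dominance is established, and I would cite \cite{DNPV} for the underlying facts about $C(n,\sigma)$.
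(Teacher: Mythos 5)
Your sandwiching strategy has a genuine gap that the paper's argument is specifically designed to avoid. When you trap $\varphi_k(r)$ between $a_k^{-1}\varphi_k(1)r^{\os_k}$ and $a_k\varphi_k(1)r^{\us_k}$ on $(0,1]$, the two ends of the resulting squeeze for $C_{\varphi_k}\uC_{\varphi_k}(1)$ involve the \emph{dominant} near-origin integral with \emph{different} exponents. Writing $I(\sigma)=\int_{B_1}\frac{1-\cos y_1}{\ve y\ve^{n+\sigma}}\,dy\sim\frac{\omega_n}{2(2-\sigma)}$, your lower end behaves like $a_k^{-2}\,\frac{1/(2-\us_k)}{I(\os_k)}\sim\frac{2a_k^{-2}}{\omega_n}\cdot\frac{2-\os_k}{2-\us_k}$ and your upper end like $\frac{2a_k^{2}}{\omega_n}\cdot\frac{2-\us_k}{2-\os_k}$. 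These converge to the common value $\frac{2}{\omega_n}$ only if $\frac{2-\us_k}{2-\os_k}\to 1$, and that is \emph{not} a consequence of $\us_k\to2$, $\os_k\to2$, $a_k\to1$: with $a_k\equiv 1$ one may take $\varphi_k(r)=r^{2-1/k}$ for $r\le1$ and $\varphi_k(r)=r^{2-1/k^{2}}$ for $r\ge1$, so that $\us_k=2-1/k$, $\os_k=2-1/k^{2}$ and $\frac{2-\us_k}{2-\os_k}=k\to\infty$. There your lower end tends to $0$ and your upper end to $+\infty$, so the squeeze does not close; yet the lemma is still true for this $\varphi_k$ (a direct computation gives $\uC_{\varphi_k}(1)=k$ and $C_{\varphi_k}^{-1}=\tfrac{\omega_n}{2}k+O(1)$, so the product $\to\frac{2}{\omega_n}$). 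The same mismatch ($\us_k/\os_k\not\to1$) kills the squeeze in the $\to0$ case. Your ``resolution'' paragraph only addresses the sub\-dominant, mismatched \emph{tail} piece; the real problem is that the two dominant pieces in the upper and lower bounds already diverge at different rates.

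The structural reason is that replacing $\varphi_k$ by a pure power throws away an exact cancellation between $C_{\varphi_k}^{-1}$ and $\uC_{\varphi_k}$ (resp.\ $\oC_{\varphi_k}$). The paper first reduces $C_\varphi^{-1}$ to the one-dimensional form $\int_{\R^{n-1}}\big(\int_{\R}\frac{1-\cos(r/\zeta)}{\ve r\ve\varphi(\ve r\ve)}\,dr\big)\frac{dy'}{\zeta^n}$ and then normalizes the inner integral by $\uC_{\varphi_k}(R)$ \emph{before} taking the limit: the quadratic main term $\int_{\ve r\ve<R}\frac{r^2/(2\zeta^2)}{\ve r\ve\varphi_k(\ve r\ve)}\,dr$ equals $\frac{1}{\zeta^2}\uC_{\varphi_k}(R)$ \emph{identically}, so the ratio is $\frac{1}{\zeta^2}$ with no pure-power approximation at all; the weak scaling and $a_k\to1$ are used only to show the tail $\ve r\ve\ge R$ and the $O(r^4)$ remainder are $o(\uC_{\varphi_k}(R))$, and in the $\to0$ case to kill the oscillating $\cos(r/\zeta)$ tail via the half-period pairing in \eqref{e:int A}--\eqref{e:>R} — a delicate step your proposal has no analogue of (for pure powers the explicit formula for $C(n,\sigma)$ hides it). Finally, your reduction from general $R$ to $R=1$ via $\varphi_k^R(r)=\varphi_k(Rr)/\varphi_k(R)$ is not an identity: the change of variables turns $1-\cos y_1$ into $1-\cos(y_1/R)$, which is not the integrand defining $C_{\varphi_k}$, so the general-$R$ statement has to be argued directly, as the paper does.
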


\begin{proposition} \label{p:lim}
Assume that \eqref{a:a_k} holds, and let $u \in C_c^\infty (\Rn)$. If $\displaystyle \lim_{k \rightarrow \infty} \us_k = \lim_{k \rightarrow \infty} \os_k = 2$, then
\begin{align*}
\lim_{k \rightarrow \infty} -L_k u = -\Delta u,
\end{align*}
and if $\displaystyle \lim_{k \rightarrow \infty} \us_k = \lim_{k \rightarrow \infty} \os_k = 0$, then
\begin{align*}
\lim_{k \rightarrow \infty} -L_k u = u.
\end{align*}
\end{proposition}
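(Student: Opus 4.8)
The plan is to pass to the Fourier side, where each $L_k$ is a radial Fourier multiplier, and then extract the limit from dominated convergence; this treats both regimes simultaneously and does not even use Lemma \ref{l:asymp}.

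First I would record the multiplier. For $u \in C_c^\infty(\Rn)$ the integral defining $L_k u(x)$ converges, and $-L_k u$ is bounded and continuous, since
\begin{align*}
|L_k u(x)| \le \frac{1}{2} C_{\varphi_k}\left( \|D^2 u\|_{L^\infty(\Rn)} \int_{B_1} \frac{|y|^2}{|y|^n\varphi_k(|y|)}\,dy + 4\|u\|_{L^\infty(\Rn)}\int_{\Rn\setminus B_1}\frac{dy}{|y|^n\varphi_k(|y|)}\right),
\end{align*}
and each integral on the right is finite because $0 < \us_k \le \os_k < 2$. The computation preceding \eqref{e:symbol}, carried out for $\varphi_k$ and justified by Fubini's theorem (using $u, D^2 u \in L^1(\Rn)$), gives
\begin{align*}
\mathscr{F}(-L_k u)(\xi) = m_k(|\xi|)\,\mathscr{F}u(\xi), \qquad m_k(\rho) := C_{\varphi_k}\int_{\Rn}\frac{1-\cos(\rho y_1)}{|y|^n\varphi_k(|y|)}\,dy,
\end{align*}
where the normalization of $C_{\varphi_k}$ yields $m_k(1) = 1$.

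Next I would prove the scale-invariant bound
\begin{align*}
a_k^{-1}\min\{\rho^{\us_k},\rho^{\os_k}\} \le m_k(\rho) \le a_k\max\{\rho^{\us_k},\rho^{\os_k}\} \qquad \text{for all } \rho > 0.
\end{align*}
After the substitution $z = \rho y$ one has $m_k(\rho) = C_{\varphi_k}\int_{\Rn}\frac{1-\cos z_1}{|z|^n\varphi_k(|z|/\rho)}\,dz$, and comparing $\varphi_k(|z|/\rho)$ with $\varphi_k(|z|)$ by \eqref{a:wsc} — separately for $\rho \ge 1$ and $\rho < 1$ — and then using $C_{\varphi_k}\int_{\Rn}(1-\cos z_1)|z|^{-n}\varphi_k(|z|)^{-1}\,dz = m_k(1) = 1$ gives the displayed inequalities. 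Since $a_k \to 1$ by \eqref{a:a_k}, this forces the pointwise limits: $m_k(|\xi|) \to |\xi|^2$ for every $\xi$ when $\us_k, \os_k \to 2$, and $m_k(|\xi|) \to 1$ for every $\xi$ when $\us_k, \os_k \to 0$. Moreover $\us_k \le \os_k < 2$ and $\sup_k a_k < \infty$ give $m_k(|\xi|) \le C(1 + |\xi|^2)$ with $C$ independent of $k$; since $\mathscr{F}u$ is Schwartz, $(1+|\xi|^2)|\mathscr{F}u| \in L^1(\Rn)$.

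Finally I would conclude by Fourier inversion. As $-L_k u$ is a tempered distribution whose Fourier transform $m_k(|\cdot|)\mathscr{F}u$ lies in $L^1(\Rn)$, injectivity of $\mathscr{F}$ on tempered distributions identifies $-L_k u$ with the bounded continuous function $\mathscr{F}^{-1}(m_k(|\cdot|)\mathscr{F}u)$, which is an integral of $m_k(|\xi|)\mathscr{F}u(\xi)$ against a kernel of modulus one. Dominated convergence, using the pointwise limit and the uniform bound from the previous step, then yields $-L_k u \to \mathscr{F}^{-1}(|\xi|^2\mathscr{F}u) = -\Delta u$ uniformly in $x$ in the first case, and $-L_k u \to \mathscr{F}^{-1}(\mathscr{F}u) = u$ uniformly in $x$ in the second. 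The only delicate point is this last invocation of Fourier inversion for the possibly non-integrable function $-L_k u$, which is why the argument passes through tempered distributions rather than the classical inversion theorem; the Fubini step and the scale-invariant multiplier bound are otherwise routine consequences of \eqref{a:wsc}. A real-variable proof is also possible — split $\int_{\Rn} = \int_{B_1} + \int_{\Rn\setminus B_1}$, Taylor-expand $\delta(u,x,y)$ on $B_1$ to extract the term $\Delta u(x)\,\omega_n\uC_{\varphi_k}(1)$, and use Lemma \ref{l:asymp} together with weak-scaling comparisons of $\uC_{\varphi_k}(1)$ and $\oC_{\varphi_k}(1)$ to control the remainder — but it is longer and handled case by case.
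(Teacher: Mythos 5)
Your proof is correct, and it takes a genuinely different route from the paper's. The paper works entirely in physical space: it splits the integral into a near region and a far region (radius $1$ when $\us_k,\os_k\to 2$, radius $R$ chosen so that $\mathrm{supp}\,u\subset B_{R_0}$ and $|x|<R-R_0$ when $\us_k,\os_k\to 0$), Taylor-expands $\delta(u,x,y)$ on the near region to isolate the term $\omega_n\Delta u(x)\,\uC_{\varphi_k}(1)$ (respectively reads off $2n\omega_n\oC_{\varphi_k}(R)\,u(x)$ on the far region), and then uses Lemma~\ref{l:asymp} together with the weak-scaling inequalities of Lemma~\ref{l:wsc} to kill the error and identify the surviving constant as $1$. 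You instead observe that $-L_k$ is the radial Fourier multiplier $m_k(|\xi|)$ with $m_k(1)=1$, derive the two-sided scale-invariant bound $a_k^{-1}\min\{\rho^{\us_k},\rho^{\os_k}\}\le m_k(\rho)\le a_k\max\{\rho^{\us_k},\rho^{\os_k}\}$ from the substitution $z=\rho y$ and \eqref{a:wsc}, and conclude by dominated convergence and Fourier inversion. Your argument treats both asymptotic regimes in a single stroke, does not use Lemma~\ref{l:asymp} at all, and delivers uniform convergence in $x$; the cost is passing through tempered distributions and Fourier inversion rather than staying with the kernel estimates that the rest of the paper relies on. One small point worth spelling out, since you invoke Fubini to obtain the multiplier identity, is that
\begin{align*}
\int_{\Rn}\int_{\Rn}\frac{|\delta(u,x,y)|}{|y|^n\varphi_k(|y|)}\,dx\,dy<\infty,
\end{align*}
which follows from $\int_{\Rn}|\delta(u,x,y)|\,dx\le\min\{|y|^2\,\Ve D^2u\Ve_{L^1},\,4\Ve u\Ve_{L^1}\}$ together with $\uC_{\varphi_k}(1)+\oC_{\varphi_k}(1)<\infty$; your stated $L^\infty$ bound on $L_k u$ does not by itself justify the interchange. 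With that made explicit, everything else in the proposal is sound.
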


The following estimates for the functions $\uC_\varphi(R)$ and $\oC_\varphi(R)$ will be used frequently in the sequel.

\begin{lemma} \label{l:wsc}
It holds that
\begin{align} \label{e:wsc 0}
\frac{1}{a(2-\us)} \frac{R^2}{\varphi(R)} \leq \uC_{\varphi} (R) \leq \frac{a}{2-\os} \frac{R^2}{\varphi(R)},
\end{align}
and that
\begin{align} \label{e:wsc infty}
\frac{1}{a\os} \frac{1}{\varphi(R)} \leq \oC_{\varphi} (R) \leq \frac{a}{\us} \frac{1}{\varphi(R)}.
\end{align}
Moreover, for $t \in (0,1)$ it holds that
\begin{align} \label{e:wsc ratio}
\frac{\uC_\varphi(R)}{\uC_\varphi(tR)} \leq 1+ a^2 t^{-2+\us}.
\end{align}
\end{lemma}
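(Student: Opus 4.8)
The plan is to prove Lemma~\ref{l:wsc} in three independent parts, each one a direct computation leveraging the weak scaling condition \eqref{a:wsc}. The key observation is that \eqref{a:wsc} lets us compare $\varphi(r)$ with $\varphi(R)$ for $r \leq R$ from both sides, turning the defining integrals of $\uC_\varphi$ and $\oC_\varphi$ into explicit power-law integrals that can be evaluated exactly.

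\emph{Estimates for $\uC_\varphi(R)$.} Here we integrate over $0 < r \leq R$, so $\varphi(r) \leq \varphi(R)$ up to the scaling constants. Applying \eqref{a:wsc} with the roles $r \leq R$ gives $a^{-1}(r/R)^{\os}\varphi(R) \leq \varphi(r) \leq a (r/R)^{\us}\varphi(R)$. Wait --- one must be careful with the direction: for $r \le R$, \eqref{a:wsc} reads $a^{-1}(R/r)^{\us} \le \varphi(R)/\varphi(r) \le a(R/r)^{\os}$, i.e. $a^{-1}(r/R)^{\os}\varphi(R) \le \varphi(r) \le a (r/R)^{\us}\varphi(R)$. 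Substituting the lower bound for $\varphi(r)$ into $\int_0^R r/\varphi(r)\,dr$ yields the upper bound $\uC_\varphi(R) \le a\frac{R^2}{\varphi(R)}\int_0^1 s^{1-\us}\,ds = \frac{a}{2-\us}\cdot\frac{R^2}{\varphi(R)}$; hmm, this gives $2-\us$ in the denominator, but the claimed upper bound has $2-\os$. So in fact we must substitute the \emph{upper} bound $\varphi(r) \le a(r/R)^{\us}\varphi(R)$ into the denominator to get a \emph{lower} bound on the integrand, and the \emph{lower} bound $\varphi(r)\ge a^{-1}(r/R)^{\os}\varphi(R)$ to get an \emph{upper} bound on the integrand. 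Thus $\uC_\varphi(R)\le \int_0^R \frac{r}{a^{-1}(r/R)^{\os}\varphi(R)}\,dr = \frac{a}{\varphi(R)}\int_0^R r\,(R/r)^{\os}\,dr = \frac{a R^2}{\varphi(R)}\int_0^1 s^{1-\os}\,ds = \frac{a}{2-\os}\frac{R^2}{\varphi(R)}$, using $\os<2$ for convergence. Symmetrically, $\uC_\varphi(R)\ge \frac{1}{a\varphi(R)}\int_0^R r\,(R/r)^{\us}\,dr = \frac{1}{a(2-\us)}\frac{R^2}{\varphi(R)}$. This establishes \eqref{e:wsc 0}.

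\emph{Estimates for $\oC_\varphi(R)$.} Here the integration variable $r$ ranges over $[R,\infty)$, so now $R \le r$ and \eqref{a:wsc} gives $a^{-1}(r/R)^{\us}\varphi(R) \le \varphi(r) \le a (r/R)^{\os}\varphi(R)$. As before, to bound $\oC_\varphi(R) = \int_R^\infty \frac{dr}{r\varphi(r)}$ from above we use the lower bound on $\varphi(r)$: $\oC_\varphi(R)\le \frac{a}{\varphi(R)}\int_R^\infty \frac{(R/r)^{\us}}{r}\,dr = \frac{a}{\varphi(R)}R^{\us}\int_R^\infty r^{-1-\us}\,dr = \frac{a}{\us}\frac{1}{\varphi(R)}$, using $\us>0$ for convergence; and from below using the upper bound on $\varphi(r)$: $\oC_\varphi(R)\ge \frac{1}{a\varphi(R)}\int_R^\infty \frac{(R/r)^{\os}}{r}\,dr = \frac{1}{a\os}\frac{1}{\varphi(R)}$. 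This gives \eqref{e:wsc infty}.

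\emph{The ratio estimate \eqref{e:wsc ratio}.} For $t\in(0,1)$, write $\uC_\varphi(R) = \int_0^{tR}\frac{r}{\varphi(r)}\,dr + \int_{tR}^R \frac{r}{\varphi(r)}\,dr = \uC_\varphi(tR) + \int_{tR}^R\frac{r}{\varphi(r)}\,dr$. It remains to bound the second piece by $a^2 t^{-2+\us}\,\uC_\varphi(tR)$. On $[tR,R]$ we have $\varphi(r)\ge \varphi(tR)$ up to scaling: from \eqref{a:wsc}, $\varphi(r)\ge a^{-1}(r/(tR))^{\us}\varphi(tR)\ge a^{-1}\varphi(tR)$ since $r\ge tR$ and $\us>0$. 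Hence $\int_{tR}^R \frac{r}{\varphi(r)}\,dr \le \frac{a}{\varphi(tR)}\int_{tR}^R r\,dr \le \frac{a}{\varphi(tR)}\cdot\frac{R^2}{2}$. On the other hand, by the lower bound in \eqref{e:wsc 0} applied at $tR$, $\uC_\varphi(tR)\ge \frac{1}{a(2-\us)}\frac{(tR)^2}{\varphi(tR)}$, so $\frac{1}{\varphi(tR)}\le a(2-\us)\frac{\uC_\varphi(tR)}{(tR)^2}$. Combining, $\int_{tR}^R\frac{r}{\varphi(r)}\,dr \le \frac{a^2(2-\us)}{2}\,t^{-2}\,\uC_\varphi(tR)\le a^2 t^{-2}\uC_\varphi(tR)$, which is even slightly stronger than claimed (one can also carry the $t^{\us}$ factor: replacing $\varphi(r)\ge a^{-1}\varphi(tR)$ by the sharper $\varphi(r)\ge a^{-1}(r/(tR))^{\us}\varphi(tR)$ and estimating $\int_{tR}^R r(r/(tR))^{-\us}\,dr\le (tR)^{\us}\int_0^R r^{1-\us}\,dr = (tR)^{\us}\frac{R^{2-\us}}{2-\us}$ yields the factor $a^2 t^{\us}\cdot t^{-2} = a^2 t^{-2+\us}$ after dividing by $(tR)^2/((2-\us)\varphi(tR))$ — this is the form stated). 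Dividing through by $\uC_\varphi(tR)$ gives \eqref{e:wsc ratio}.

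The only subtlety — and the one place to be careful — is consistently tracking \emph{which} direction of the weak scaling inequality produces an upper versus a lower bound on the integrand, since $\varphi$ appears in the denominator; otherwise every step is an elementary power-function integral, convergent precisely because $0<\us\le\os<2$. No genuine obstacle is expected.
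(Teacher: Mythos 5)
Your computations for \eqref{e:wsc 0} and \eqref{e:wsc infty} are correct and match the paper's approach (the paper explicitly proves only the lower bound in \eqref{e:wsc 0} and says the rest follow ``in the same manner''). The careful tracking of which direction of the weak scaling inequality gives an upper versus lower bound on the integrand is exactly right.

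However, for \eqref{e:wsc ratio} you have the direction of an inequality reversed in the prose, and this confusion makes your ``simple'' first estimate look sufficient when it is not. You derive $\uC_\varphi(R)/\uC_\varphi(tR) \le 1 + a^2 t^{-2}$ and then claim this is ``even slightly stronger than claimed.'' But for $t \in (0,1)$ and $\us > 0$ we have $t^{-\us} > 1$, hence $t^{-2} = t^{-2+\us}\,t^{-\us} > t^{-2+\us}$, so $1 + a^2 t^{-2}$ is a \emph{larger} upper bound than $1 + a^2 t^{-2+\us}$ --- i.e., strictly \emph{weaker}, and in particular it does not imply the stated \eqref{e:wsc ratio}. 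Had you stopped there, the proof of \eqref{e:wsc ratio} would be incomplete. Fortunately, the calculation you tuck into the parenthetical --- using the sharper $\varphi(r) \ge a^{-1}(r/(tR))^{\us}\varphi(tR)$ on $[tR,R]$, then bounding $\int_{tR}^R r^{1-\us}\,dr \le R^{2-\us}/(2-\us)$ and dividing by the lower bound $\uC_\varphi(tR) \ge (tR)^2 / (a(2-\us)\varphi(tR))$ from \eqref{e:wsc 0} --- is exactly the paper's argument and does produce $a^2 t^{-2+\us}$. So the proof is correct, but you should promote that parenthetical to the main line and drop the misleading remark: the crude $a^2 t^{-2}$ bound is a warm-up, not a strengthening.
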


\begin{proof}
Using the weak scaling condition \eqref{a:wsc} we see that
\begin{align*}
\uC_\varphi (R) \geq \int_0^R \frac{1}{a} \frac{r}{\varphi(R)} \left( \frac{R}{r} \right)^{\us} \, dr = \frac{1}{a(2-\us)} \frac{R^2}{\varphi(R)},
\end{align*}
which is the first inequality in \eqref{e:wsc 0}. The second inequality in \eqref{e:wsc 0} and the inequalities in \eqref{e:wsc infty} can be proved in the same manner. The last inequality follows from \eqref{a:wsc} and \eqref{e:wsc 0} that
\begin{align*}
\frac{\uC_\varphi (R)}{\uC_\varphi (tR)}
&= 1 + \frac{1}{\uC_\varphi(tR)} \int_{tR}^R \frac{r}{\varphi(r)} \, dr \leq 1 + a(2-\us) \frac{\varphi(tR)}{(tR)^2} \int_{tR}^R a \frac{(tR)^{\us}}{\varphi(tR)} r^{1-\us} \, dr \\
&\leq 1+ a^2 (tR)^{-2+\us} \left( R^{2-\us} - (tR)^{2-\us} \right) \leq 1 + a^2 t^{-2+\us}.
\end{align*}
\end{proof}

Next we will prove Lemma \ref{l:asymp} using Lemma \ref{l:wsc} and the fact that the constant $C_\varphi$ can be represented by
\begin{align*}
C_\varphi^{-1}
&= \int_{\R} \int_{\R^{n-1}} \frac{1-\cos y_1}{\ve y_1 \ve^n \left( 1 + \frac{\ve y' \ve^2}{\ve y_1 \ve^2} \right)^{n/2} \varphi \left( \ve y_1 \ve \left( 1 + \frac{\ve y' \ve^2}{\ve y_1 \ve^2} \right)^{1/2} \right)} \, dy' dy_1 \\
&= \int_{\R} \int_{\R^{n-1}} \frac{1-\cos y_1}{\ve y_1 \ve (1+\ve y' \ve^2)^{n/2} \varphi(\ve y_1 \ve (1+\ve y' \ve^2)^{1/2})} \, dy' dy_1 \\
&= \int_{\R^{n-1}} \int_{\R} \frac{1-\cos \frac{r}{\zeta}}{\ve r \ve \varphi(\ve r \ve)} \, dr \frac{dy'}{\zeta^n},
\end{align*}
where $\zeta = \zeta(y') = (1+\ve y' \ve^2)^{1/2}$.

\begin{proof} [Proof of Lemma \ref{l:asymp}]
We first assume that
\begin{align} \label{a:2}
\lim_{k \rightarrow \infty} \us_k = \lim_{k \rightarrow \infty} \os_k = 2.
\end{align}
We use the inequality \eqref{e:wsc infty} to compute
\begin{align*}
0 &\leq \int_{\ve r \ve \geq R} \frac{1-\cos \frac{r}{\zeta}}{\ve r \ve \varphi_k (\ve r \ve)} \, dr \leq \int_R^\infty \frac{4}{r \varphi_k(r)} \, dr \leq \frac{4a_k}{\us_k \varphi_k(R)}.
\end{align*}
Using the assumptions \eqref{a:a_k}, \eqref{a:2}, and the inequality \eqref{e:wsc 0}, we obtain
\begin{align*}
0 \leq \lim_{k\rightarrow \infty} \frac{1}{\uC_{\varphi_k}(R)} \int_{\ve r \ve \geq R} \frac{1-\cos \frac{r}{\zeta}}{\ve r \ve \varphi_k (\ve r \ve)} \, dr \leq \lim_{k \rightarrow \infty} 4a_k^2 \frac{2-\us_k}{\us_k} R^{-2} = 0.
\end{align*}
On the other hand, using the weak scaling condition \eqref{a:wsc}, we have
\begin{align*}
\lv \int_{\ve r \ve < R} \frac{1-\cos \frac{r}{\zeta}}{\ve r \ve \varphi_k (\ve r \ve)} \, dr - \int_{\ve r \ve < R} \frac{r^2}{2\zeta^2 \ve r \ve \varphi_k(\ve r \ve)} \, dr \rv 
&\leq \frac{1}{24} \int_{\ve r \ve < R} \frac{r^4}{\zeta^4 \ve r \ve \varphi_k (\ve r \ve)} \, dr \\
&\leq \frac{a_k}{12 \zeta^4 (4-\os_k)} \frac{R^4}{\varphi_k(R)}.
\end{align*}
Since
\begin{align*}
0 \leq \lim_{k \rightarrow \infty} \frac{1}{\uC_{\varphi_k}(R)} \frac{a_k}{12\zeta^4 (4-\os_k)} \frac{R^4}{\varphi_k(R)} \leq \lim_{k \rightarrow \infty} \frac{a_k^2}{12\zeta^2} \frac{2-\us_k}{4-\us_k} R^2 = 0,
\end{align*}
we obtain
\begin{align*}
\lim_{k \rightarrow \infty} \frac{1}{\uC_{\varphi_k}(R)} \int_{\R} \frac{1-\cos \frac{r}{\zeta}}{\ve r \ve \varphi_k (\ve r \ve)} \, dr = \lim_{k \rightarrow \infty} \frac{1}{\uC_{\varphi_k}(R)} \frac{1}{\zeta^2} \int_0^R \frac{r}{\varphi_k (r)} \, dr = \frac{1}{\zeta^2}.
\end{align*}
Therefore, we conclude that
\begin{align*}
\lim_{k \rightarrow \infty} C_{\varphi_k} \uC_{\varphi_k}(R)
&= \lim_{k \rightarrow \infty} \left( \int_{\R^{n-1}} \int_{\R} \frac{1-\cos \frac{r}{\zeta}}{\ve r \ve \varphi_k (\ve r \ve)} \, dr \frac{dy'}{\zeta^n} \right)^{-1} \uC_{\varphi_k}(R) \\
&= \left( \int_{\R^{n-1}} \lim_{k \rightarrow \infty} \frac{1}{\uC_{\varphi_k}(R)} \int_{\R} \frac{1-\cos \frac{r}{\zeta}}{\ve r \ve \varphi_k(\ve r \ve)} \, dr \frac{dy'}{\zeta^n} \right)^{-1} \\
&= \left( \int_{\R^{n-1}} \frac{1}{\zeta^{n+2}} \, dy' \right)^{-1} = \frac{2}{\omega_n}.
\end{align*}
See \cite[Corollary 4.2]{DNPV} for the last equality.

We next assume that
\begin{align} \label{a:0}
\lim_{k \rightarrow \infty} \us_k = \lim_{k \rightarrow \infty} \os_k = 0.
\end{align}
We use the inequality \eqref{e:wsc 0} to compute
\begin{align*}
0 \leq \int_{\ve r \ve < R} \frac{1-\cos \frac{r}{\zeta}}{\ve r \ve \varphi_k (\ve r \ve)} \, dr \leq \frac{1}{\zeta^2} \int_0^{R} \frac{r}{\varphi_k(r)} \, dr \leq \frac{a_k}{\zeta^2 (2-\os_k)} \frac{R^2}{\varphi_k(R)}.
\end{align*}
Using the assumptions \eqref{a:a_k}, \eqref{a:0}, and the inequality \eqref{e:wsc infty}, we obtain
\begin{align} \label{e:<R}
\lim_{k\rightarrow \infty} \frac{1}{\oC_{\varphi_k}(R)} \int_{\ve r \ve < R} \frac{1-\cos \frac{r}{\zeta}}{\ve r \ve \varphi_k (\ve r \ve)} \, dr \leq \lim_{k \rightarrow \infty} \frac{a_k^2}{\zeta^2} \frac{\os_k}{2-\os_k} R^2 = 0.
\end{align}
On the other hand, observe that for any integer $m \geq 1$ we have 
\begin{align} \label{e:int A} 
\begin{split}
\lv \int_{2m \zeta \pi}^{2(m+1)\zeta \pi} \frac{\cos \frac{r}{\zeta}}{r \varphi_k(r)} \, dr \rv
&= \lv \int_{2m\zeta \pi}^{(2m+1)\zeta \pi} \left( \frac{\cos \frac{r}{\zeta}}{r \varphi_k (r)} + \frac{\cos(\frac{r}{\zeta}+\pi)}{(r+\zeta \pi) \varphi_k(r+ \zeta \pi)} \right) dr \rv \\
&\leq \int_{2m \zeta \pi}^{(2m +1) \zeta \pi} \lv \frac{1}{r \varphi_k(r)} - \frac{1}{(r+\zeta \pi) \varphi_k(r+\zeta \pi)} \rv dr \\
&= \int_{2m}^{2m+1} \lv \frac{1}{r\varphi_k(\zeta \pi r)} - \frac{1}{(r+1)\varphi_k(\zeta\pi(r+1))} \rv dr.
\end{split}
\end{align}
For the notational convenience, let us write $A = \frac{1}{r\varphi_k(\zeta \pi r)} - \frac{1}{(r+1) \varphi_k(\zeta\pi(r+1))}$. If $A \geq 0$, then by the weak scaling condition \eqref{a:wsc}, we have for $r \in [2m, 2m+1]$
\begin{align*}
\ve A \ve 
&\leq \frac{a_k}{\varphi_k((2m+1)\zeta\pi)} \frac{1}{r} \left( \frac{2m+1}{r} \right)^{\os_k} - \frac{1}{a_k \varphi_k((2m+1)\zeta\pi)} \frac{1}{r+1} \left( \frac{2m+1}{r+1} \right)^{\os_k} \\
&= \frac{(2m+1)^{\os_k}}{a_k \varphi_k((2m+1) \zeta\pi)} \frac{a_k^2 (r+1)^{1+\os_k} - r^{1+\os_k}}{r^{1+\os_k} (r+1)^{1+\os_k}} \\
&\leq \frac{1}{2a_k m \varphi((2m+1)\zeta \pi)} \left( \frac{2m+1}{2m} \right)^{\os_k} \frac{a_k^2 (r+1)^{1+\os_k} - r^{1+\os_k}}{(r+1)^{1+\os_k}} \\
&\leq \frac{9}{8a_k m \varphi((2m+1)\zeta \pi)} \frac{a_k^2 (r+1)^{1+\os_k} - r^{1+\os_k}}{(r+1)^{1+\os_k}}.
\end{align*}
If $A \leq 0$, then we have
\begin{align*}
\ve A \ve \leq \frac{9}{8a_k m \varphi_k((2m+1)\zeta \pi)} \frac{a_k^2 r^{1+\us_k} - (r+1)^{1+\us_k}}{(r+1)^{1+\us_k}}
\end{align*}
by similar argument. Since
\begin{align*}
\frac{a_k^2 (r+1)^{1+\os_k} - r^{1+\os_k}}{(r+1)^{1+\os_k}}
=& \int_0^1 \frac{d}{ds} \left( (1+(a_k^2-1)s) (r+s)^{1+\os_k} \right) \frac{1}{(r+1)^{1+\os_k}} \, ds \\
=& \int_0^1 \frac{(a_k^2 - 1)(r+s)^{1+\os_k}}{(r+1)^{1+\os_k}} \, ds \\
&+ \int_0^1 \frac{(1+\os_k)(1 + (a_k^2-1) s)(r+s)^{\os_k}}{(r+1)^{1+\os_k}} \, ds \\
\leq& a_k^2 - 1 + \frac{3a_k^2}{r+1}
\end{align*}
and
\begin{align*}
\frac{a_k^2 r^{1+\us_k} - (r+1)^{1+\us_k}}{(r+1)^{1+\us_k}}
=& \int_0^1 \frac{d}{ds} \left( (a_k^2s + 1-s)(r+1-s)^{1+\us_k} \right) \frac{1}{(r+1)^{1+\us_k}} \, ds \\
=& \int_0^1 \frac{(a_k^2-1)(r+1-s)^{1+\us_k}}{(r+1)^{1+\us_k}} \, ds \\
&- \int_0^1 \frac{(1+\us_k)(a_k^2s + 1 - s)(r+1-s)^{\us_k}}{(r+1)^{1+\us_k}} \, ds \leq a_k^2-1,
\end{align*}
we further estimate the integrand $\ve A \ve$ in \eqref{e:int A} as
\begin{align} \label{e:A}
\ve A \ve \leq \frac{9}{8a_k m \varphi((2m+1)\zeta\pi)} \left( a_k^2 - 1 + \frac{3a_k^2}{r+1} \right)
\end{align}
regardless of the sign of $A$.

Let $N \geq 1$ be the integer satisfying $2(N-1)\zeta\pi < R \leq 2N\zeta \pi$. Then, from \eqref{e:int A} and \eqref{e:A}, we have for $m \geq N$
\begin{align*}
\lv \int_{2m \zeta \pi}^{2(m+1)\zeta \pi} \frac{\cos \frac{r}{\zeta}}{r \varphi_k(r)} \, dr \rv
&\leq \frac{9}{8a_k m \varphi_k ((2m+1)\zeta \pi)} \int_{2m}^{2m+1} \left( a_k^2 - 1 +  \frac{3a_k^2}{r+1} \right) \, dr \\
&\leq \frac{9}{8a_k m \varphi_k ((2m+1)\zeta \pi)} \left( a_k^2 - 1 +  \frac{3a_k^2}{2m+1} \right) \\
&\leq \frac{9(a_k^2-1)}{8a_k} \frac{1}{m \varphi_k((2m+1) \zeta \pi)} + \frac{27a_k^2}{16 \varphi_k(R)} \frac{1}{m^2}.
\end{align*}
As a consequence, we have
\begin{align} \label{e:cos}
\begin{split}
\Bigg\ve \int_R^\infty &\frac{\cos \frac{r}{\zeta}}{r \varphi_k (r)} \, dr \Bigg\ve \leq \int_R^{2N\zeta \pi} \frac{dr}{r \varphi_k(r)} + \sum_{m = N}^\infty \lv \int_{2m\zeta \pi}^{2(m+1) \zeta \pi} \frac{\cos \frac{r}{\zeta}}{r \varphi_k (r)} \, dr \rv \\
&\leq \frac{a_k}{\varphi_k(R)} \int_R^{2N\zeta\pi} \frac{dr}{r} + \sum_{m=N}^\infty \left( \frac{9(a_k^2-1)}{8a_k m \varphi_k((2m+1) \zeta \pi)} + \frac{27a_k^2}{16 \varphi_k(R)} \frac{1}{m^2} \right) \\
&\leq \frac{a_k}{\varphi_k(R)} \log \frac{2N\zeta\pi}{R} + \frac{9(a_k^2 - 1)}{8a_k} \sum_{m=N}^\infty \frac{1}{m \varphi_k((2m+1) \zeta \pi)} + \frac{9\pi^2 a_k^2}{32 \varphi_k(R)}.
\end{split}
\end{align}
Now, we claim that 
\begin{align} \label{e:>R}
\lim_{k \rightarrow \infty} \frac{1}{\oC_{\varphi_k}(R)} \int_R^\infty \frac{\cos \frac{r}{\zeta}}{r \varphi_k(r)} \, dr = 0.
\end{align}
Indeed, by using the assumptions \eqref{a:a_k}, \eqref{a:0}, and the inequality \eqref{e:wsc infty}, the first and the third terms in \eqref{e:cos} can be handled as
\begin{align*}
0 \leq \lim_{k \rightarrow \infty} \frac{1}{\oC_{\varphi_k}(R)} \frac{a_k}{\varphi_k(R)} \log \frac{2N\zeta\pi}{R} \leq \lim_{k \rightarrow \infty} a_k^2 \os_k \log \frac{2N\zeta\pi}{R} = 0
\end{align*}
and
\begin{align*}
0 \leq \lim_{k \rightarrow \infty} \frac{1}{\oC_{\varphi_k}(R)} \frac{9 \pi^2 a_k^2}{32 \varphi_k(R)} \leq \lim_{k \rightarrow \infty} \frac{9 \pi^2 a_k^3}{32} \os_k = 0.
\end{align*}
For the second term, we first observe that
\begin{align*}
\oC_{\varphi_k}(R) 
&= \int_R^\infty \frac{1}{r \varphi_k(r)} \, dr \geq \sum_{m=N}^\infty \int_{2m\zeta \pi}^{2(m+1) \zeta \pi} \frac{1}{r \varphi_k(r)} \, dr \\
&\geq \frac{1}{a_k} \sum_{m=N}^\infty \int_{2m\zeta \pi}^{2(m+1) \zeta \pi} \frac{1}{2(m+1) \zeta \pi \varphi_k(2(m+1) \zeta \pi)} \, dr \\
&= \frac{1}{a_k} \sum_{m=N}^\infty \frac{1}{(m+1) \varphi_k(2(m+1) \zeta \pi)}.
\end{align*}
Since $m+1 \leq 2m$ and
\begin{align*}
\frac{\varphi_k(2(m+1) \zeta \pi)}{\varphi_k((2m+1) \zeta \pi)} \leq a_k \left( \frac{2m+2}{2m+1} \right)^{\os_k} \leq a_k \left( \frac{4}{3} \right)^2,
\end{align*}
we have
\begin{align*}
\oC_{\varphi_k}(R) \geq \frac{9}{16a_k^2} \sum_{m=N}^\infty \frac{1}{m \varphi_k((2m+1) \zeta \pi)},
\end{align*}
which yields that
\begin{align*}
\frac{1}{\oC_{\varphi_k}(R)} \sum_{m=N}^\infty \frac{1}{m \varphi_k((2m+1) \zeta \pi)} \leq \frac{16a_k^2}{9}.
\end{align*}
Thus, we obtain
\begin{align*}
0 \leq \lim_{k \rightarrow \infty} \frac{1}{\oC_{\varphi_k}(R)} \frac{9(a_k^2 - 1)}{8a_k} \sum_{m=N}^\infty \frac{1}{m \varphi_k((2m+1) \zeta \pi)} \leq \lim_{k \rightarrow \infty} 2a_k(a_k^2 -1) = 0,
\end{align*}
and this proves the claim. By \eqref{e:<R} and \eqref{e:>R}, we have
\begin{align*}
\lim_{k \rightarrow \infty} \frac{1}{\oC_{\varphi_k}(R)} \int_{\R} \frac{1-\cos \frac{r}{\zeta}}{\ve r \ve \varphi_k (\ve r \ve)} \, dr = \lim_{k \rightarrow \infty} \frac{1}{\oC_{\varphi_k}(R)} \int_{\ve r \ve \geq R} \frac{dr}{\ve r \ve \varphi_k (\ve r \ve)} = 2.
\end{align*}
Therefore, we conclude that
\begin{align*}
\lim_{k \rightarrow \infty} C_{\varphi_k} \oC_{\varphi_k}(R)
&= \lim_{k \rightarrow \infty} \left( \int_{\R^{n-1}} \int_{\R} \frac{1-\cos \frac{r}{\zeta}}{\ve r \ve \varphi_k(\ve r \ve)} \, dr \frac{dy'}{\zeta^n} \right)^{-1} \oC_{\varphi_k}(R) \\
&= \left( \int_{\R^{n-1}} \lim_{k \rightarrow \infty} \frac{1}{\oC_{\varphi_k}(R)} \int_{\R} \frac{1-\cos \frac{r}{\zeta}}{\ve r \ve \varphi_k(\ve r \ve)} \, dr \frac{dy'}{\zeta^n} \right)^{-1} \\
&= \left( \int_{\R^{n-1}} \frac{2}{\zeta^n} \, dy' \right)^{-1} = \frac{1}{n\omega_n},
\end{align*}
which finishes the proof. See \cite[Corollary 4.2]{DNPV} for the last equality.
\end{proof}

We next prove Proposition \ref{p:lim} using Lemma \ref{l:asymp}.

\begin{proof} [Proof of Proposition \ref{p:lim}]
Assume first that $\displaystyle \lim_{k \rightarrow \infty} \us_k = \lim_{k \rightarrow \infty} \os_k = 2$. In this case, we have no contribution outside the unit ball. Indeed, using inequality \eqref{e:wsc infty} we have
\begin{align*}
\lv -\int_{B_1^c} \frac{\delta(u, x, y)}{\ve y \ve^n \varphi_k(\ve y \ve)} \, dy \rv \leq 4n \omega_n \Ve u \Ve_{L^\infty(\Rn)} \int_1^\infty \frac{dr}{r\varphi_k (r)} \leq \frac{4n\omega_n a_k}{\us_k \varphi_k(1)} \Ve u \Ve_{L^\infty(\Rn)}.
\end{align*}
Hence, using the inequality \eqref{e:wsc 0} and the limit \eqref{e:asymp2} we obtain
\begin{align*}
\lv - \frac{1}{2} C_{\varphi_k} \int_{B_1^c} \frac{\delta(u,x,y)}{\ve y \ve^n \varphi_k (\ve y \ve)} \, dy \rv
&= \frac{1}{2\uC_{\varphi_k}(1)} C_{\varphi_k} \uC_{\varphi_k}(1) \lv \int_{B_1^c} \frac{\delta(u,x,y)}{\ve y \ve^n \varphi_k (\ve y \ve)} \, dy \rv \\
&\leq 2n \omega_n a_k^2 \frac{2-\us_k}{\us_k} C_{\varphi_k} \uC_{\varphi_k}(1) \Ve u \Ve_{L^\infty(\Rn)} \rightarrow 0
\end{align*}
as $k \rightarrow \infty$. On the other hand, we have
\begin{align*}
\lv \int_{B_1} \frac{\delta(u,x,y) - y \cdot D^2 u(x) y}{\ve y \ve^n \varphi_k (\ve y \ve)} \, dy \rv
&\leq \Ve u \Ve_{C^3(\Rn)} \int_{B_1} \frac{\ve y \ve^3}{\ve y \ve^n \varphi_k (\ve y \ve)} \, dy \\
&\leq n\omega_n \Ve u \Ve_{C^3(\Rn)} \int_0^1 \frac{r^2}{\varphi_k (r)} \, dr \\
&\leq \frac{n\omega_n a_k}{\varphi_k (1)(3-\os_k)} \Ve u \Ve_{C^3(\Rn)},
\end{align*}
and this implies that 
\begin{align*}
\lim_{k \rightarrow \infty} - \frac{1}{2} C_{\varphi_k} \int_{B_1} \frac{\delta(u,x,y)}{\ve y \ve^n \varphi_k (\ve y \ve)} \, dy = \lim_{k \rightarrow \infty} - \frac{1}{2} C_{\varphi_k} \int_{B_1} \frac{y \cdot D^2 u(x) y}{\ve y \ve^n \varphi_k (\ve y \ve)} \, dy.
\end{align*}
Note that if $i \neq j$ then
\begin{align*}
\int_{B_1} \frac{D_{ij} u(x) y_i y_j}{\ve y \ve^n \varphi_k (\ve y \ve)} \, dy = - \int_{B_1} \frac{D_{ij} u(x) \tilde{y}_i \tilde{y}_j}{\ve \tilde{y} \ve^n \varphi_k (\ve \tilde{y} \ve)} \, d\tilde{y},
\end{align*}
where $\tilde{y}_j = - y_j$ and $\tilde{y}_k = \tilde{y_k}$ for any $k \neq j$, and hence
\begin{align*}
\int_{B_1} \frac{D_{ij} u(x) y_i y_j}{\ve y \ve^n \varphi_k ( \ve y \ve)} \, dy = 0.
\end{align*}
Thus, we have
\begin{align*}
\int_{B_1} \frac{y \cdot D^2 u(x) y}{\ve y \ve^n \varphi_k (\ve y \ve)} \, dy 
&= \sum_{i=1}^n D_{ii} u(x) \int_{B_1} \frac{y_i^2}{\ve y \ve^n \varphi_k (\ve y \ve)} \, dy \\
&= \sum_{i=1}^n \frac{D_{ii} u(x)}{n} \int_{B_1} \frac{\ve y \ve^2}{\ve y \ve^n \varphi_k (\ve y \ve)} \, dy = \omega_n \Delta u(x) \int_0^1 \frac{r}{\varphi_k(r)} \, dr.
\end{align*}
Using \eqref{e:asymp2} we conclude that 
\begin{align*}
\lim_{k \rightarrow \infty } - L_k u(x) = \left( \lim_{k \rightarrow \infty} \frac{\omega_n}{2} C_{\varphi_k} \uC_{\varphi_k}(1) \right) (-\Delta u)(x) = - \Delta u(x).
\end{align*}

Next, we assume that $\displaystyle \lim_{k \rightarrow \infty} \us_k = \lim_{k \rightarrow \infty} \os_k = 0$. Fix $x \in \Rn$ and let $R_0 > 0$ be such that $\mathrm{supp} \, u \subset B_{R_0}$ and set $R = R_0 + \ve x \ve +1$. Then using the inequality \eqref{e:wsc 0} we have
\begin{align*}
\lv \int_{B_R} \frac{\delta(u,x,y)}{\ve y \ve^n \varphi_k (\ve y \ve)} \, dy \rv \leq n\omega_n \Ve u \Ve_{C^2(\Rn)} \int_0^R \frac{r}{\varphi_k (r)} \, dr \leq \frac{n\omega_n a_k}{2-\os_k} \frac{R^2}{\varphi_k (R)} \Ve u \Ve_{C^2(\Rn)}.
\end{align*}
Hence, using the inequality \eqref{e:wsc infty} and the limit \eqref{e:asymp0} we obtain
\begin{align*}
\lv -\frac{1}{2} C_{\varphi_k} \int_{B_R} \frac{\delta(u,x,y)}{\ve y \ve^n \varphi_k (\ve y \ve)} \, dy \rv
&\leq \frac{1}{2\oC_{\varphi_k}(R)} C_{\varphi_k} \oC_{\varphi_k}(R) \lv \int_{B_R} \frac{\delta(u,x,y)}{\ve y \ve^n \varphi_k (\ve y \ve)} \, dy \rv \\
&\leq \frac{n \omega_n a_k^2 R^2}{2} \frac{\os_k}{2-\os_k} C_{\varphi_k} \oC_{\varphi_k}(R) \Ve u \Ve_{C^2(\Rn)} \rightarrow 0
\end{align*}
as $k \rightarrow \infty$. On the other hand, if $\ve y \ve \geq R$, then $\ve x \pm y \ve > R_0$ and consequently $u(x \pm y) = 0$. Thus, we have
\begin{align*}
- \frac{1}{2} C_{\varphi_k} \int_{\Rn \setminus B_R} \frac{\delta(u,x,y)}{\ve y \ve^n \varphi_k (\ve y \ve)} \, dy = n \omega_n C_{\varphi_k} \oC_{\varphi_k}(R) u(x).
\end{align*}
Therefore, using \eqref{e:asymp0} we conclude that
\begin{align*}
\lim_{k \rightarrow \infty} L_k u(x) = n \omega_n \lim_{k \rightarrow \infty} C_{\varphi_k} \oC_{\varphi_k}(R) u(x) = u(x),
\end{align*}
which finishes the proof.
\end{proof}

The Lemma \ref{l:asymp} concerns about the limiting behavior of a sequence of constants $C_{\varphi_k}$, and does not provide an information about a fixed constant $C_\varphi$. To obtain uniform regularity estimates, we need uniform bounds for the constant $C_\varphi$ and these bounds will play an important role in the uniform estimates in the sequel.

\begin{lemma} \label{l:bound}
There exist constants $c_1, c_2 > 0$, depending only on $n$, such that for any $R > 0$
\begin{align} \label{e:bound}
\frac{c_1}{\uC(R) + \oC(R)} \leq C_\varphi \leq \frac{ac_2}{\uC(1) + \oC(1)}.
\end{align}
\end{lemma}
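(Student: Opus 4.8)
The plan is to recall the integral representation of $C_\varphi^{-1}$ derived just before the proof of Lemma \ref{l:asymp}, namely
\begin{align*}
C_\varphi^{-1} = \int_{\R^{n-1}} \left( \int_{\R} \frac{1-\cos \frac{r}{\zeta}}{\ve r \ve \varphi(\ve r \ve)} \, dr \right) \frac{dy'}{\zeta^n}, \qquad \zeta = \zeta(y') = (1+\ve y' \ve^2)^{1/2},
\end{align*}
and to estimate the inner one-dimensional integral from above and below in terms of $\uC_\varphi$ and $\oC_\varphi$, uniformly in $\zeta \geq 1$. For the \emph{lower} bound on $C_\varphi$ (i.e.\ an upper bound on $C_\varphi^{-1}$) I would split the inner integral at $\ve r \ve = R$: on $\ve r \ve < R$ use $1 - \cos(r/\zeta) \leq r^2/(2\zeta^2) \leq r^2/2$ to bound it by $\tfrac12\int_0^R \frac{r}{\varphi(r)}\,dr = \tfrac12 \uC_\varphi(R)$, and on $\ve r \ve \geq R$ use $1 - \cos(r/\zeta) \leq 2$ to bound it by $2\int_R^\infty \frac{dr}{r\varphi(r)} = 2\oC_\varphi(R)$. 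This gives an inner integral $\leq 2(\uC_\varphi(R) + \oC_\varphi(R))$ independently of $y'$, and then $C_\varphi^{-1} \leq 2(\uC_\varphi(R) + \oC_\varphi(R)) \int_{\R^{n-1}} \zeta^{-n}\,dy' = c(n)(\uC(R)+\oC(R))$, since $\int_{\R^{n-1}}\zeta^{-n}\,dy' < \infty$ depends only on $n$. Wait — this uses $\uC_\varphi(R) + \oC_\varphi(R)$ for the \emph{given} $R$; but the claimed inequality $\tfrac{c_1}{\uC(R)+\oC(R)} \leq C_\varphi$ is exactly this with that $R$, so we are done with the left inequality. (Here I am using that the inner integral is finite, which follows from \eqref{a:wsc} guaranteeing $\us,\os \in (0,2)$ so that both $\uC_\varphi$ and $\oC_\varphi$ converge.)

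For the \emph{upper} bound on $C_\varphi$ (i.e.\ a lower bound on $C_\varphi^{-1}$) I would work at a fixed scale $R = 1$ and it suffices to bound the inner integral from below by a constant multiple of $\uC(1) + \oC(1)$ for $y'$ in some fixed-measure region, say $\ve y' \ve \leq 1$, on which $\zeta$ is bounded between $1$ and $\sqrt{2}$. For the $\uC$-part, on $\ve r \ve < 1$ one has $1 - \cos(r/\zeta) \geq c\, r^2/\zeta^2 \geq c\, r^2/2$ for $\ve r \ve$ small (say $\ve r \ve < \zeta\pi/2$, which contains a fixed neighbourhood of $0$), giving a lower bound $\gtrsim \int_0^{c_0} \frac{r}{\varphi(r)}\,dr$; by the weak scaling condition \eqref{a:wsc}, $\int_0^{c_0}\frac{r}{\varphi(r)}\,dr$ and $\uC(1) = \int_0^1 \frac{r}{\varphi(r)}\,dr$ are comparable up to a constant depending only on $a$ (and $n$ through $c_0$). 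For the $\oC$-part, on $\ve r \ve \geq 1$ write $1 - \cos(r/\zeta) = 1 - \cos(\ve r\ve/\zeta)$ and note its average over any period is $1$; more simply, restrict to the union of intervals where $1 - \cos(r/\zeta) \geq 1$ (a fixed fraction of each period) to get a lower bound $\gtrsim \int_1^\infty \frac{dr}{r\varphi(r)} = \oC(1)$, again up to a constant depending only on $a$ via \eqref{a:wsc} comparing $\varphi$ at nearby points. Integrating over $\ve y' \ve \leq 1$ gives $C_\varphi^{-1} \gtrsim c(n,a)^{-1}(\uC(1) + \oC(1))$, hence $C_\varphi \leq \frac{a c_2}{\uC(1)+\oC(1)}$ after absorbing the $a$-dependence.

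The main obstacle is the lower bound on the $\oC$-part of the inner integral: unlike the upper bound, where $1 - \cos \leq 2$ is trivial, here one must exploit cancellation-free positivity of $1-\cos(r/\zeta)$ on a definite fraction of each period and then compare $\varphi(r)$ across that period using \eqref{a:wsc} — the care needed is to ensure the resulting constant depends only on $n$ and $a$ and not on $\us,\os$ or $\varphi$ itself. A clean way to package this is: on each interval $I_m = [2m\zeta\pi + \zeta\pi/3,\ 2m\zeta\pi + 2\zeta\pi/3]$ one has $1 - \cos(r/\zeta) \geq 1$, the length $\ve I_m\ve = \zeta\pi/3$ is comparable to the gap, and $\varphi$ on $I_m$ is comparable to $\varphi(2m\zeta\pi)$ with constant $a\cdot 2^{\os} \leq 4a$; summing $\sum_m \ve I_m\ve / (r\varphi(r)) \gtrsim \sum_m 1/(m\varphi(2m\zeta\pi))$ and comparing with $\oC(1) = \int_1^\infty dr/(r\varphi(r))$ via the same dyadic-type estimate as in the proof of Lemma \ref{l:asymp} yields the desired lower bound with constants depending only on $n$ and $a$.
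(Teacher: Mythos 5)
Your proposal follows the same two-step strategy as the paper's proof: the lower bound on $C_\varphi$ via the elementary inequalities $1-\cos t \leq \min\{t^2/2,\, 2\}$ applied to the inner integral split at $|r|=R$, and the upper bound via $1-\cos t \gtrsim t^2$ near zero together with a periodicity-plus-weak-scaling argument on $[1,\infty)$, so this is essentially the paper's argument, with the mild streamlining of restricting the $y'$-integral to $|y'|\leq 1$ so that $\zeta$ stays bounded rather than carrying powers of $\zeta$ through and integrating them out at the end. One small slip to fix: on $[2m\zeta\pi+\zeta\pi/3,\,2m\zeta\pi+2\zeta\pi/3]$ the minimum of $1-\cos(r/\zeta)$ is $1/2$ (at the left endpoint), not $1$; either use the bound $\geq 1/2$ there, or take the intervals $[2m\zeta\pi+\zeta\pi/2,\,2m\zeta\pi+3\zeta\pi/2]$ as the paper does, on which $\cos(r/\zeta)\leq 0$.
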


\begin{proof}
For the lower bound, notice that from the inequality $1-\cos \frac{r}{\zeta} \leq \frac{r^2}{2\zeta^2}$ we have
\begin{align*}
\int_{\R} \frac{1-\cos \frac{r}{\zeta}}{\ve r \ve \varphi(\ve r \ve)} \, dr = 2\int_0^R \frac{1-\cos \frac{r}{\zeta}}{r\varphi(r)} \, dr + 2\int_R^\infty \frac{1-\cos \frac{r}{\zeta}}{r \varphi(r)} \, dr \leq \frac{1}{\zeta^2} \uC(R) + 4\oC(R).
\end{align*}
Thus, it follows easily that
\begin{align*}
C_\varphi 
&= \left( \int_{\R^{n-1}} \int_{\R} \frac{1-\cos \frac{r}{\zeta}}{\ve r \ve \varphi(\ve r \ve)} \, dr \frac{dy'}{\zeta^n} \right)^{-1} \\
&\geq \left( \int_{\R^{n-1}} \left( \frac{1}{\zeta^2} \uC(R) + 4\oC(R) \right) \frac{dy'}{\zeta^n} \right)^{-1} \geq \frac{c_1(n)}{\uC(R) + \oC(R)}.
\end{align*}

For the upper bound, we first note that $1-\cos \frac{r}{\zeta} \geq \frac{r^2}{4\zeta^2}$ for $\ve r \ve \leq 1$ since $\zeta = (1+\ve y' \ve)^{1/2} \geq 1$. Thus we have
\begin{align} \label{e:0, infty}
\int_{\R} \frac{1-\cos \frac{r}{\zeta}}{\ve r \ve \varphi(\ve r \ve)} \, dr \geq \frac{1}{2\zeta^2} \uC + 2\int_1^\infty \frac{1-\cos \frac{r}{\zeta}}{r \varphi(r)} \, dr.
\end{align}
We next see that
\begin{align} \label{e:infty1}
\sum_{m=0}^\infty \int_{2m\zeta\pi + \frac{1}{2}\zeta\pi}^{2m\zeta\pi + \frac{3}{2}\zeta\pi} \frac{dr}{r\varphi(r)} \leq \int_1^\infty \frac{1-\cos \frac{r}{\zeta}}{r\varphi(r)} \, dr
\end{align}
and
\begin{align} \label{e:infty2}
\begin{split}
\sum_{m=0}^\infty \int_{2m\zeta\pi + \frac{3}{2}\zeta\pi}^{2m\zeta\pi + \frac{5}{2}\zeta\pi} \frac{dr}{r \varphi(r)}
&= \sum_{m=0}^\infty \int_{2m\zeta \pi + \frac{1}{2}\zeta\pi}^{2m\zeta\pi + \frac{3}{2}\zeta\pi} \frac{dr}{(r+\zeta\pi) \varphi(r+\zeta\pi)} \\
&\leq a \sum_{m=0}^\infty \int_{2m\zeta \pi + \frac{1}{2}\zeta\pi}^{2m\zeta\pi + \frac{3}{2}\zeta\pi} \frac{dr}{r \varphi(r)} \leq a\int_1^\infty \frac{1-\cos \frac{r}{\zeta}}{r\varphi(r)} \, dr.
\end{split}
\end{align}
If $r \geq 1+\zeta\pi$, then $r-\zeta \pi \geq \frac{1}{1+\zeta\pi} r$. Thus we have
\begin{align} \label{e:infty3}
\begin{split}
\int_1^{\frac{1}{2}\zeta\pi} \frac{dr}{r\varphi (r)} 
&= \int_{1+ \zeta \pi}^{\frac{3}{2} \zeta\pi} \frac{dr}{(r-\zeta\pi)\varphi (r-\zeta\pi)} \\
&\leq \int_{1+\zeta\pi}^{\frac{3}{2}\zeta\pi} \frac{a\left( 1+\zeta\pi \right)}{r \varphi(r)} \left( \frac{r}{r-\zeta\pi} \right)^{\os} \, dr \\
&\leq a \left( 1+\zeta\pi \right)^{1+\os} \int_{\frac{1}{2}\zeta\pi}^{\frac{3}{2}\zeta\pi} \frac{dr}{r \varphi(r)} \leq a \left( 1+\zeta\pi \right)^3 \int_1^\infty \frac{1-\cos \frac{r}{\zeta}}{r\varphi(r)} \, dr.
\end{split}
\end{align}
Combining \eqref{e:infty1}-\eqref{e:infty3}, we have
\begin{align*}
\int_1^\infty \frac{dr}{r\varphi(r)} \leq \left( 1+ a + a \left( 1+\zeta\pi \right)^3 \right) \int_1^\infty \frac{1-\cos\frac{r}{\zeta}}{r\varphi(r)} \, dr \leq Ca \zeta^3 \int_1^\infty \frac{1-\cos\frac{r}{\zeta}}{r\varphi(r)} \, dr.
\end{align*}
Putting this inequality into \eqref{e:0, infty}, we have
\begin{align*}
\int_{\R} \frac{1-\cos \frac{r}{\zeta}}{\ve r \ve \varphi(\ve r \ve)} \, dr \geq \frac{1}{2\zeta^2} \uC + \frac{C}{a\zeta^3} \oC.
\end{align*}
Therefore, we conclude that
\begin{align*}
C_\varphi \leq \left( \int_{\R^{n-1}} \left( \frac{1}{2\zeta^2} \uC + \frac{C}{a\zeta^3} \oC \right) \frac{dy'}{\zeta^n} \right)^{-1} \leq \frac{ac_2(n)}{\uC + \oC},
\end{align*}
which finishes the proof.
\end{proof}

\section{Viscosity Solutions} \label{s:3 VS}

In this section, we give a definition of viscosity solutions for integro-differential equations and a notion of the ellipticity as in \cite{CS}. We refer to \cite{CC} for the local equations. We begin with the notion of $C^{1,1}$ at the point.

\begin{definition}
A function $\psi$ is said to be $C^{1,1}$ at the point $x$, and we denote by $\psi \in C^{1,1}(x)$, if there is a vector $v \in \Rn$ and a number $M > 0$ such that
\begin{align*}
\ve \psi (x+y) - \psi(x) - v \cdot y \ve \leq M \ve y \ve^2 \quad \text{for} ~ \ve y \ve ~ \text{small enough}.
\end{align*}
We say that a function is $C^{1,1}$ in a set $\Omega$ if the previous definition holds at every point $x \in \Omega$ with a uniform constant $M$.
\end{definition}

We recall the definition of viscosity solutions for integro-differential equations.

\begin{definition} \label{d:vis}
A bounded function $u : \Rn \rightarrow \R$ which is upper (lower) semicontinuous in $\overline{\Omega}$ is said to be a {\it viscosity subsolution} ({\it viscosity supersolution}) to $Iu = f$, and we write $Iu \geq f$ ($Iu \leq f$), when the following holds: if a $C^2$-function $\psi$ touches $u$ from above (below) at $x \in \Omega$ in a small neighborhood $N$ of $x$, i.e., $\psi(x) = u(x)$ and $\psi > u$ in $N \setminus \lb x \rb$, then the function $v$ defined by 
\begin{align*}
v :=
\begin{cases}
\psi &\text{in} ~ N, \\
u &\text{in} ~ \Rn \setminus N,
\end{cases}
\end{align*}
satisfies $Iv(x) \geq f(x)$ ($Iv(x) \leq f(x)$). A function $u$ is said to be a {\it viscosity solution} if $u$ is both a viscosity subsolution and a viscosity supersolution.
\end{definition}

We can also give a definition of viscosity solutions to unbounded functions, but we will focus on bounded functions in this paper.

We next consider a collection $\mathcal{L}$ of linear integro-differential operators of the form \eqref{e:L symmetric} with kernels satisfying \eqref{a:K}. The {\it maximal operator} and the {\it minimal operator} with respect to $\mathcal{L}$ are defined as
\begin{align*}
\mathcal{M}^+_\mathcal{L} u = \sup_{L \in \mathcal{L}} Lu(x) \quad \text{and} \quad \mathcal{M}^-_\mathcal{L} u = \inf_{L \in \mathcal{L}} Lu(x).
\end{align*}
One example that we will use is the class $\mathcal{L}_0$. Recall that $\mathcal{L}_0$ is the class with kernels satisfying \eqref{a:K comp} additionally. In this case the maximal and the minimal operators are given by
\begin{align*}
&\mathcal{M}^+_{\mathcal{L}_0} u(x) = C_\varphi \int_{\Rn} \frac{\Lambda \delta(u,x,y)^+ - \lambda \delta(u,x,y)^-}{\ve y \ve^n \varphi(\ve y \ve)} \, dy \quad \text{and} \\
&\mathcal{M}^-_{\mathcal{L}_0} u(x) = C_\varphi \int_{\Rn} \frac{\lambda \delta(u,x,y)^+ - \Lambda \delta(u,x,y)^-}{\ve y \ve^n \varphi(\ve y \ve)} \, dy.
\end{align*}

Using these extremal operators, we give a general definition of ellipticity for nonlocal operators.

\begin{definition} \label{d:elliptic op}
Let $\mathcal{L}$ be a class of linear integro-differential operators. An {\it elliptic operator $I$ with respect to $\mathcal{L}$} is an operator with the following properties:
\begin{enumerate}
\item[(i)] If $u$ is bounded in $\Rn$ and is of $C^{1,1}(x)$, then $Iu(x)$ is defined classically.
\item[(ii)] If $u$ is bounded in $\Rn$ and is $C^2$ in some open set $\Omega$, then $Iu(x)$ is a continuous function in $\Omega$.
\item[(iii)] If $u$ and $v$ are bounded in $\Rn$ and are of $C^{1,1}(x)$, then
\begin{align*}
\mathcal{M}^-_\mathcal{L} (u-v) (x) \leq Iu(x) - Iv(x) \leq \mathcal{M}^-_\mathcal{L} (u-v) (x).
\end{align*}
\end{enumerate}
\end{definition}

For the nonlinear integro-differential operators of the form \eqref{e:I} we have the following properties: the proof can be found in \cite[Section 3 and 4]{CS}.

\begin{lemma} \label{l:classical}
Let $I$ be the operator of the form \eqref{e:I}. Then $I$ is an elliptic operator with respect to $\mathcal{L}_0$. Moreover, if $Iu \geq f$ in $\Omega$ in the viscosity sense and a function $\psi \in C^{1,1}(x)$ touches $u$ from above at $x$, then $Iu(x)$ is defined in the classical sense and $Iu(x) \geq f(x)$.
\end{lemma}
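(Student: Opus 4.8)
The statement has two parts: first that $I$ of the form \eqref{e:I} is elliptic with respect to $\mathcal{L}_0$ in the sense of Definition \ref{d:elliptic op}, and second the ``touching by $C^{1,1}$ test function upgrades to classical'' principle. The plan is to follow the scheme of \cite[Sections 3--4]{CS}, checking the three axioms (i)--(iii) and then the upgrade lemma, with the only genuinely new ingredient being that our kernels obey the comparability bound \eqref{a:K comp} with the normalizing constant $C_\varphi$ in place of $2-\sigma$, so that the relevant convergence of the defining integrals is controlled by the weak scaling condition \eqref{a:wsc} rather than by a single exponent.

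For axiom (i): if $u$ is bounded on $\Rn$ and $C^{1,1}(x)$, I would split each $L_{\alpha\beta}u(x) = \int_{\Rn}\delta(u,x,y)K_{\alpha\beta}(y)\,dy$ into the integral over a small ball $B_\rho$, where $|\delta(u,x,y)|\leq 2M|y|^2$ from the $C^{1,1}(x)$ bound, and the integral over $B_\rho^c$, where $|\delta(u,x,y)|\leq 4\|u\|_{L^\infty(\Rn)}$. Using \eqref{a:K comp} these are bounded respectively by $C_\varphi\Lambda\cdot n\omega_n\cdot 2M\int_0^\rho \frac{r}{\varphi(r)}\,dr = C_\varphi\Lambda\cdot 2Mn\omega_n\,\uC_\varphi(\rho)$ and by $C_\varphi\Lambda\cdot 4n\omega_n\|u\|_{L^\infty}\,\oC_\varphi(\rho)$, both finite by Lemma \ref{l:wsc} (and \eqref{a:K} handles the part of the estimate not covered by comparability, if $K$ is only assumed to satisfy \eqref{a:K} off a neighborhood of $0$). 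Hence each $L_{\alpha\beta}u(x)$ is a well-defined real number, uniformly bounded in $\alpha,\beta$, so $Iu(x)=\inf_\beta\sup_\alpha L_{\alpha\beta}u(x)$ is a finite real number. For axiom (ii): if $u$ is $C^2$ on an open set $\Omega$ and bounded on $\Rn$, then for $x,x'\in\Omega'\Subset\Omega$ the same splitting at a fixed small $\rho$ (with $B_\rho(x)\subset\Omega$) shows $|L_{\alpha\beta}u(x)-L_{\alpha\beta}u(x')|$ is small uniformly in $\alpha,\beta$: the inner-ball piece uses that $D^2u$ is uniformly continuous on $\Omega'$ together with $\int_0^\rho \frac{r}{\varphi(r)}\,dr<\infty$, and the outer piece uses that $x\mapsto\delta(u,x,y)$ is continuous with the $L^1$-in-$y$ dominating function $4\|u\|_{L^\infty}K_{\alpha\beta}(y)\chi_{B_\rho^c}$, which is integrable uniformly in $\alpha,\beta$ by \eqref{a:K comp} and $\oC_\varphi(\rho)<\infty$. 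Equicontinuity of the family $\{L_{\alpha\beta}u\}$ passes to $Iu$ because $\inf_\beta\sup_\alpha$ is $1$-Lipschitz for the sup-norm; hence $Iu$ is continuous on $\Omega$. For axiom (iii): for $u,v$ bounded and $C^{1,1}(x)$, both $Iu(x)$ and $Iv(x)$ are classically defined by (i), and $w:=u-v$ is again bounded and $C^{1,1}(x)$ so $L w(x)$ is defined for every $L\in\mathcal{L}_0$. The elementary inequality $\inf_\beta\sup_\alpha a_{\alpha\beta}-\inf_\beta\sup_\alpha b_{\alpha\beta}$ lies between $\inf_{\alpha\beta}(a_{\alpha\beta}-b_{\alpha\beta})$ and $\sup_{\alpha\beta}(a_{\alpha\beta}-b_{\alpha\beta})$, applied with $a_{\alpha\beta}=L_{\alpha\beta}u(x)$, $b_{\alpha\beta}=L_{\alpha\beta}v(x)$, and the linearity $L_{\alpha\beta}u(x)-L_{\alpha\beta}v(x)=L_{\alpha\beta}w(x)$, gives
\begin{align*}
\mathcal{M}^-_{\mathcal{L}_0}w(x) \;=\; \inf_{L\in\mathcal{L}_0} Lw(x) \;\leq\; \inf_{\alpha\beta} L_{\alpha\beta}w(x) \;\leq\; Iu(x)-Iv(x) \;\leq\; \sup_{\alpha\beta} L_{\alpha\beta}w(x) \;\leq\; \sup_{L\in\mathcal{L}_0} Lw(x) \;=\; \mathcal{M}^+_{\mathcal{L}_0}w(x),
\end{align*}
which is exactly (iii) (the statement of Definition \ref{d:elliptic op}(iii) in the text contains an evident typo, $\mathcal{M}^-$ on both sides; the intended right-hand bound is $\mathcal{M}^+_{\mathcal{L}}$).

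For the second assertion, suppose $Iu\geq f$ in $\Omega$ in the viscosity sense and $\psi\in C^{1,1}(x)$ touches $u$ from above at $x\in\Omega$. The standard device is to approximate $\psi$ from below near $x$ by smooth ($C^2$) paraboloids: for each small $\varepsilon>0$ one finds a $C^2$ function $\psi_\varepsilon$ with $\psi_\varepsilon(x)=u(x)$, $\psi_\varepsilon\geq u$ on a neighborhood $N_\varepsilon$ of $x$ with strict inequality off $x$ (so $\psi_\varepsilon$ is an admissible test function in Definition \ref{d:vis}), $\psi_\varepsilon\leq\psi$ on $N_\varepsilon$, and $\psi_\varepsilon\to\psi$ with $|\psi_\varepsilon(x+y)-\psi(x+y)|\to 0$ in a controlled way on the relevant ball; this is possible precisely because the $C^{1,1}(x)$ condition pins $\psi$ between two paraboloids of opening $M$. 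Let $v_\varepsilon$ agree with $\psi_\varepsilon$ on $N_\varepsilon$ and with $u$ outside; the viscosity inequality gives $Iv_\varepsilon(x)\geq f(x)$, and $Iv_\varepsilon(x)$ is classical by (i) since $v_\varepsilon$ is bounded and $C^{1,1}(x)$. Now let $v$ agree with $\psi$ on a fixed neighborhood and with $u$ outside; by (iii) with the pair $(v,v_\varepsilon)$, $Iv(x)-Iv_\varepsilon(x)\geq \mathcal{M}^-_{\mathcal{L}_0}(v-v_\varepsilon)(x)$, and since $v-v_\varepsilon\geq 0$ with $(v-v_\varepsilon)(x)=0$, one shows $\mathcal{M}^-_{\mathcal{L}_0}(v-v_\varepsilon)(x)\to 0$ as $\varepsilon\to 0$, splitting the defining integral at a small radius and using $\delta(v-v_\varepsilon,x,y)\geq -(v-v_\varepsilon)(x+y)-(v-v_\varepsilon)(x-y)$ together with the $L^1$ control from \eqref{a:K comp}, Lemma \ref{l:wsc}, and dominated convergence on $B_\rho^c$ plus smallness of $\int_0^\rho\frac{r}{\varphi(r)}\,dr$ on $B_\rho$. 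Passing to the limit yields $Iv(x)\geq f(x)$; but $v$ is bounded and $C^{1,1}(x)$ (it coincides with $\psi$ near $x$) and $\psi\geq u$ everywhere near $x$ implies $\delta(v,x,y)\geq\delta(u,x,y)$ for all $y$, whence — rereading $Iu(x)$ as classically defined, which it is once we know $u$ is touched by a $C^{1,1}(x)$ function from above (so $u$ is ``$C^{1,1}$ from above'' at $x$, enough to make each $L_{\alpha\beta}u(x)$ a well-defined element of $(-\infty,+\infty]$ and hence $Iu(x)$ well-defined) — we get $Iu(x)\geq Iv(x)\geq f(x)$ by monotonicity of each $L_{\alpha\beta}$ in $\delta$ and of $\inf\sup$.

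The main obstacle is the limiting argument in the second part: justifying $\mathcal{M}^-_{\mathcal{L}_0}(v-v_\varepsilon)(x)\to 0$ and the interchange of limit with the $\inf$--$\sup$, i.e.\ making the approximation of the $C^{1,1}(x)$ barrier by smooth barriers quantitative enough that the near-$x$ contribution (weighted by $\int_0^\rho \frac{r}{\varphi(r)}\,dr=\uC_\varphi(\rho)$, which is small for small $\rho$ by Lemma \ref{l:wsc}) and the far contribution (handled by dominated convergence with the $\varphi$-dependent integrable tail $\oC_\varphi(\rho)<\infty$) are both controlled \emph{uniformly over the index set} $\{(\alpha,\beta)\}$. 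This uniformity is exactly where the comparability \eqref{a:K comp} with the single function $\varphi$ and the two-sided bounds of Lemma \ref{l:wsc} replace the role played by the explicit exponent $\sigma$ in \cite{CS}; everything else is a verbatim transcription of the Caffarelli--Silvestre argument.
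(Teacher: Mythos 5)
Your verification of axioms (i)--(iii) of Definition~\ref{d:elliptic op} is correct and follows the Caffarelli--Silvestre scheme, with the comparability \eqref{a:K comp} and Lemma~\ref{l:wsc} playing exactly the role you describe; you also rightly flag the typo in the paper's statement of (iii). The paper itself does not write out a proof of Lemma~\ref{l:classical}, referring to \cite[Sections 3 and 4]{CS}, so a self-contained argument is a reasonable thing to attempt.

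There is, however, a genuine gap in the last step of your ``touching'' argument, and it is a sign error that cannot be patched in place. You correctly observe that $v\geq u$ everywhere with $v(x)=u(x)$ forces $\delta(v,x,y)\geq\delta(u,x,y)$ for all $y$, and that each $L_{\alpha\beta}$ is monotone in $\delta$. But this gives $L_{\alpha\beta}v(x)\geq L_{\alpha\beta}u(x)$ for every $\alpha,\beta$ and hence $Iv(x)\geq Iu(x)$, which is the \emph{opposite} of the $Iu(x)\geq Iv(x)$ you invoke; so from $Iv(x)\geq f(x)$ you cannot conclude $Iu(x)\geq f(x)$. (Relatedly: since $u$ is touched from above, it is $\delta^+(u,x,\cdot)$ that is controlled, not $\delta^-$; a priori each $L_{\alpha\beta}u(x)$ lies in $[-\infty,+\infty)$, not $(-\infty,+\infty]$ as you wrote.) There is a secondary soft spot earlier: the existence of $C^2$ barriers $\psi_\varepsilon$ squeezed between $u$ and $\psi$ near $x$, equal to $u(x)$ at $x$ and strictly above $u$ nearby, is not evident, because the lower paraboloid associated to $\psi$'s $C^{1,1}(x)$ modulus may fall below $u$.

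What is missing is the \emph{shrinking-neighborhood} argument that \cite{CS} actually uses, and this is also where the quantity $\uC_\varphi$ enters. Since $\psi\in C^{1,1}(x)$ touches $u$ from above, the paraboloid $P(y)=u(x)+\nabla\psi(x)\cdot(y-x)+M|y-x|^2$ is $C^2$, satisfies $u\leq\psi\leq P$ near $x$ and $P(x)=u(x)$, and thus is an admissible test function for Definition~\ref{d:vis} on \emph{every} small $B_r(x)$. Set $v_r=P$ on $B_r(x)$ and $v_r=u$ elsewhere; the viscosity condition gives $Iv_r(x)\geq f(x)$ for all small $r$. Now $v_r-u\geq0$ is supported in $B_r(x)$, $(v_r-u)(x)=0$, and $0\leq(v_r-u)(z)\leq 2M|z-x|^2$, so by symmetry of the kernels and \eqref{a:K comp}, for every $L\in\mathcal{L}_0$,
\begin{align*}
0\ \leq\ Lv_r(x)-Lu(x)\ =\ 2\int_{B_r(x)}(v_r-u)(z)K(z-x)\,dz\ \leq\ 4M\Lambda\, n\omega_n\, C_\varphi\,\uC_\varphi(r),
\end{align*}
and the right-hand side tends to $0$ as $r\to0$ \emph{uniformly in $L$} (indeed $\uC_\varphi(r)\leq \tfrac{a}{2-\os}\tfrac{r^2}{\varphi(r)}\leq \tfrac{a^2}{\varphi(1)}r^{2-\os}\to 0$ by \eqref{a:wsc} and \eqref{e:wsc 0}). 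In particular each $Lu(x)$ is finite, $Iu(x)$ is classically defined, $|Iu(x)-Iv_r(x)|\to0$ by the $1$-Lipschitz property of $\inf\sup$ you already used, and so $Iu(x)\geq f(x)$. Your fixed-neighborhood version cannot supply this last step, because with $N$ fixed the gap $Iv(x)-Iu(x)$ has no reason to be small.
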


In \cite{CS} stability properties of viscosity solutions to the elliptic integro-differential equations with respect to the natural limits for lower-semicontinuous functions were proved. This type of limit is usually called a $\Gamma$-limit.

\begin{definition}
A sequence of lower-semicontinuous function $u_k$ $\Gamma${\it-converge} to $u$ in a set $\Omega$ if the followings hold:
\begin{enumerate}
\item[(i)] For every sequence $x_k \rightarrow x$ in $\Omega$,
\begin{align*}
\liminf_{k \rightarrow \infty} u_k(x_k) \geq u(x).
\end{align*}
\item[(ii)] For every $x \in \Omega$, there is a sequence $x_k \rightarrow x$ in $\Omega$ such that
\begin{align*}
\limsup_{k \rightarrow \infty} u_k(x_k) = u(x).
\end{align*}
\end{enumerate}
\end{definition}

Note that a uniformly convergent sequence $u_k$ also converges in the $\Gamma$ sense. We refer to \cite{CS} for the proof of the following lemma.

\begin{lemma}
Let $I$ be elliptic in the sense of Definition \ref{d:elliptic op} and $u_k$ be a sequence of functions that are uniformly bounded in $\Rn$ such that
\begin{enumerate}
\item[(i)] $Iu_k \leq f_k$ in $\Omega$ in the viscosity sense,
\item[(ii)] $u_k \rightarrow u$ in the $\Gamma$-sense in $\Omega$,
\item[(iii)] $u_k \rightarrow u$ a.e. in $\Rn$,
\item[(iv)] $f_k \rightarrow f$ locally uniformly in $\Omega$ for some continuous function $f$.
\end{enumerate}
Then $Iu \leq f$ in $\Omega$ in the viscosity sense.
\end{lemma}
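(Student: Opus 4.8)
The plan is to follow the classical stability argument for second-order equations (see \cite{CS}) and adapt it to the present nonlocal framework, being careful with the tail of the integral operator.

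\emph{Reduction.} By Definition~\ref{d:vis} it suffices to fix a $C^2$ function $\psi$ that touches $u$ strictly from below at some $x_0\in\Omega$ in a neighborhood $N$ and to prove $Iv(x_0)\le f(x_0)$ for $v:=\psi\chi_N+u\chi_{\Rn\setminus N}$. First I would shrink $N$ to a ball $B_r(x_0)$ with $\overline{B_r(x_0)}\subset N\cap\Omega$: the completion $v_r:=\psi\chi_{B_r(x_0)}+u\chi_{\Rn\setminus B_r(x_0)}$ satisfies $v\le v_r$ with $v(x_0)=v_r(x_0)$ (because $\psi<u$ on $N\setminus\{x_0\}$), hence $\delta(v-v_r,x_0,\cdot)\le 0$, hence $\mathcal M^+_{\mathcal L}(v-v_r)(x_0)\le 0$, and so $Iv(x_0)\le Iv_r(x_0)$ by condition (iii) of Definition~\ref{d:elliptic op}; thus it is enough to establish $Iv_r(x_0)\le f(x_0)$, and from now on I write $v:=v_r$ and work with this ball.

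\emph{Localization along the sequence.} Since each $u_k$ is lower semicontinuous, $c_k:=\min_{\overline{B_r(x_0)}}(u_k-\psi)$ is attained at some $x_k$. Using a recovery sequence at $x_0$ (condition (ii) of $\Gamma$-convergence) gives $\limsup_k c_k\le 0$, and using condition (i) along a subsequential limit of $x_k$ together with the \emph{strictness} of the touching forces $x_k\to x_0$ and $c_k\to 0$. Now fix $\varepsilon>0$. The $C^2$ function $\psi_k^\varepsilon:=\psi+c_k-\varepsilon|x-x_k|^2$ touches $u_k$ strictly from below at $x_k$ inside $B_r(x_0)$, so applying Definition~\ref{d:vis} to the supersolution $u_k$ (valid once $x_k\in\Omega$, i.e.\ for all large $k$) yields
\begin{align*}
Iv_k^\varepsilon(x_k)\le f_k(x_k),\qquad v_k^\varepsilon:=\psi_k^\varepsilon\chi_{B_r(x_0)}+u_k\chi_{\Rn\setminus B_r(x_0)}.
\end{align*}

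\emph{Passage to the limit.} Put $v^\varepsilon:=(\psi-\varepsilon|x-x_0|^2)\chi_{B_r(x_0)}+u\chi_{\Rn\setminus B_r(x_0)}$, which is bounded in $\Rn$ and $C^2$ near $x_0$, so $Iv^\varepsilon$ is continuous there by condition (ii) of Definition~\ref{d:elliptic op}. By condition (iii), $Iv^\varepsilon(x_k)-Iv_k^\varepsilon(x_k)\le\mathcal M^+_{\mathcal L}(v^\varepsilon-v_k^\varepsilon)(x_k)$, so it remains to show $\mathcal M^+_{\mathcal L}(v^\varepsilon-v_k^\varepsilon)(x_k)\to 0$ — which is exactly why I compare $v_k^\varepsilon$ with $v^\varepsilon$ rather than with $v$. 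Indeed, on $B_r(x_0)$ the difference $v^\varepsilon-v_k^\varepsilon=-c_k+\varepsilon(|x-x_k|^2-|x-x_0|^2)$ is \emph{affine} in $x$ (the quadratic terms cancel), so $\delta(v^\varepsilon-v_k^\varepsilon,x_k,y)=0$ for all $y$ with $x_k\pm y\in B_r(x_0)$; in particular the contribution of small $y$ to each integral $\int_{\Rn}\delta(v^\varepsilon-v_k^\varepsilon,x_k,y)K_L(y)\,dy$ vanishes, and for $k$ large $\mathcal M^+_{\mathcal L}(v^\varepsilon-v_k^\varepsilon)(x_k)\le\sup_{L\in\mathcal L}\int_{|y|\ge r/2}|\delta(v^\varepsilon-v_k^\varepsilon,x_k,y)|\,K_L(y)\,dy$. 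Since $v^\varepsilon-v_k^\varepsilon\to 0$ uniformly on $B_r(x_0)$ (the affine term) and $v^\varepsilon-v_k^\varepsilon=u-u_k\to 0$ a.e.\ with a uniform bound outside $B_r(x_0)$, after translating the integration variable and using \eqref{a:K comp} the integrands are dominated — uniformly in $k$ and $L$ — by a fixed multiple of $|z-x_0|^{-n}\varphi(|z-x_0|)^{-1}$ on $\{|z-x_0|\ge r/4\}$, which is integrable there by Lemma~\ref{l:wsc}; dominated convergence gives the claim. Therefore $\liminf_k Iv_k^\varepsilon(x_k)\ge\liminf_k Iv^\varepsilon(x_k)=Iv^\varepsilon(x_0)$, while $Iv_k^\varepsilon(x_k)\le f_k(x_k)\to f(x_0)$ by locally uniform convergence of $f_k$ and continuity of $f$; hence $Iv^\varepsilon(x_0)\le f(x_0)$. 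Finally I let $\varepsilon\to 0$: since $v^\varepsilon-v=-\varepsilon|x-x_0|^2\chi_{B_r(x_0)}\le 0$ with $\delta(v^\varepsilon-v,x_0,y)=-2\varepsilon|y|^2\chi_{\{|y|<r\}}$, condition (iii) together with Lemma~\ref{l:wsc} gives $0\le Iv(x_0)-Iv^\varepsilon(x_0)\le 2\varepsilon\,n\omega_n\Lambda C_\varphi\uC_\varphi(r)\to 0$, so $Iv(x_0)\le f(x_0)$. Since $\psi$, $x_0$ and the neighborhood were arbitrary, this is precisely the assertion that $Iu\le f$ in $\Omega$ in the viscosity sense.

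\emph{Main obstacle.} The delicate step is the double limit in the last paragraph: one must (a) neutralize the second-order perturbation needed to make the comparison functions touch strictly — this is what the affine cancellation achieves — and (b) pass to the limit in the nonlocal tail under the weak hypothesis that $u_k\to u$ only almost everywhere while the base point $x_k$ is moving, which is handled by translating the integration variable and invoking dominated convergence with the uniform kernel bound \eqref{a:K comp}. The remaining ingredients — lower semicontinuity, the $\Gamma$-convergence bookkeeping producing $x_k\to x_0$, and the elementary monotonicity $w\le w'$ with $w(x_0)=w'(x_0)\Rightarrow Iw(x_0)\le Iw'(x_0)$ — are routine.
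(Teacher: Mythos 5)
Your proof is correct and follows the classical Caffarelli--Silvestre stability argument, which is exactly what the paper refers to for this lemma (the paper does not write out a proof and instead cites \cite{CS}). All the key mechanisms you identify — shrinking the touching neighborhood to a ball and using the ellipticity inequality to transfer the conclusion, the $\Gamma$-convergence bookkeeping producing $x_k\to x_0$ and $c_k\to 0$, the $\varepsilon$-paraboloid to restore strict touching for $u_k$, the affine cancellation on $B_r(x_0)$ that kills the near-diagonal contribution, the change of variables and uniform kernel domination from \eqref{a:K comp} together with the weak scaling condition that make dominated convergence work for the tail, and finally sending $\varepsilon\to 0$ via the explicit bound $2\varepsilon n\omega_n\Lambda C_\varphi\uC_\varphi(r)$ — are precisely the ingredients in the referenced proof, adapted correctly to the kernel bounds used in this paper.
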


\begin{corollary}
Let $I$ be elliptic in the sense of Definition \ref{d:elliptic op} and $u_k$ be a sequence of functions that are uniformly bounded in $\Rn$ such that
\begin{enumerate}
\item[(i)] $Iu_k = f_k$ in $\Omega$ in the viscosity sense,
\item[(ii)] $u_k \rightarrow u$ locally uniformly in $\Omega$,
\item[(iii)] $u_k \rightarrow u$ a.e. in $\Rn$,
\item[(iv)] $f_k \rightarrow f$ locally uniformly in $\Omega$ for some continuous function $f$.
\end{enumerate}
Then $Iu = f$ in $\Omega$ in the viscosity sense.
\end{corollary}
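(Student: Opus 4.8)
The plan is to obtain the two viscosity inequalities $Iu \le f$ and $Iu \ge f$ separately, each by a direct application of the stability lemma for $\Gamma$-limits stated just above. First observe that since $Iu_k = f_k$ in the viscosity sense, each $u_k$ is both upper- and lower-semicontinuous in $\overline\Omega$, hence continuous there; the locally uniform convergence $u_k \to u$ then makes $u$ continuous in $\Omega$ and, as noted right after the definition of $\Gamma$-convergence, forces $u_k \to u$ in the $\Gamma$-sense in $\Omega$ (and likewise $-u_k \to -u$ in the $\Gamma$-sense, since this convergence is also locally uniform). Thus hypotheses (ii)--(iv) of the preceding lemma hold both for $(u_k, f_k)$ and for $(-u_k, -f_k)$, and all the $u_k$, $-u_k$ are uniformly bounded in $\Rn$.

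Since $Iu_k = f_k$ gives in particular $Iu_k \le f_k$ in $\Omega$ in the viscosity sense, the preceding lemma applied verbatim to $u_k$, $f_k$ yields $Iu \le f$ in $\Omega$ in the viscosity sense. This is one half of the conclusion.

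For the reverse inequality I would pass to the \emph{dual operator} $\widetilde I v := -I(-v)$. Two facts are needed. \emph{(a) $\widetilde I$ is again elliptic with respect to the same class $\mathcal L$.} Properties (i) and (ii) of Definition \ref{d:elliptic op} are immediate, since $-v$ is bounded, $C^{1,1}(x)$, or $C^2$ on $\Omega$ exactly when $v$ is. For property (iii), the linearity of each $L \in \mathcal L$ gives $-\mathcal M^+_{\mathcal L}(w) = \inf_{L \in \mathcal L} L(-w) = \mathcal M^-_{\mathcal L}(-w)$ and, symmetrically, $-\mathcal M^-_{\mathcal L}(w) = \mathcal M^+_{\mathcal L}(-w)$; combining these with property (iii) for $I$ evaluated on $-u$ and $-v$ yields $\mathcal M^-_{\mathcal L}(u-v)(x) \le \widetilde I u(x) - \widetilde I v(x) \le \mathcal M^+_{\mathcal L}(u-v)(x)$. \emph{(b) For a lower-semicontinuous $w$, the relation $I(-w) \ge g$ in the viscosity sense is equivalent to $\widetilde I w \le -g$ in the viscosity sense.} Indeed, a $C^2$-function $\psi$ touches $-w$ from above at $x$ if and only if $-\psi$ touches $w$ from below at $x$; the function obtained by gluing $\psi$ on a neighborhood $N$ to $-w$ off $N$ is the negative of the one obtained by gluing $-\psi$ to $w$; and since $\widetilde I v = -I(-v)$, the requirement ``$Iv(x) \ge g(x)$'' in the definition of $I(-w) \ge g$ is literally the requirement ``$\widetilde I(\text{glued function})(x) \le -g(x)$'' in the definition of $\widetilde I w \le -g$.

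Now apply fact (b) with $w = u_k$, $g = f_k$: the subsolution property $Iu_k \ge f_k$ becomes $\widetilde I(-u_k) \le -f_k$ in $\Omega$ in the viscosity sense. By fact (a) and the first paragraph, the preceding lemma applies to the elliptic operator $\widetilde I$, the sequence $-u_k$, and the right-hand sides $-f_k \to -f$, and it gives $\widetilde I(-u) \le -f$ in $\Omega$ in the viscosity sense; fact (b) with $w = u$, $g = f$ turns this back into $Iu \ge f$ in $\Omega$ in the viscosity sense. Combined with the second paragraph this proves $Iu = f$ in $\Omega$ in the viscosity sense. The only genuinely delicate point I anticipate is bookkeeping the sign conventions of Definition \ref{d:vis} --- that ``$\le$'' refers to a supersolution touched from below and ``$\ge$'' to a subsolution touched from above --- so that the passage from ``$u_k$ is a viscosity subsolution of $I = f_k$'' to ``$-u_k$ is a viscosity supersolution of $\widetilde I = -f_k$'' is carried out without slips; everything else reduces to citing the preceding lemma.
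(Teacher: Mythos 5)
Your proposal is correct, and it takes essentially the route that the paper (implicitly, following \cite{CS}) intends: apply the preceding $\Gamma$-stability lemma verbatim to $u_k$, $f_k$ for the inequality $Iu \leq f$, and then to $-u_k$, $-f_k$ via the dual operator $\widetilde I v = -I(-v)$ for the inequality $Iu \geq f$; the observation that locally uniform convergence implies $\Gamma$-convergence (both for $u_k$ and for $-u_k$) is exactly the point the paper flags just after the definition of $\Gamma$-convergence. Your checks that $\widetilde I$ is again elliptic for the same class and that the viscosity-sense statements transform correctly under $v \mapsto -v$ are the right bookkeeping and are carried out correctly.
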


\begin{lemma}
Let $I$ be elliptic in the sense of Definition \ref{d:elliptic op}. Let $u$ and $v$ be bounded functions in $\Rn$ such that $Iu \geq f$ and $Iv \leq f$ in $\Omega$ in the viscosity sense for continuous functions $f$ and $g$. Then 
\begin{align*}
\mathcal{M}^+_\mathcal{L} (u - v) \geq f - g
\end{align*}
in $\Omega$ in the viscosity sense.
\end{lemma}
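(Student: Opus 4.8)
The plan is to follow the standard route for this ``difference of a sub- and a supersolution'' estimate from the second-order theory and its nonlocal counterpart in \cite{CS}: first establish the inequality directly from Definition \ref{d:elliptic op} and Lemma \ref{l:classical} for functions that are $C^{1,1}(x)$ at almost every point, and then bootstrap to the general case by sup-/inf-convolution together with the stability lemma above. Throughout, observe that by Definition \ref{d:vis} the function $u$ is upper semicontinuous and $v$ is lower semicontinuous, so $u-v$ is upper semicontinuous and the assertion makes sense; note also that $\mathcal{M}^+_\mathcal{L}=\sup_{L\in\mathcal{L}}L$ is itself an elliptic operator with respect to $\mathcal{L}$ in the sense of Definition \ref{d:elliptic op}, so the stability lemma (in its evident analogue for viscosity subsolutions, with convergence in the corresponding dual sense) may be applied to it. I also use that the upper bound in Definition \ref{d:elliptic op}(iii) reads $Iw_1(x)-Iw_2(x)\leq \mathcal{M}^+_\mathcal{L}(w_1-w_2)(x)$ whenever $w_1,w_2$ are bounded and $C^{1,1}(x)$.

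First I would introduce the sup-convolution $u^\varepsilon(x)=\sup_{z\in\Rn}\left(u(z)-\varepsilon^{-1}|x-z|^2\right)$ and the inf-convolution $v_\varepsilon(x)=\inf_{z\in\Rn}\left(v(z)+\varepsilon^{-1}|x-z|^2\right)$. These are continuous, bounded uniformly in $\varepsilon$, semiconvex respectively semiconcave (hence $C^{1,1}(x)$ at almost every $x$ by Alexandrov's theorem), and they converge monotonically to $u$ respectively $v$, pointwise and locally uniformly on the continuity set of $u$ respectively $v$; in particular $u^\varepsilon-v_\varepsilon$ is semiconvex and converges to $u-v$ almost everywhere in $\Rn$ and in the (dual) $\Gamma$-sense in $\Omega$. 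Since each $u^\varepsilon$ is a supremum of translates of $u$ plus constants, and $I$ is translation invariant and unaffected by adding constants, the usual argument (as in \cite{CS}, where the global boundedness of $u$ is what controls the nonlocal tails) shows that $u^\varepsilon$ is a viscosity subsolution of $Iu^\varepsilon\geq f_\varepsilon$ in $\Omega_\varepsilon$ and, symmetrically, $v_\varepsilon$ a viscosity supersolution of $Iv_\varepsilon\leq g_\varepsilon$ in $\Omega_\varepsilon$, where $\Omega_\varepsilon\Subset\Omega$ exhausts $\Omega$ as $\varepsilon\to0$ and $f_\varepsilon\to f$, $g_\varepsilon\to g$ locally uniformly.

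Next, fix $\varepsilon$ and let $x\in\Omega_\varepsilon$ be a point at which both $u^\varepsilon$ and $v_\varepsilon$ are $C^{1,1}(x)$; this holds for almost every $x$. Touching $u^\varepsilon$ from above at $x$ by the paraboloid $P_x+\eta|\cdot-x|^2$, with $P_x$ the second-order Taylor polynomial of $u^\varepsilon$ at $x$ and $\eta>0$ arbitrary, and invoking Lemma \ref{l:classical}, we get that $Iu^\varepsilon(x)$ is classically defined with $Iu^\varepsilon(x)\geq f_\varepsilon(x)$; symmetrically $Iv_\varepsilon(x)$ is classically defined with $Iv_\varepsilon(x)\leq g_\varepsilon(x)$. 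Since $u^\varepsilon-v_\varepsilon\in C^{1,1}(x)$, the upper bound in Definition \ref{d:elliptic op}(iii) gives
\[
\mathcal{M}^+_\mathcal{L}(u^\varepsilon-v_\varepsilon)(x)\ \geq\ Iu^\varepsilon(x)-Iv_\varepsilon(x)\ \geq\ f_\varepsilon(x)-g_\varepsilon(x)
\]
for almost every $x\in\Omega_\varepsilon$. Because $u^\varepsilon-v_\varepsilon$ is semiconvex, this almost-everywhere classical inequality upgrades to the viscosity inequality $\mathcal{M}^+_\mathcal{L}(u^\varepsilon-v_\varepsilon)\geq f_\varepsilon-g_\varepsilon$ in $\Omega_\varepsilon$: given a $C^2$ function touching $u^\varepsilon-v_\varepsilon$ from above, Jensen's lemma produces nearby points of twice-differentiability at which the inequality holds classically and with controlled first- and second-order data, and one passes to the touching point using monotonicity of the nonlocal part of $\mathcal{M}^+_\mathcal{L}$ under the resulting ordering of the functions and of their Hessians.

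Finally I would apply the stability lemma for viscosity subsolutions to the elliptic operator $\mathcal{M}^+_\mathcal{L}$: the functions $u^\varepsilon-v_\varepsilon$ are uniformly bounded viscosity subsolutions of $\mathcal{M}^+_\mathcal{L}\,\cdot=f_\varepsilon-g_\varepsilon$ in $\Omega_\varepsilon$, they converge to $u-v$ almost everywhere and in the (dual) $\Gamma$-sense, and $f_\varepsilon-g_\varepsilon\to f-g$ locally uniformly, so $\mathcal{M}^+_\mathcal{L}(u-v)\geq f-g$ in $\Omega$ in the viscosity sense. The main obstacle is the regularization step: checking that the sup-convolution of a bounded viscosity subsolution remains a viscosity subsolution of the nonlocal equation (the far-away contributions to the integral must be controlled, which is precisely where global boundedness enters) and, above all, the passage from the almost-everywhere classical inequality for the semiconvex function $u^\varepsilon-v_\varepsilon$ to the viscosity inequality via Jensen's lemma; the remaining steps are bookkeeping with Definition \ref{d:elliptic op}, Lemma \ref{l:classical}, and the stability lemma already available.
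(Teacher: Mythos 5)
Your overall strategy — regularize via sup- and inf-convolution, establish the classical inequality at points where both convolutions are $C^{1,1}$, and pass to the limit with the stability lemma — is essentially the route taken in \cite{CS} (Theorem 5.9 there), to which the paper silently defers for this lemma. The one substantive divergence is how you upgrade the classical inequality to the viscosity inequality: you restrict to a.e.\ points of twice-differentiability (Alexandrov) and invoke Jensen's lemma, which in the nonlocal setting requires a limiting argument for $\mathcal{M}^+_\mathcal{L}$ that you sketch but do not carry out. The integral term depends on the global profile of $u^\varepsilon - v_\varepsilon$, not merely on second-order jets, so the second-order version of the Jensen argument does not transfer verbatim; ``monotonicity of the nonlocal part under the resulting ordering'' is a gesture, not a proof, and this is precisely the delicate point.

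This detour is actually unnecessary. Whenever a $C^2$ function $\phi$ touches $u^\varepsilon - v_\varepsilon$ from above at some $x_0$, the semiconvexity of $u^\varepsilon$ (touched from below by a paraboloid of fixed opening at every point) and the semiconcavity of $v_\varepsilon$ sandwich each of $u^\varepsilon$ and $v_\varepsilon$ between $C^2$ functions touching at $x_0$: from $u^\varepsilon \leq v_\varepsilon + \phi$ with $v_\varepsilon + \phi$ semiconcave, and the paraboloid from below, one gets $u^\varepsilon \in C^{1,1}(x_0)$, and symmetrically for $v_\varepsilon$. So the $C^{1,1}$ property holds at \emph{every} touching point, not just almost everywhere. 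Lemma \ref{l:classical} then gives $Iu^\varepsilon(x_0) \geq f_\varepsilon(x_0)$ and $Iv_\varepsilon(x_0) \leq g_\varepsilon(x_0)$ classically, Definition \ref{d:elliptic op}(iii) gives $\mathcal{M}^+_\mathcal{L}(u^\varepsilon - v_\varepsilon)(x_0) \geq f_\varepsilon(x_0) - g_\varepsilon(x_0)$, and replacing $u^\varepsilon - v_\varepsilon$ by the test function near $x_0$ only increases each $L \in \mathcal{L}$ at $x_0$, hence the viscosity inequality — with no passage to nearby good points and no Jensen needed. The remaining steps of your write-up (the regularization, the role of global boundedness in controlling the tails, the $\Gamma$-stability at the end) match what \cite{CS} does.
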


We check the Assumption 5.1 in \cite{CS} is true when $I$ is an elliptic operator with respect to $\mathcal{L}_0$ to prove the comparison principle. See also \cite[Lemma 5.10]{CS}.

\begin{theorem} [Comparison principle]
Let $I$ be an elliptic operator with respect to $\mathcal{L}_0$ in the sense of Definition \ref{d:elliptic op}. Let $\Omega \in \Rn$ be a bounded open set. If $u$ and $v$ are bounded functions in $\Rn$ such that $Iu \geq f$ and $Iv \leq f$ in $\Omega$ in the viscosity sense for some continuous function $f$, and $u \leq v$ in $\Rn \setminus \Omega$, then $u \leq v$ in $\Omega$.
\end{theorem}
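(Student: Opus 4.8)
The plan is to adapt the standard doubling-of-variables argument for comparison principles in the viscosity setting, following the scheme that Caffarelli and Silvestre set up (their Assumption 5.1 and Lemma 5.10). First I would argue by contradiction: suppose $\sup_{\Omega}(u-v) =: \theta > 0$. Since $u$ is upper semicontinuous, $v$ is lower semicontinuous, and $\overline{\Omega}$ is compact with $u \le v$ on $\Rn \setminus \Omega$, the supremum of $u - v$ over $\overline\Omega$ is attained at some interior point, and equals $\theta$. The difficulty in the nonlocal setting is that one cannot localize: the operators $L_{\alpha\beta}$ see the values of $u$ and $v$ on all of $\Rn$, so one must keep track of the global information while still extracting a pointwise differential inequality at the contact point.

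The key device is to double variables with a penalization: for $\varepsilon > 0$ consider
\begin{align*}
\Phi_\varepsilon(x,y) = u(x) - v(y) - \frac{1}{2\varepsilon}|x-y|^2,
\end{align*}
which attains its maximum over $\overline\Omega \times \overline\Omega$ at some point $(x_\varepsilon, y_\varepsilon)$. Standard viscosity-solution lemmas give that, along a subsequence, $x_\varepsilon, y_\varepsilon \to z_0$ for a maximum point $z_0$ of $u-v$, that $|x_\varepsilon - y_\varepsilon|^2/\varepsilon \to 0$, and that $u(x_\varepsilon) - v(y_\varepsilon) \to \theta$; in particular $x_\varepsilon, y_\varepsilon \in \Omega$ for $\varepsilon$ small. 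The function $x \mapsto v(y_\varepsilon) + \frac{1}{2\varepsilon}|x - y_\varepsilon|^2 + \text{const}$ is a smooth (hence $C^{1,1}$) function touching $u$ from above at $x_\varepsilon$, so by Lemma \ref{l:classical} the equation holds classically there: $I\tilde u(x_\varepsilon) \ge f(x_\varepsilon)$, where $\tilde u$ agrees with the test paraboloid near $x_\varepsilon$ and with $u$ elsewhere; symmetrically the corresponding statement holds for $v$ at $y_\varepsilon$. Subtracting and using the ellipticity property (iii) of Definition \ref{d:elliptic op} — or more precisely the sup/inf structure of $I$ in \eqref{e:I} together with the bound $\inf_{\alpha\beta} L_{\alpha\beta} w \le Iw_1 - Iw_2 \le \sup_{\alpha\beta} L_{\alpha\beta} w$ for suitable $w$ — one bounds $f(x_\varepsilon) - f(y_\varepsilon)$ below by $\mathcal{M}^-_{\mathcal{L}_0}$ applied to the difference of the two modified functions, evaluated at the two points.

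The main obstacle, and the heart of the matter, is showing that this lower bound forces a contradiction, i.e. that the nonlocal extremal term is strictly negative (bounded away from $0$) while $f(x_\varepsilon) - f(y_\varepsilon) \to 0$ by continuity of $f$. This splits into a near-field part and a far-field part. On a small ball $B_\rho$ around the origin, one uses that the quadratic test functions have bounded second derivatives (of order $1/\varepsilon$) but the increments $\delta(\cdot,\cdot,y)$ of the difference involve the semi-jet information coming from the maximum of $\Phi_\varepsilon$; here one invokes that $u(x_\varepsilon + y) - v(y_\varepsilon + y) \le u(x_\varepsilon) - v(y_\varepsilon) + (\text{lower-order in }\varepsilon)$ because $(x_\varepsilon, y_\varepsilon)$ maximizes $\Phi_\varepsilon$, so that $\delta$ of the difference is $\le 0$ up to a controlled error, making the positive part of $\delta$ small and the negative part available. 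Outside $B_\rho$, the kernel $K$ is bounded by \eqref{a:K comp} with the integrable tail controlled by $\oC_\varphi(\rho)$ (finite by \eqref{e:wsc infty}), and since $\us \ge \sigma_0 > 0$ these tail integrals are uniformly controlled; combined with $u - v \le \theta$ globally and $u-v < \theta$ away from the contact point, one sees the net nonlocal contribution is $\le -c < 0$ for a constant $c$ independent of $\varepsilon$. Letting $\varepsilon \to 0$ then yields $0 \le \lim (f(x_\varepsilon) - f(y_\varepsilon)) \le -c < 0$, a contradiction, so $\theta \le 0$ and $u \le v$ in $\Omega$. I would remark that, as in \cite{CS}, all one really needs to verify is that $\mathcal{L}_0$ satisfies Assumption 5.1 of \cite{CS} — nondegeneracy of the kernels near the origin and integrability of the tails — after which the comparison principle follows from the general machinery; the estimates above are precisely that verification.
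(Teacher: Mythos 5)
Your high-level picture --- that in a bounded domain $\Omega$ the nonlocal tail sees $u-v$ dropping strictly below its interior supremum outside $\Omega$, and this rescues the comparison principle --- is correct, but the proposal conflates two distinct strategies. The paper does not re-prove the doubling-of-variables machinery of \cite{CS}; it simply observes that the proof of Theorem 5.2 there applies verbatim as soon as Assumption 5.1 is verified for the class $\mathcal{L}_0$. That assumption is not the generic ``nondegeneracy near the origin and integrability of tails'' you describe; it is the concrete barrier statement that for the fixed auxiliary function $w_R(x) = \min\lb 1, \ve x \ve^2/(4R^2) \rb$, every $L \in \mathcal{L}_0$ satisfies $Lw_R > \delta(R) > 0$ in $B_R$. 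The paper's entire proof is the short verification: for $x \in B_R$, $\delta(w_R,x,y) = \ve y \ve^2/(2R^2)$ if $x\pm y \in B_{2R}$ and $\delta(w_R,x,y)\ge 0$ otherwise, whence
\begin{align*}
Lw_R(x) \geq C_\varphi \lambda \int_{B_R} \frac{\ve y \ve^2}{2R^2} \frac{dy}{\ve y \ve^n \varphi(\ve y \ve)} =: \delta(R) > 0.
\end{align*}

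None of the estimates you outline touch the function $w_R$, so your closing claim that ``the estimates above are precisely that verification'' is not accurate --- what you sketch is the argument that Assumption 5.1 \emph{feeds into}, not the assumption itself. If you do intend to carry out the full doubling argument from scratch rather than cite \cite{CS}, the sketch needs tightening: the step ``one sees the net nonlocal contribution is $\le -c<0$ for a constant $c$ independent of $\varepsilon$'' is precisely where boundedness of $\Omega$ and the finiteness of $\oC_\varphi$ get used and deserves an explicit estimate, and the near-field sentence invoking ``semi-jet information coming from the maximum of $\Phi_\varepsilon$'' is too loose to stand as a proof step. Either route can be made to work, but they are genuinely different proofs; the paper takes the short one.
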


\begin{proof}
The proof is the same as one for Theorem 5.2 in \cite{CS} if the Assumption 5.1 in \cite{CS} is provided. We claim that for every $R \geq 4$, there exists a constant $\delta = \delta(R) > 0$ such that $Lw_R > \delta$ in $B_R$ for any operator $L \in \mathcal{L}_0$, where $w_R(x) = \min \lbrace 1, \frac{\ve x \ve^2}{4R^2} \rbrace$. Indeed, for $x \in B_R$ we have
\begin{align*}
\delta(w_R, x, y) = \frac{\ve x+y \ve^2}{4R^2} + \frac{\ve x-y \ve^2}{4R^2} - \frac{2\ve x \ve^2}{4R^2} = \frac{\ve y \ve^2}{2R^2} \quad \text{if} ~ x \pm y \in B_{2R}
\end{align*}
and
\begin{align*}
\delta(w_R, x, y) \geq 1 - \frac{\ve x \ve^2}{2R^2} \geq 0 \quad \text{if} ~ x+y \not \in B_{2R} ~ \text{or} ~ x-y \not \in B_{2R}.
\end{align*}
Thus for any operator $L \in \mathcal{L}_0$ we obtain
\begin{align*}
Lw_R (x) \geq C_\varphi \lambda \int_{B_R} \frac{\ve y \ve^2}{2R^2} \frac{dy}{\ve y \ve^n \varphi (\ve y \ve)} := \delta(R) > 0,
\end{align*}
which proves the claim.
\end{proof}

\section{Regularity Results} \label{s:4 regularity}

In this section we prove Harnack inequality and H\"older regularity estimates for viscosity solutions of fully nonlinear elliptic integro-differential equations. From now on we will consider the class $\mathcal{L}_0$.

\subsection{Aleksandrov-Bakelman-Pucci(ABP) Estimates} \label{s:4.1 ABP}

We start this section with an ABP estimate which generalizes \cite[Theorem 8.7]{CS}. It is a fundamental tool in the proof of the Harnack inequality.

For a function $u$ that is not positive outside the ball $B_R$ we consider the concave envelope $\Gamma$ of $u^+$ in $B_{3R}$, which is defined by
\begin{align*}
\Gamma(x) :=
\begin{cases}
\min \lb p(x) : p ~ \text{is a plane such that} ~ p \geq u^+ ~ \text{in} ~ B_{3R} \rb &\text{in} ~ B_{3R}, \\
0 &\text{in} ~ \Rn \setminus B_{3R}.
\end{cases}
\end{align*}
We will focus on the contact set $\lb u = \Gamma \rb \cap B_R$ in the following lemmas.

\begin{lemma} \label{l:4.1}
Assume that $\mathcal{M}_{\mathcal{L}_0}^+ u \geq - f$ in $B_R$ in the viscosity sense and that $u \leq 0$ in $\Rn \setminus B_R$. Let $\Gamma$ be the concave envelope of $u^+$ in $B_{3R}$. Let $\rho_0 = 2^{-8} n^{-1}$ and $r_k = \rho_0 2^{-1/(2-\us)-k} R$. Then there exists a uniform constant $C > 0$, depending only on $n, \lambda$, and $a$, such that for each $x \in \lb u = \Gamma \rb$ and $M > 0$, we find $k \geq 0$ satisfying
\begin{align} \label{e:set}
\lv A_k \rv \leq C \frac{f(x)}{M} \ve B_{r_k}(x) \setminus B_{r_{k+1}}(x) \ve,
\end{align}
where
\begin{align*}
A_k = \lb y \in B_{r_k}(x) \setminus B_{r_{k+1}}(x) : u(y) < u(x) + (y-x) \cdot \nabla \Gamma(x) - M \frac{\uC + \oC}{\uC(r_0)} r_k^2 \rb.
\end{align*}
Here $\nabla \Gamma$ stands for an element of the superdifferential of $\Gamma$ at $x$.
\end{lemma}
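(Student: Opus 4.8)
The plan is to follow the structure of the corresponding ABP argument in \cite[Theorem 8.7]{CS}, adapting every quantitative estimate to the variable-order framework via the bounds on $\uC$, $\oC$ and $C_\varphi$ established in Section \ref{s:2 Asymptotics}. Fix a contact point $x \in \lb u = \Gamma \rb$ and a constant $M > 0$, and argue by contradiction: assume that \eqref{e:set} fails for \emph{every} $k \geq 0$, so that
\begin{align*}
\lv A_k \rv > C \frac{f(x)}{M} \ve B_{r_k}(x) \setminus B_{r_{k+1}}(x) \ve \quad \text{for all } k \geq 0,
\end{align*}
with $C$ a large constant to be chosen. Since $x$ is a contact point, $\Gamma(x) = u(x) = u^+(x)$, and any $p \in \partial^+ \Gamma(x)$ satisfies $u(y) \le \Gamma(y) \le u(x) + (y-x)\cdot p$ throughout $B_{3R}$ and $u(y) \le 0 \le u(x) + (y-x)\cdot p$ outside (because $\Gamma \ge u^+$ everywhere and $\Gamma$ vanishes off $B_{3R}$, while $|\nabla\Gamma(x)|$ is controlled by $\sup u^+/R$). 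Thus the function $\delta(u,x,y) = u(x+y)+u(x-y)-2u(x)$ is $\le 0$ for all $y$, and is very negative — at least $M\frac{\uC+\oC}{\uC(r_0)} r_k^2$ in absolute value — on the set $-A_k \cup A_k$ (after symmetrizing; one uses that the contact set / plane inequality gives a one-sided bound and then the set where $u$ dips below the plane by the prescribed amount contributes).

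The core computation is to feed this into the maximal operator. Because a $C^{1,1}$ (indeed $C^2$, after the standard touching argument from Lemma \ref{l:classical}) test plane touches $u$ from above at $x$, we have $\mathcal{M}^+_{\mathcal{L}_0} u(x) \ge -f(x)$ in the classical sense, i.e.
\begin{align*}
-f(x) \le C_\varphi \int_{\Rn} \frac{\Lambda \, \delta(u,x,y)^+ - \lambda\, \delta(u,x,y)^-}{\ve y\ve^n \varphi(\ve y\ve)}\, dy = -C_\varphi \lambda \int_{\Rn} \frac{\delta(u,x,y)^-}{\ve y\ve^n \varphi(\ve y\ve)}\, dy,
\end{align*}
since $\delta(u,x,y) \le 0$. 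Restricting the integral to the annuli $B_{r_k}(x)\setminus B_{r_{k+1}}(x)$ and using that on $A_k$ one has $\delta(u,x,y)^- \gtrsim M\frac{\uC+\oC}{\uC(r_0)} r_k^2$ while $\ve y\ve \le r_k$ forces $\varphi(\ve y\ve) \le a\,\varphi(r_k) (r_k/\ve y\ve)^{-\us} \le$ (a suitable bound), we get, for each $k$,
\begin{align*}
f(x) \ge C_\varphi \lambda \, M \frac{\uC+\oC}{\uC(r_0)} r_k^2 \int_{A_k} \frac{dy}{\ve y\ve^n \varphi(\ve y\ve)} \gtrsim C_\varphi \lambda\, M \frac{\uC+\oC}{\uC(r_0)} \frac{r_k^2}{\varphi(r_k)} \cdot \frac{\lv A_k\rv}{\lv B_{r_k}(x)\setminus B_{r_{k+1}}(x)\rv}.
\end{align*}
By Lemma \ref{l:wsc}, $\frac{r_k^2}{\varphi(r_k)} \gtrsim \uC_\varphi(r_k)$, and by the geometric choice $r_k = \rho_0 2^{-1/(2-\us)-k}R$ together with the ratio estimate \eqref{e:wsc ratio} one controls $\uC_\varphi(r_0)/\uC_\varphi(r_k)$ — this is exactly where the exponent $2^{-1/(2-\us)}$ is engineered so that the telescoping of $\uC_\varphi$-ratios converges. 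Also $C_\varphi(\uC+\oC) \gtrsim c_1(n)$ by Lemma \ref{l:bound}. Combining, the contradiction hypothesis $\lv A_k\rv > C\frac{f(x)}{M}\lv B_{r_k}(x)\setminus B_{r_{k+1}}(x)\rv$ would force $f(x) \gtrsim C \cdot c_1(n)\lambda\, f(x)$, which is impossible once $C$ is chosen large enough depending only on $n, \lambda, a$; hence \eqref{e:set} must hold for some $k$.

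The main obstacle — and the place where the variable-order setting genuinely differs from \cite{CS} — is the bookkeeping of $\varphi$-dependent scales: one must verify that $r_k^2/\varphi(r_k)$ and the normalizing factor $\frac{\uC+\oC}{\uC(r_0)}$ interact so that the estimate is genuinely \emph{uniform} in $\us, \os$ (only $\sigma_0$, via $\us \ge \sigma_0$, should enter). Concretely, I would prove an auxiliary inequality of the form $\uC_\varphi(r_k) \le (1 + a^2 2^{(k+1/(2-\us))(2-\us)})^{-1}\,\cdots$ — rather, iterate \eqref{e:wsc ratio} to bound $\uC_\varphi(r_0)/\uC_\varphi(r_k)$ from above by something summable/controlled, and separately show $r_0 = \rho_0 2^{-1/(2-\us)}R$ satisfies $\uC_\varphi(r_0) \ge c\,\uC_\varphi(\rho_0 R)$ with $c$ absolute (this is the role of the factor $2^{-1/(2-\us)}$, chosen precisely so that $\uC_\varphi(\rho_0 R)/\uC_\varphi(r_0) \le 1 + a^2 2^{-1} \cdot 2 \le 1 + a^2$, using $t = 2^{-1/(2-\us)}$ so $t^{-2+\us} = 2$). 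Once these scaling lemmas are in place, the rest is the standard contradiction-via-summing-annuli argument, and the constants track through exactly as in the fractional-Laplacian case.
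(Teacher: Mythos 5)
Your overall scaffolding matches the paper: work at a contact point where the classical inequality $\mathcal{M}^+_{\mathcal{L}_0}u(x)\geq -f(x)$ holds, observe $\delta(u,x,y)\leq 0$ everywhere (concavity inside $B_{3R}$, sign condition outside $B_R$), split the integral over the annuli $A_k-x$, and run a contradiction argument. The two places where your write-up deviates from what actually works, however, are not cosmetic.

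The decisive quantitative step is to show that
\[
\frac{1}{\uC(r_0)}\sum_{k\geq 0}\frac{r_k^2}{\varphi(r_k)} \geq c(a) > 0
\]
with $c(a)$ depending on $a$ only, not on $\us,\os$. You propose to bound $\frac{r_k^2}{\varphi(r_k)}\gtrsim \uC_\varphi(r_k)$ via Lemma~\ref{l:wsc} and then control $\uC_\varphi(r_0)/\uC_\varphi(r_k)$ by iterating \eqref{e:wsc ratio}. This does not close: the inequality $\uC_\varphi(r_k)\leq \frac{a}{2-\os}\frac{r_k^2}{\varphi(r_k)}$ from \eqref{e:wsc 0} hides a factor $2-\os$, which degenerates as $\os\to 2$, and iterating \eqref{e:wsc ratio} (or the lower bound $\frac{r_{k+1}^2}{\varphi(r_{k+1})}\geq a^{-1}2^{\us-2}\frac{r_k^2}{\varphi(r_k)}$) only produces a geometric series whose ratio is $a^{-1}2^{\us-2}$; for $a>1$ this stays bounded away from $1$ as $\us\to 2$, so the sum does not compensate the $2-\os$ loss. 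Concretely, with $\varphi(r)=r^\sigma$ and $a>1$ your chain of inequalities gives a lower bound $\sim \frac{(2-\sigma)}{a(1-a^{-1})}$ which tends to $0$ as $\sigma\to 2$, violating the required uniformity. The paper instead bounds $\uC(r_0)=\int_0^{r_0}\frac{r}{\varphi(r)}\,dr$ from above directly, slicing the integral over the dyadic intervals $[r_{k+1},r_k]$ and using the weak scaling condition $\varphi(r)\geq a^{-1}(r/r_k)^{\os}\varphi(r_k)$ on each slice to get $\int_{r_{k+1}}^{r_k}\frac{r}{\varphi(r)}\,dr\leq \frac{4a\,r_k^2}{\varphi(r_k)}$ and hence $\uC(r_0)\leq 4a\sum_k\frac{r_k^2}{\varphi(r_k)}$, with no $\os$-dependence whatsoever. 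That one line is the content of the lemma; without it (or an equivalent slicing estimate) the argument does not yield a uniform constant.

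Separately, you spend effort explaining the factor $2^{-1/(2-\us)}$ in $r_0=\rho_0 2^{-1/(2-\us)}R$ as being designed to make a telescoping of $\uC$-ratios converge. For this lemma that factor is irrelevant: the slicing estimate above works with any $r_0>0$. Its actual role is in Section~\ref{s:4.3 decay est}, where one needs $\uC(R)/\uC(r_0)\leq 1+2a^2/\rho_0^2$ uniformly via \eqref{e:wsc ratio}, and the exponent is chosen so that $(R/r_0)^{2-\us}=2\rho_0^{\us-2}$.
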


\begin{proof}
Let $x$ be a point such that $u(x) = \Gamma(x) > 0$. By Lemma \ref{l:classical}, $\mathcal{M}^+_{\mathcal{L}_0} u(x)$ is defined classically and $\mathcal{M}^+_{\mathcal{L}_0} u(x) \geq -f(x)$. Note that if $x \pm y \in B_{3R}$ then $\delta(u,x,y) \leq 0$ since $\Gamma$ is concave and lies above $u$. Moreover, if either $x+y \not \in B_{3R}$ or $x-y \not \in B_{3R}$ then $x \pm y \not \in B_R$, which implies $u(x + y) \leq 0$ and $u(x-y) \leq 0$. In any case we have $\delta(u,x,y) \leq 0$ and hence
\begin{align} \label{e:f>0}
- f(x) \leq \mathcal{M}_{\mathcal{L}_0}^+ u(x) = C_\varphi \int_{\Rn} \frac{-\lambda \delta^-(u,x,y)}{\ve y \ve^n \varphi(\ve y \ve)} \, dy.
\end{align}
We split the integral as
\begin{align*}
f(x) \geq C_\varphi \lambda \sum_{k=0}^\infty \int_{A_k-x} \frac{\delta^-(u,x,y)}{\ve y \ve^n \varphi(\ve y \ve)} \, dy.
\end{align*}
If $y \in A_k - x$, then we have $x \pm y \in B_{3R}$ and
\begin{align*}
\delta(u,x,y)
&< \left( u(x) + y \cdot \nabla \Gamma(x) - M \frac{\uC + \oC}{\uC(r_0)} r_k^2 \right) + \left( \Gamma(x) - y \cdot \nabla \Gamma(x) \right) - 2u(x) \\
&= - M \frac{\uC + \oC}{\uC(r_0)} r_k^2,
\end{align*}
which yields
\begin{align*}
f(x) \geq C_\varphi \lambda a^{-1} \frac{\uC + \oC}{\uC(r_0)} \sum_{k=0}^\infty \frac{Mr_k^2}{r_k^n \varphi(r_k)} \lv A_k \rv
\end{align*}
with the help of the weak scaling condition \eqref{a:wsc}.

Suppose that we cannot find $k \in \mathbb{N} \cup \lb 0 \rb$ satisfying \eqref{e:set} with some constant $C > 0$ which will be chosen later. Then we have
\begin{align*}
f(x) 
&> C_\varphi \lambda a^{-1} \frac{\uC + \oC}{\uC(r_0)} \sum_{k=0}^\infty \frac{Mr_k^2}{r_k^n \varphi(r_k)} C \frac{f(x)}{M} \ve R_k \ve \\
&= \frac{3\omega_n \lambda}{4a} CC_\varphi \frac{\uC + \oC}{\uC(r_0)} \sum_{k=0}^\infty \frac{r_k^2}{\varphi(r_k)} f(x).
\end{align*}
We use the weak scaling condition to have
\begin{align*}
\uC(r_0)
&= \sum_{k=0}^\infty \int_{r_{k+1}}^{r_k} \frac{r}{\varphi(r)} \, dr \leq \sum_{k=0}^\infty \int_{r_{k+1}}^{r_k} \frac{ar_k}{\varphi(r_k)} \left( \frac{r_k}{r} \right)^{\os} \, dr \\
&\leq \sum_{k=0}^\infty \frac{a r_k (r_k - r_{k+1})}{\varphi(r_k)} \left( \frac{r_k}{r_{k+1}} \right)^2 = 4a \sum_{k=0}^\infty \frac{r_k^2}{\varphi(r_k)}.
\end{align*}
Using the inequality \eqref{e:bound} we arrive at
\begin{align*}
f(x) > \frac{3\omega_n \lambda}{16a^2} c_1 C f(x),
\end{align*}
which is a contradiction if we have taken $C \geq \frac{16a^2}{3\omega_n \lambda c_1}$.
\end{proof}

We observe from \eqref{e:f>0} that $f(x)$ is positive for $x \in \lb u = \Gamma \rb$.

\begin{lemma} \label{l:4.2}
Under the same assumptions as in Lemma \ref{l:4.1}, there exist uniform constants $\varepsilon_n \in (0,1)$ and $C = C(n, \lambda, a) > 0$ such that for each $x \in \lb u = \Gamma \rb$, we find some $k \geq 0$ satisfying
\begin{align*}
\frac{\lv \lb y \in B_{r_k}(x) \setminus B_{r_{k+1}}(x) : u(y) < u(x) + (y-x) \cdot \nabla \Gamma(x) - C \dfrac{\uC + \oC}{\uC(r_0)} f(x) r_k^2 \rb \rv}{\ve B_{r_k}(x) \setminus B_{r_{k+1}}(x) \ve} \leq \varepsilon_n
\end{align*}
and
\begin{align*}
\ve \nabla \Gamma (B_{r_k/4}(x)) \ve \leq C \left( \frac{\uC + \oC}{\uC(r_0)} f(x) \right)^n \ve B_{r_k/4}(x) \ve.
\end{align*}
\end{lemma}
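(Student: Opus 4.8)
The plan is to combine Lemma \ref{l:4.1} with a standard covering/measure argument in the spirit of \cite[Lemma 8.4]{CS}. First I would fix $x \in \lb u = \Gamma \rb$ and apply Lemma \ref{l:4.1} with the specific choice $M = C' f(x)$, where $C'$ is a large constant to be determined; this produces an index $k \geq 0$ and a set $A_k$ with $\ve A_k \ve \leq (C/C') \ve B_{r_k}(x) \setminus B_{r_{k+1}}(x) \ve$. Choosing $C'$ large enough (depending only on $n, \lambda, a$ and the dimensional volume ratio between the annulus $B_{r_k}(x) \setminus B_{r_{k+1}}(x)$ and the smaller ball $B_{r_k/4}(x)$, which is uniform since $r_{k+1} = r_k/2$), I get $\ve A_k \ve \leq \varepsilon_n \ve B_{r_k}(x) \setminus B_{r_{k+1}}(x) \ve$ for a dimensional $\varepsilon_n \in (0,1)$ to be fixed below. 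This immediately gives the first claimed inequality with the constant $C = C'$.

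Next, for the gradient estimate, I would restrict attention to the annular region $B_{r_k}(x) \setminus B_{r_{k+1}}(x)$ and note that, by the choice of $A_k$, on the complement of $A_k$ within that annulus one has
\begin{align*}
u(y) \geq u(x) + (y-x) \cdot \nabla \Gamma(x) - C' f(x) \frac{\uC + \oC}{\uC(r_0)} r_k^2 .
\end{align*}
The key geometric point is that $\ve (B_{r_k}(x) \setminus B_{r_{k+1}}(x)) \setminus A_k \ve \geq (1-\varepsilon_n) \ve B_{r_k}(x) \setminus B_{r_{k+1}}(x) \ve$, and since $r_{k+1}=r_k/2$, a fixed proportion of the smaller ball $B_{r_k/4}(x)$ can be covered after translating the annulus; more precisely one shows that the set where the above lower bound on $u$ holds meets $B_{r_k/4}(x)$ in a set of measure at least a fixed fraction of $\ve B_{r_k/4}(x) \ve$, provided $\varepsilon_n$ is chosen small (dimensionally). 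On that set $u = \Gamma$ is pinched between the plane $y \mapsto u(x) + (y-x)\cdot\nabla\Gamma(x)$ and that plane shifted down by $C' f(x) \frac{\uC+\oC}{\uC(r_0)} r_k^2$, so $\Gamma$ stays within a slab of height $C' f(x)\frac{\uC+\oC}{\uC(r_0)} r_k^2$ over a ball of radius comparable to $r_k$. A convexity (concavity) argument — exactly as in \cite[Lemma 8.4]{CS} — then bounds the diameter of the superdifferential image: $\diam (\nabla \Gamma(B_{r_k/4}(x))) \leq C f(x) \frac{\uC+\oC}{\uC(r_0)} r_k$, whence $\ve \nabla \Gamma(B_{r_k/4}(x)) \ve \leq C ( f(x) \frac{\uC+\oC}{\uC(r_0)} )^n r_k^n = C ( f(x)\frac{\uC+\oC}{\uC(r_0)} )^n \ve B_{r_k/4}(x) \ve$ up to the dimensional volume constant $\omega_n$.

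The main obstacle I expect is the geometric bookkeeping in the second step: one must be careful that the annulus $B_{r_k}(x)\setminus B_{r_{k+1}}(x)$, rather than a full ball, still controls a definite portion of $B_{r_k/4}(x)$, and that the slab estimate for the concave function $\Gamma$ translates correctly into a bound on the size of its superdifferential. This is precisely the content of the classical argument in \cite[Lemma 8.4]{CS}; the only new feature here is that the scale-$r_k^2$ normalization carries the factor $\frac{\uC + \oC}{\uC(r_0)}$ instead of $1$, which propagates linearly through the slab-height estimate and hence appears to the $n$-th power in the Jacobian-type bound. The weak scaling condition \eqref{a:wsc} and the bounds of Lemma \ref{l:wsc} and Lemma \ref{l:bound} guarantee all the constants produced depend only on $n,\lambda,a$ (and not on $\us,\os$), which is what the uniformity assertion requires.
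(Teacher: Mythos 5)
The paper itself gives no proof of Lemma~\ref{l:4.2}; it simply refers to \cite[Lemma 8.4 and Corollary 8.5]{CS}. Your overall plan --- apply Lemma~\ref{l:4.1} with $M$ proportional to $f(x)$ to get the measure estimate, then use concavity of $\Gamma$ to convert that into a bound on $\ve\nabla\Gamma(B_{r_k/4}(x))\ve$ --- is indeed the right reconstruction of the CS argument, and the first step (choosing $M = C'f(x)$ with $C'$ large so that $C/C' \le \varepsilon_n$) is correct.

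However, the bridge you describe between the annular measure estimate and the inner-ball gradient estimate does not work as stated, and this is the one step that actually requires an idea. You say that ``a fixed proportion of the smaller ball $B_{r_k/4}(x)$ can be covered after translating the annulus'' and that ``the set where the above lower bound on $u$ holds meets $B_{r_k/4}(x)$ in a set of measure at least a fixed fraction.'' This cannot be right: the good set $(B_{r_k}(x)\setminus B_{r_{k+1}}(x))\setminus A_k$ is contained in the annulus $\{r_k/2 \le \ve y-x\ve \le r_k\}$, which is \emph{disjoint} from $B_{r_k/4}(x)$, and translating the annulus has no meaning in the problem. The correct mechanism is purely convex-geometric: write $P(y) = u(x) + (y-x)\cdot\nabla\Gamma(x)$ for the tangent plane and $h = C'f(x)\frac{\uC+\oC}{\uC(r_0)}r_k^2$. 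Since $u \le \Gamma \le P$ with equality at $x$, the good set lies in the sublevel set $K := \lb P - \Gamma \le h \rb$, which is \emph{convex} (because $P-\Gamma$ is convex) and contains $x$. A convex set containing $x$ that covers a fraction $\ge 1-\varepsilon_n$ of the annulus $B_{r_k}(x)\setminus B_{r_k/2}(x)$ must contain $B_{\theta r_k}(x)$ for a dimensional $\theta$, once $\varepsilon_n$ is chosen smaller than a dimensional threshold (otherwise a supporting half-space of $K$ would cut off too large a fraction of the annulus). That is how the slab estimate $P - Ch \le \Gamma \le P$ propagates to $B_{r_k/4}(x)$; the diameter bound $\diam\bigl(\nabla\Gamma(B_{r_k/4}(x))\bigr) \le Ch/r_k$ and hence the $n$-th power volume bound then follow from the usual concavity argument, as you say. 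A minor further slip: on the good set you only have $\Gamma$ (not $u=\Gamma$) pinched between the two planes, since $u$ and $\Gamma$ need not coincide there; this does not affect the conclusion but the wording should be corrected.
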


For the proof of Lemma \ref{l:4.2} we refer to \cite[Lemma 8.4 and Corollary 8.5]{CS}. We next obtain a nonlocal ABP estimate.

\begin{theorem} [ABP estimate] \label{t:ABP}
Under the same assumptions as in Lemma \ref{l:4.1}, there is a finite, disjoint family of open cubes $\mathcal{Q}_j$ with diameters $d_j \leq r_0$ such that the followings hold:
\begin{enumerate}
\item[(i)] $\lb u = \Gamma \rb \cap \overline{\mathcal{Q}}_j \neq \emptyset$ for any $\mathcal{Q}_j$,
\item[(ii)] $\lb u = \Gamma \rb \subset \bigcup_{j=1}^m \overline{\mathcal{Q}}_j$,
\item[(iii)] $\ve \nabla \Gamma ( \overline{\mathcal{Q}}_j ) \ve \leq C \left(\dfrac{\uC + \oC}{\uC(r_0)} \max_{\overline{\mathcal{Q}}_j} f \right)^n \ve \mathcal{Q}_j \ve$,
\item[(iv)] $\lv \lb y \in 32 \sqrt{n} \mathcal{Q}_j : u(y) > \Gamma(y) - C \dfrac{\uC + \oC}{\uC(r_0)} \left( \max_{\overline{\mathcal{Q}}_j} f \right) d_j^2 \rb \rv \geq \mu_n \ve \mathcal{Q}_j \ve$
\end{enumerate}
for $\mu_n = 1-\varepsilon_n \in (0,1)$, where $C > 0$ is a uniform constant depending only on $n, \lambda$, and $a$.
\end{theorem}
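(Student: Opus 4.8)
The plan is to combine the pointwise dichotomy supplied by Lemma~\ref{l:4.2} with a Vitali-type covering argument, exactly as in \cite[Theorem 8.7]{CS}, the only new feature being that the scale-dependent quantities $r_k$ and $\uC(r_0)$ replace the powers of $2-\sigma$. First I would apply Lemma~\ref{l:4.2} at every contact point $x \in \lb u = \Gamma \rb$: this produces, for each such $x$, an index $k = k(x) \geq 0$ and hence a ball $B_{r_{k(x)}/4}(x)$ on which both the measure estimate for the sublevel set (item (iv) in disguise) and the bound $\ve \nabla \Gamma(B_{r_{k(x)}/4}(x)) \ve \leq C\left( \frac{\uC+\oC}{\uC(r_0)} f(x) \right)^n \ve B_{r_{k(x)}/4}(x) \ve$ hold. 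Since $r_k \leq r_0 = \rho_0 2^{-1/(2-\us)} R$ for all $k \geq 0$, all these balls have radius at most $r_0/4$, which will give the diameter bound $d_j \leq r_0$ once we pass to cubes.

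Next I would run a Vitali covering lemma on the family $\lb B_{r_{k(x)}/4}(x) : x \in \lb u = \Gamma \rb \rb$. The contact set is compact (it is closed and bounded, being contained in $\overline{B_R}$), so finitely many of the selected disjoint balls, after being dilated by a fixed factor ($5$, say), already cover $\lb u = \Gamma \rb$. I would then inscribe in each chosen ball $B_{r_{k_j}/4}(x_j)$ a cube $\mathcal{Q}_j$ of comparable size (diameter $d_j$ comparable to $r_{k_j}$) and arrange the dilation constants so that $32\sqrt{n}\,\mathcal{Q}_j$ contains the dilated ball on which the measure estimate of Lemma~\ref{l:4.2} was stated; shrinking the $\mathcal{Q}_j$ slightly if necessary keeps them disjoint while preserving (i) (each $\overline{\mathcal{Q}}_j$ still meets the contact set because it is built around the contact point $x_j$) and (ii). Item (iii) is then immediate from the second conclusion of Lemma~\ref{l:4.2} together with $\max_{\overline{\mathcal{Q}}_j} f \geq f(x_j)$ and $\ve \mathcal{Q}_j \ve$ comparable to $\ve B_{r_{k_j}/4}(x_j) \ve$; the geometric constants are absorbed into $C = C(n,\lambda,a)$. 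Item (iv) follows from the first conclusion of Lemma~\ref{l:4.2}: the complement of the sublevel set there has measure at least $(1-\varepsilon_n)\ve B_{r_{k_j}}(x_j) \setminus B_{r_{k_j+1}}(x_j) \ve$, and since $r_{k_j+1} = r_{k_j}/2$ this annulus has measure a fixed fraction of $\ve B_{r_{k_j}}(x_j) \ve$, hence of $\ve \mathcal{Q}_j \ve$; choosing the comparability constants appropriately yields the stated $\mu_n \ve \mathcal{Q}_j \ve$ with $\mu_n = 1 - \varepsilon_n$ after possibly enlarging the dilation factor from $5$ to $32\sqrt{n}$.

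The step requiring the most care is the bookkeeping of the dilation constants, so that the cubes $\mathcal{Q}_j$ are simultaneously (a) disjoint, (b) of diameter $\leq r_0$, (c) large enough that $32\sqrt{n}\,\mathcal{Q}_j$ captures the annulus $B_{r_{k_j}}(x_j)\setminus B_{r_{k_j+1}}(x_j)$ from Lemma~\ref{l:4.2}, and (d) such that the passage $B_{r_{k_j}/4}(x_j) \leftrightarrow \mathcal{Q}_j$ only costs dimensional constants in (iii) and (iv). This is purely a matter of fixing the ratio between a ball's radius and the side-length of the inscribed and circumscribed cubes and then choosing the Vitali dilation factor accordingly; no new analytic input beyond Lemma~\ref{l:4.2} and the weak scaling condition (already used to produce $r_k$) is needed. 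The verification that the factor $\frac{\uC+\oC}{\uC(r_0)}$ rather than $\frac{\uC+\oC}{\uC(R)}$ is the correct normalization in (iii)--(iv) is inherited directly from Lemma~\ref{l:4.1} and Lemma~\ref{l:4.2}, so it carries over without change.
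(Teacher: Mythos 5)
Your proposal departs from the paper's route: the paper simply cites the nonlocal ABP estimate of Caffarelli–Silvestre, whose proof is a Calder\'on--Zygmund (dyadic) decomposition of a single large cube covering $\lb u = \Gamma\rb$, refined until each remaining cube has diameter $\leq r_0$ and satisfies the estimates coming from Lemma~\ref{l:4.2}. You instead attempt a Vitali covering argument over the family of balls $B_{r_{k(x)}/4}(x)$.

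The gap is at condition~(ii). The Vitali lemma hands you a \emph{disjoint} subfamily of balls $B_j = B_{r_{k_j}/4}(x_j)$ whose \emph{dilations} $5B_j$ cover the contact set; the undilated balls themselves typically leave gaps. If you inscribe a cube $\mathcal{Q}_j \subset B_j$, the cubes inherit disjointness but also inherit the failure to cover, so (ii) fails. If instead you take cubes large enough to swallow $5B_j$ so that coverage holds, the cubes overlap and disjointness fails. Your sentence ``shrinking the $\mathcal{Q}_j$ slightly if necessary keeps them disjoint while preserving (i) \ldots\ and (ii)'' glides over precisely this tension, and it cannot be resolved by adjusting dilation constants alone: a disjoint family of balls with radii comparable to $r_{k_j}/4$ cannot simultaneously cover a generic compact set of positive measure. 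The paper's dyadic approach avoids the problem entirely because it starts from a tiling of a cube that \emph{a priori} covers $\lb u = \Gamma \rb$ and only ever subdivides; the subcubes are automatically essentially disjoint and their union never loses the contact set. Lemma~\ref{l:4.2} then guarantees that the subdivision terminates (at each contact point there is a good scale $r_k$, so cubes stop being split once they reach that scale), which is also where the diameter bound $d_j \leq r_0$ comes from. Your reading of where $r_0$ and the normalization $\frac{\uC+\oC}{\uC(r_0)}$ enter, and your derivations of (iii) and (iv) from the two conclusions of Lemma~\ref{l:4.2}, are fine in spirit and would carry over once the covering scheme is replaced by the dyadic one; but as written the construction does not produce a family of cubes satisfying (ii).
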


The proof of Theorem \ref{t:ABP} can be found in \cite[Theorem 3.4]{CS}. It is important to note that when $\us$ is close to 2, the upper bound for the diameters $r_0 = \rho_0 2^{-1/(2-\us)}$ becomes very small so that Theorem \ref{t:ABP} generalizes the classical ABP estimate.

\subsection{A Barrier Function} \label{s:4.2 Barrier}

This section is devoted to construct a barrier function at every scale to find scaling invariant uniform estimates.

\begin{lemma} \label{l:subsoln}
Let $\kappa_1 \in (0,1), \sigma_0 \in (0,2)$, and assume $\us \geq \sigma_0$. There exist uniform constants $p = p(n, \lambda, \Lambda) > n+1$ and $\kappa_0 = \kappa_0(n, \lambda, \Lambda, a, \sigma_0) \in (0, \kappa_1/8)$ such that the function $\Phi_1(x) = \min \lb \ve \kappa_0 R \ve^{-p}, \ve x \ve^{-p} \rb$ satisfies
\begin{align*}
\mathcal{M}_{\mathcal{L}_0}^- \Phi_1(x) \geq 0
\end{align*}
for $x \in B_R \setminus B_{\kappa_1 R}$.
\end{lemma}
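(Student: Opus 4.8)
The plan is to mimic the classical barrier construction of Caffarelli--Silvestre (\cite[Lemma 9.1]{CS}): show that the radial power $\ve x \ve^{-p}$ is a (sub)solution of the extremal equation away from the origin, provided $p$ is chosen large (depending only on $n,\lambda,\Lambda$), and then read off from the weak scaling condition and the bounds of Lemma~\ref{l:wsc} and Lemma~\ref{l:bound} that the threshold radius $\kappa_0 R$ can be made as small as we like uniformly in $\varphi$. The function $\Phi_1$ is bounded and $C^{1,1}$ at every $x \in B_R \setminus B_{\kappa_1 R}$ since there $\Phi_1(x) = \ve x \ve^{-p}$ and the cap $\ve \kappa_0 R \ve^{-p}$ only affects $\Phi_1$ inside $B_{\kappa_0 R}$ with $\kappa_0 < \kappa_1/8$; hence $\mathcal{M}_{\mathcal{L}_0}^- \Phi_1(x)$ is defined classically and we may compute it by splitting the integral $\int_{\Rn} \frac{\lambda \delta^+(\Phi_1,x,y) - \Lambda \delta^-(\Phi_1,x,y)}{\ve y \ve^n \varphi(\ve y \ve)}\,dy$.

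The key steps, in order, are as follows. First, fix $x$ with $\ve x \ve = \rho \in [\kappa_1 R, R)$ and by scaling reduce to $\rho = 1$, at the cost of replacing $\varphi$ by a rescaled function still satisfying \eqref{a:wsc} with the same constants; alternatively keep track of $\rho$ explicitly. Second, decompose $\Rn$ into the near region $\{\ve y \ve < \rho/2\}$, where a second-order Taylor expansion of $x \mapsto \ve x \ve^{-p}$ gives $\delta(\Phi_1,x,y) = y^T D^2(\ve x \ve^{-p}) y + O(\ve y\ve^3 \rho^{-p-3})$ with $D^2(\ve x \ve^{-p})$ having one large positive eigenvalue $p(p+1)\rho^{-p-2}$ in the radial direction and $n-1$ negative eigenvalues $-p\rho^{-p-2}$; the positive radial term, once $p$ is large enough that $p+1 > (n-1)\frac{\Lambda}{\lambda}$ (this is where $p = p(n,\lambda,\Lambda) > n+1$ enters), dominates in the sense that $\lambda \delta^+ - \Lambda \delta^- \geq c(n,\lambda,\Lambda) \ve y \ve^2 \rho^{-p-2}$ on this region after discarding the cubic error. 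Third, on the far region $\{\ve y \ve \geq \rho/2\}$ bound $\delta^-(\Phi_1,x,y) \leq 2\Phi_1(x) = 2\rho^{-p}$ (using $\Phi_1 \geq 0$ and $\Phi_1 \leq \rho^{-p}$) and estimate its contribution by $C_\varphi \Lambda \rho^{-p} \int_{\ve y \ve \geq \rho/2} \frac{dy}{\ve y \ve^n \varphi(\ve y \ve)} \lesssim C_\varphi \Lambda \rho^{-p} \oC(\rho/2)$; the positive near-region term contributes at least $c\, C_\varphi \lambda \rho^{-p-2} \int_{\kappa_0 R \leq \ve y \ve < \rho/2}\frac{\ve y \ve^2\,dy}{\ve y \ve^n \varphi(\ve y \ve)} \gtrsim c\, C_\varphi \lambda \rho^{-p-2}\big(\uC(\rho/2) - \uC(\kappa_0 R)\big)$, where the lower cutoff $\kappa_0 R$ appears because for $\ve y \ve < \kappa_0 R < \kappa_1 R/8 \le \rho/8$ the points $x \pm y$ may land where $\Phi_1$ is the constant cap and the Taylor expansion is not valid there — but on that tiny ball one still has $\delta(\Phi_1,x,y) \ge -C\ve y\ve^2\rho^{-p-2}$ so this piece is a controlled error absorbed into $\uC(\kappa_0 R)$. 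Fourth, combine: using \eqref{e:wsc 0}, \eqref{e:wsc infty}, \eqref{e:wsc ratio} to compare $\uC(\rho/2), \oC(\rho/2)$ with $\rho^2/\varphi(\rho)$ and $1/\varphi(\rho)$, and the crucial fact $\us \geq \sigma_0$ which prevents $\uC$ from degenerating, one gets the net bound $\mathcal{M}_{\mathcal{L}_0}^- \Phi_1(x) \geq c\, C_\varphi \lambda \rho^{-p-2}\frac{\rho^2}{\varphi(\rho)}\big(c' - a^2(\kappa_0 R/\rho)^{\us - 2}(\dots)\big) - C C_\varphi \Lambda \rho^{-p}\frac{a}{\us\varphi(\rho)}$, and then choosing $p$ large (to beat the far-region constant relative to the near-region constant, both of which are comparable after the $C_\varphi$ bounds of Lemma~\ref{l:bound}) and then $\kappa_0$ small makes the bracket positive.

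The main obstacle I expect is bookkeeping the interplay of the three small/large parameters uniformly in $\varphi$: the constant $C_\varphi$ is itself order $(\uC(1)+\oC(1))^{-1}$ by Lemma~\ref{l:bound}, and the near- and far-region integrals each come with a factor of $\varphi(\rho)^{-1}$ times $\uC$- or $\oC$-type quantities, so one must verify that after cancelling $C_\varphi$ the ``bad'' far-region term is genuinely lower order in the right regime — this is exactly where $p > n+1$ (so that the radial eigenvalue dominates) and $\us \geq \sigma_0$ (so that $\uC(\rho/2)$ is comparably large, i.e. $\uC(\rho/2) \geq \frac{1}{a(2-\us)}\frac{(\rho/2)^2}{\varphi(\rho/2)}$ stays bounded below appropriately and the ratio $\oC(\rho)/\uC(\rho) \lesssim \varphi(\rho)/(\rho^2 \us) \cdot (\dots)$ is controlled) must be used. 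Once the structural inequality is in place, choosing first $p = p(n,\lambda,\Lambda)$ and then $\kappa_0 = \kappa_0(n,\lambda,\Lambda,a,\sigma_0) \in (0,\kappa_1/8)$ is routine. I would organize the write-up by first recording the Taylor/eigenvalue computation for $\ve x \ve^{-p}$ (independent of $\varphi$), then doing the region split and invoking Lemma~\ref{l:wsc} and Lemma~\ref{l:bound}, and finally fixing the constants.
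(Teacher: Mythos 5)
Your plan diverges from the paper's argument in a way that leaves a genuine gap, and it also contains a geometric mistake about where the cap $\ve \kappa_0 R \ve^{-p}$ matters.

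First, the geometry. You assert that for $\ve y \ve < \kappa_0 R$ ``the points $x \pm y$ may land where $\Phi_1$ is the constant cap.'' This is false. Since $\ve x \ve \geq \kappa_1 R$ and $\kappa_0 < \kappa_1/8$, for any $\ve y \ve < \rho/2$ (your entire near region, not just the tiny ball) one has $\ve x \pm y \ve \geq \ve x \ve - \ve y \ve > \rho/2 > 4\kappa_0 R$, so $x \pm y$ is never inside the cap. The second-order Taylor expansion of $\ve \cdot \ve^{-p}$ is therefore valid on the whole near region, and the quantity $\uC_\varphi(\kappa_0 R)$ you subtract has no geometric source. The cap is actually encountered only for $y$ with $\ve y \ve \approx \rho$ (i.e.\ $y \approx \mp x$, so that $x \pm y$ is near the origin), which is in your \emph{far} region — and there it produces a \emph{huge positive} contribution $\Phi_1(x \mp y) \approx (\kappa_0 R)^{-p}$ to $\delta$, not a negative error.

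Second, the quantitative gap. Ignoring the far-region positive part, as you do, your estimate reduces to
\[
\text{(near gain)} \gtrsim c(p)\,\lambda\, C_\varphi\, \rho^{-p-2}\, \uC_\varphi(\rho/2)
\quad\text{vs.}\quad
\text{(far loss)} \lesssim \Lambda\, C_\varphi\, \rho^{-p}\, \oC_\varphi(\rho/2),
\]
where $c(p) \sim p^2$ comes from the radial eigenvalue. By Lemma~\ref{l:wsc}, $\uC_\varphi(\rho/2)/\rho^2 \asymp \frac{1}{(2-\us)\varphi(\rho)}$ and $\oC_\varphi(\rho/2) \asymp \frac{1}{\us\varphi(\rho)}$, so the comparison forces $c(p)\lambda/(2-\us) \gtrsim \Lambda/\us$, i.e.\ $p^2 \gtrsim \frac{\Lambda(2-\us)}{\lambda\us}$, which is as large as $\frac{2\Lambda}{\lambda\sigma_0}$ when $\us$ is near $\sigma_0$. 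That would force $p$ to depend on $\sigma_0$ (and, via the full constants, on $a$), contradicting the lemma's claim that $p = p(n,\lambda,\Lambda)$ only. Taking $\kappa_0$ small does not help here: it only makes $\uC_\varphi(\kappa_0 R) \to 0$, so the near gain tends to the fixed quantity $c(p)\lambda C_\varphi\rho^{-p-2}\uC_\varphi(\rho/2)$ and never blows up.

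The paper's proof is structured precisely to avoid this. It splits $\mathcal{M}^-_{\mathcal{L}_0}\Phi_1 = C_\varphi(I_1 + I_2 + I_3)$, where $I_2$ (Taylor on $B_{R_0/2}$, with $p$ chosen so the second-order angular integral is nonnegative) and $I_3$ (the far tail, of size $\sim \Lambda \rho^{-p}\oC_\varphi(\rho/2) \lesssim \frac{\Lambda}{\sigma_0}\frac{\rho^{-p}}{\varphi(\rho/2)}$) are both lower-order errors, and the decisive positive piece is
\[
I_1 \geq \frac{\lambda}{4}\int_{B_{R_0/4}(x)\setminus B_{\kappa_0 R}(x)} \frac{\ve x - y \ve^{-p}}{\ve y \ve^n\varphi(\ve y \ve)}\,dy \gtrsim \frac{\lambda}{a^2(p-n)}\,\frac{\kappa_1}{\kappa_0}\,\frac{\rho^{-p}}{\varphi(\rho/2)},
\]
which is exactly the singular contribution from $y$ in an annulus around $x$, and which can be made arbitrarily large by shrinking $\kappa_0$. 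That is what beats the $\frac{1}{\sigma_0}$ in $I_3$, and it is why only $\kappa_0$ (not $p$) depends on $a$ and $\sigma_0$. Your write-up never uses the singularity of $\Phi_1$ near the origin, which is the essential mechanism; without it the sign cannot be closed with $p = p(n,\lambda,\Lambda)$.
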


\begin{proof}
Without loss of generality we may assume that $x = R_0 e_1$ for $\kappa_1 R \leq R_0 < R$. We need to compute
\begin{align*}
\mathcal{M}_{\mathcal{L}_0}^- \Phi_1(x)
=& C_\varphi \int_{\Rn} \frac{ \lambda \delta^+(\Phi_1, x, y) - \Lambda \delta^- (\Phi_1, x, y) }{\ve y \ve^n \varphi( \ve y \ve )} \, dy \\
=& C_\varphi \int_{\Rn} \frac{\frac{\lambda}{2} \delta^+}{\ve y \ve^n \varphi( \ve y \ve )} \, dy + C_\varphi \int_{B_{R_0/2}} \frac{\frac{\lambda}{2} \delta^+ - \Lambda \delta^-}{\ve y \ve^n \varphi(\ve y \ve)} \, dy \\
&+ C_\varphi \int_{\Rn \setminus B_{R_0 / 2}} \frac{\frac{\lambda}{2} \delta^+ - \Lambda \delta^-}{\ve y \ve^n \varphi(\ve y \ve)} \, dy =: C_\varphi \left( I_1 + I_2 + I_3 \right).
\end{align*}
For $\ve y \ve \leq R_0 / 2$, we have
\begin{align*}
\delta(\Phi_1, x, y)
&= R_0^{-p} \left( \lv \frac{x}{R_0} + \frac{y}{R_0} \rv^{-p} + \lv \frac{x}{R_0} - \frac{y}{R_0} \rv^{-p} - 2 \right) \\
&\geq pR_0^{-p} \left( - \lv \frac{y}{R_0} \rv^2 + (p+2) \frac{y_1^2}{R_0^2} - \frac{1}{2} (p+2)(p+4) \frac{y_1^2 \ve y \ve^2}{R_0^4} \right).
\end{align*}
We choose $p = p(n, \lambda, \Lambda) > n + 1$ large enough so that
\begin{align*}
(p+2) \frac{\lambda}{2} \int_{\partial B_1} y_1^2 \, d\sigma(y) - \Lambda \ve \partial B_1 \ve \geq 0.
\end{align*}
Then we obtain
\begin{align*}
I_2
&\geq p R_0^{-p} \int_{B_{R_0/2}} \left( \frac{\lambda}{2} (p+2) \frac{y_1^2}{R_0^2} - \Lambda \left( \frac{\ve y \ve^2}{R_0^2} + \frac{(p+2)(p+4) y_1^2 \ve y \ve^2}{2R_0^4} \right) \right) \frac{1}{\ve y \ve^n \varphi(\ve y \ve)} \, dy \\
&\geq -p R_0^{-p} \frac{\Lambda(p+2)(p+4)}{2R_0^4} c_n \int_0^{R_0/2} \frac{r^3}{\varphi(r)} \, dr,
\end{align*}
where $c_n = \int_{\partial B_1} y_1^2 \, d\sigma(y) > 0$ is a constant depending only on $n$. Using the weak scaling condition \eqref{a:wsc}, we have
\begin{align*}
\int_0^{R_0/2} \frac{r^3}{\varphi(r)} \, dr \leq \frac{a}{4-\os} \frac{R_0^4}{16\varphi(R_0/2)} \leq \frac{aR_0^4}{32 \varphi(R_0/2)}
\end{align*}
and hence 
\begin{align} \label{e:I_2}
I_2 \geq -\frac{\Lambda p(p+2)(p+4)c_n a}{64R_0^p \varphi(R_0/2)}.
\end{align}
On the other hand, using the inequality \eqref{e:wsc infty} we estimate
\begin{align} \label{e:I_3}
I_3 \geq - \int_{B_{R_0/2}^c} \frac{2\Lambda R_0^{-p}}{\ve y \ve^n \varphi(\ve y \ve)} \, dy = - 2n \omega_n \Lambda R_0^{-p} \oC(R_0/2) \geq - \frac{2n \omega_n \Lambda}{a \us R_0^p \varphi(R_0/2)}.
\end{align}
Now we will make $I_1$ sufficiently large by selecting $\kappa_0 > 0$ small. We have
\begin{align*}
I_1
&\geq \frac{\lambda}{2} \int_{B_{R_0/4}(x)} \frac{\ve x-y \ve^{-p} - 2R_0^{-p}}{\ve y \ve^n \varphi( \ve y \ve)} \, dy \geq \frac{\lambda}{4} \int_{B_{R_0/4}(x) \setminus B_{\kappa_0 R} (x) } \frac{\ve x-y \ve^{-p}}{\ve y \ve^n \varphi(\ve y \ve)} \, dy \\
&= \frac{\lambda}{4} \int_{B_{R_0/4}(0) \setminus B_{\kappa_0 R} (0) } \frac{\ve z \ve^{-p}}{\ve x+z \ve^n \varphi(\ve x+z \ve)} \, dz \\
&\geq \frac{\lambda n \omega_n}{2^{n+2} R_0^n} \left( \min_{r \in \left[ \frac{R_0}{2}, \frac{3R_0}{2} \right]} \frac{1}{\varphi(r)} \right) \int_{\kappa_0 R}^{R_0/4} r^{-p+n-1} \, dr.
\end{align*}
If we have taken $\kappa_0 \in (0, \kappa_1/8)$, then we have
\begin{align*}
\int_{\kappa_0 R}^{R_0/4} r^{-p+n-1} \, dr 
&= \frac{(\kappa_0 R)^{-p+n} - (R_0 / 4)^{-p+n}}{p-n} \\
&\geq \frac{(\kappa_0 / \kappa_1)^{p-n} - 4^{p-n}}{p-n} R_0^{n-p} \geq \frac{1}{2(p-n)} \frac{\kappa_1}{\kappa_0} R_0^{n-p}.
\end{align*}
We use the weak scaling condition \eqref{a:wsc} to obtain
\begin{align*}
\min_{r \in \left[ \frac{R_0}{2}, \frac{3R_0}{2} \right]} \frac{1}{\varphi(r)} \geq \frac{1}{a \varphi(3R_0/2)} \geq \frac{1}{a^2 3^{\os} \varphi(R_0/2)} \geq \frac{1}{9a^2 \varphi(R_0/2)},
\end{align*}
and hence
\begin{align} \label{e:I_1}
I_1 \geq \frac{\lambda n \omega_n}{9\cdot 2^{n+3} a^2 (p-n)} \frac{\kappa_1}{\kappa_0} \frac{1}{R_0^p \varphi(R_0/2)}.
\end{align}
Combining \eqref{e:I_2}-\eqref{e:I_1}, we have
\begin{align*}
I_1 + I_2 + I_3 \geq \left( \frac{\lambda n \omega_n}{9 \cdot 2^{n+3} a^2 (p-n)} \frac{\kappa_1}{\kappa_0} - \frac{\Lambda p(p+2)(p+4)c_n a}{64} - \frac{2\Lambda n \omega_n}{a \sigma_0} \right) \frac{R_0^{-p}}{\varphi(R_0/2)}.
\end{align*}
By taking $\kappa_0 = \kappa_0(n, \lambda, \Lambda, a, \sigma_0) \in (0, \kappa_1/8)$ small enough, we have $\mathcal{M}_{\mathcal{L}_0}^- \Phi_1(x) \geq 0$ for $x \in B_R \setminus B_{\kappa_1 R}$.
\end{proof}

\subsection{Power Decay Estimates} \label{s:4.3 decay est}

In this section we establish the measure estimates of super-level sets of the viscosity supersolutions to fully nonlinear elliptic integro-differential equations with respect to $\mathcal{L}_0$ using the ABP estimates and the barrier function constructed in Lemma \ref{l:subsoln}. Let $Q_R = Q_R(0)$ denote a dyadic cube of side $R$ centered at 0 in the sequel.

\begin{lemma} \label{l:decay}
Assume $\us \geq \sigma_0 > 0$. There exist uniform constants $\varepsilon_0, \mu_0 \in (0,1)$ and $M_0 > 1$, depending on $n, \lambda, \Lambda, a$ and $\sigma_0$, such that if a nonnegative function $u$ satisfies $\inf_{Q_\frac{3R}{2\sqrt{n}}} u \leq 1$ and $\mathcal{M}^-_{\mathcal{L}_0} u \leq \frac{\uC(R)}{R^2(\uC + \oC)} \varepsilon_0$ in $Q_{2R}$ in the viscosity sense, then 
\begin{align*}
\lv \lb u \leq M_0 \rb \cap Q_\frac{R}{2\sqrt{n}} \rv > \mu_0 \lv Q_\frac{R}{2\sqrt{n}} \rv.
\end{align*}
\end{lemma}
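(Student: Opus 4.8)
The plan is to adapt the argument of \cite[Lemma~8.5]{CS}, whose two ingredients are the barrier of Lemma~\ref{l:subsoln} and the nonlocal ABP estimate of Theorem~\ref{t:ABP}. In \cite{CS} the constant $2-\sigma$ carries the scaling; here its role is played by $\uC(R)/(R^2(\uC+\oC))$ together with the \emph{dimensionless} bound $\uC(R)/\uC(r_0)\le C(n,a)$, which follows from \eqref{e:wsc ratio} and the specific scale $r_0=\rho_0\,2^{-1/(2-\us)}R$ in Theorem~\ref{t:ABP}: with $t=2\rho_0\,2^{-1/(2-\us)}<1$ one gets $t^{-2+\us}\le 2(2\rho_0)^{-2}$, hence $\uC(R)/\uC(r_0)\le 1+2a^2(2\rho_0)^{-2}$, independent of $\us,\os$. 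First I would build an auxiliary subsolution: fix a small dimensional constant $\kappa_1$, let $p,\kappa_0$ be as in Lemma~\ref{l:subsoln} for this $\kappa_1$, and let $\Psi\in C^{1,1}(\Rn)$ be the function equal to $A(\ve x\ve^{-p}-R^{-p})$ for $\ve x\ve\ge\kappa_0 R$ and smoothly capped inside $B_{\kappa_0 R}$, with $A=A(n,p)\simeq R^p$ chosen so that $\Psi\ge 2$ on $\overline{B_{3R/4}}\supseteq Q_{3R/(2\sqrt{n})}$; then $\Psi\le 0$ in $\Rn\setminus B_R$ and $\Ve\Psi\Ve_{L^\infty(\Rn)}\le M_1(n,\lambda,\Lambda,a,\sigma_0)$. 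Since constants are annihilated by every $L\in\mathcal{L}_0$, Lemma~\ref{l:subsoln} gives $\mathcal{M}^-_{\mathcal{L}_0}\Psi\ge 0$ on $B_R\setminus B_{\kappa_1 R}$, while on $B_{\kappa_1 R}$, splitting the defining integral at $\ve y\ve=R$ and using $\ve\delta(\Psi,x,y)\ve\le\Ve D^2\Psi\Ve_{L^\infty}\ve y\ve^2$ together with $\ve\delta(\Psi,x,y)\ve\le 4\Ve\Psi\Ve_{L^\infty}$, $\Ve D^2\Psi\Ve_{L^\infty}\le C\kappa_0^{-p-2}R^{-2}$, Lemma~\ref{l:wsc} and Lemma~\ref{l:bound}, one obtains $\ve\mathcal{M}^-_{\mathcal{L}_0}\Psi\ve\le C_1\uC(R)/(R^2(\uC+\oC))$ there. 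Thus $\mathcal{M}^-_{\mathcal{L}_0}\Psi\ge-\xi$ in $B_R$ for some $\xi$ with $0\le\xi\le C_1\uC(R)/(R^2(\uC+\oC))$ and $\xi\equiv 0$ on $B_R\setminus B_{\kappa_1 R}$.

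Next, set $v:=\Psi-u$. By the stability results of Section~\ref{s:3 VS}, from $\mathcal{M}^-_{\mathcal{L}_0}\Psi\ge-\xi$ (classically, hence in the viscosity sense) and $\mathcal{M}^-_{\mathcal{L}_0}u\le\varepsilon_0\uC(R)/(R^2(\uC+\oC))$ one gets $\mathcal{M}^+_{\mathcal{L}_0}v\ge-h$ in $B_R$ in the viscosity sense, with $h:=\xi+\varepsilon_0\uC(R)/(R^2(\uC+\oC))$. Moreover $v\le 0$ in $\Rn\setminus B_R$, and at a point $x_0\in Q_{3R/(2\sqrt{n})}$ realizing $\inf_{Q_{3R/(2\sqrt{n})}}u\le 1$ one has $v(x_0)\ge 2-1=1$, so $\sup v\ge 1$. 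Applying Theorem~\ref{t:ABP} to $v$ on $B_R$, with $\Gamma$ the concave envelope of $v^+$ in $B_{3R}$ and $\{\mathcal{Q}_j\}$ the resulting disjoint cubes, $d_j\le r_0$: since $\Gamma$ is concave in $B_{3R}$, vanishes on $\partial B_{3R}$ and has $\max\Gamma=\sup v\ge 1$, we have $\nabla\Gamma(B_{3R})\supseteq B_{1/(3R)}$, so by (iii)
\begin{align*}
c(n)R^{-n}\le\ve\nabla\Gamma(B_{3R})\ve\le C\sum_j\left(\frac{\uC+\oC}{\uC(r_0)}\max_{\overline{\mathcal{Q}}_j}h\right)^n\ve\mathcal{Q}_j\ve .
\end{align*}
On cubes disjoint from $\overline{B_{\kappa_1 R}}$ one has $\max h=\varepsilon_0\uC(R)/(R^2(\uC+\oC))$, and using $\frac{\uC+\oC}{\uC(r_0)}\cdot\frac{\uC(R)}{R^2(\uC+\oC)}=\frac{\uC(R)}{R^2\uC(r_0)}\le C(n,a)R^{-2}$ and $\sum_j\ve\mathcal{Q}_j\ve\le\ve B_{3R}\ve\le C(n)R^n$, the contribution of these cubes is at most $(C\varepsilon_0)^nR^{-n}$; choosing $\varepsilon_0=\varepsilon_0(n,\lambda,\Lambda,a,\sigma_0)$ small, the remaining ``bad'' cubes, each meeting $\overline{B_{\kappa_1 R}}$, satisfy $\sum_{\text{bad}}\ve\mathcal{Q}_j\ve\ge c(n)R^n$.

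Finally, since $\rho_0=2^{-8}n^{-1}$ the bad cubes have side $\le r_0\le\rho_0 R$ and meet $\overline{B_{\kappa_1 R}}$, so once $\kappa_1$ is fixed small enough we have $32\sqrt{n}\,\mathcal{Q}_j\subset Q_{R/(2\sqrt{n})}$ for every bad cube. For each such cube $\frac{\uC+\oC}{\uC(r_0)}\bigl(\max_{\overline{\mathcal{Q}}_j}h\bigr)d_j^2\le C(n,a)C_1 r_0^2R^{-2}\le C_2=C_2(n,\lambda,\Lambda,a,\sigma_0)$, so property (iv), together with $0\le\Gamma\le\Ve\Psi\Ve_{L^\infty}$, shows that $u=\Psi-v<\Ve\Psi\Ve_{L^\infty}+C_2=:M_0$ on a subset of $32\sqrt{n}\,\mathcal{Q}_j$ of measure $\ge\mu_n\ve\mathcal{Q}_j\ve$. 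Summing property (iv) over the bad cubes by a standard bounded-overlap covering argument (as in \cite{CS}) then gives $\ve\{u<M_0\}\cap Q_{R/(2\sqrt{n})}\ve\ge c(n)R^n>\mu_0\ve Q_{R/(2\sqrt{n})}\ve$ for a suitable $\mu_0\in(0,1)$, which is the assertion.

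The main obstacle is not the combinatorics --- the Calder\'on--Zygmund construction inside Theorem~\ref{t:ABP} and the covering in the last step are exactly as in \cite{CS} --- but rather guaranteeing that \emph{every} constant produced above is independent of $\us$ and $\os$. This relies on (i) the identity-type bound $\uC(R)/\uC(r_0)\le C(n,a)$ built into the scale $r_0=\rho_0 2^{-1/(2-\us)}R$; (ii) absorbing any stray factor $1/(2-\os)$ or $1/\us$ arising from Lemma~\ref{l:wsc} into $\uC(R)$, via \eqref{e:wsc 0}--\eqref{e:wsc infty}, and into the admissible exponent $1/\sigma_0$; and (iii) invoking Lemma~\ref{l:bound} to trade $C_\varphi$ for $c_2 a/(\uC+\oC)$ at the right moment. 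The nonlocal tails $\int_{\ve y\ve\ge\rho}\ve y\ve^{-n}\varphi(\ve y\ve)^{-1}\,dy$ are controlled by \eqref{e:wsc infty}, and one must track the interplay of the two small scales $\kappa_1 R$ and $r_0$ so that the dilated bad cubes remain inside $Q_{R/(2\sqrt{n})}$.
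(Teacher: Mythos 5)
Your proof is correct and follows essentially the same strategy as the paper: build the barrier $\Phi$ (your $\Psi$) from Lemma~\ref{l:subsoln}, apply the nonlocal ABP estimate of Theorem~\ref{t:ABP} to $v=\Phi-u$, split the cubes according to whether they meet the small ball where $\mathcal{M}^-_{\mathcal{L}_0}\Phi$ can be negative, and use the uniform bound $\uC(R)/\uC(r_0)\le C(n,a)$ together with Lemmas~\ref{l:wsc} and~\ref{l:bound} to choose $\varepsilon_0$ and conclude via property (iv) and a bounded-overlap covering. The only cosmetic differences are that you absorb the $\oC(R)$ contribution into the bound $C_1\uC(R)/(R^2(\uC+\oC))$ slightly earlier than the paper does, and there is a harmless slip in your value of $t$ (it should be $t=\rho_0 2^{-1/(2-\us)}$, giving $t^{-2+\us}\le 2\rho_0^{-2}$, not $2(2\rho_0)^{-2}$), which does not affect the uniformity conclusion.
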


\begin{proof}
Let $\Phi_1$ be the function in Lemma \ref{l:subsoln} with $\kappa_1 = \rho_0$. Define
\begin{align*}
\Phi(x) := c_0
\begin{cases}
P(x) &\text{for } x \in B_{\kappa_0 R}, \\
(\kappa_0 R)^p ( \Phi_1(x) - \Phi_1(R) ) = (\kappa_0 R)^p \left( \ve x \ve^{-p} - R^{-p} \right) &\text{for } x \in B_R \setminus B_{\kappa_0 R}, \\
0 &\text{for } x \in B_R^c,
\end{cases}
\end{align*}
where $c_0 = \frac{2}{\kappa_0^p ((4/3)^p - 1)}$ and $P(x) := -a\ve x \ve^2 + b$ with $a = \frac{1}{2} p(\kappa_0 R)^{-2}$ and $b = 1- \kappa_0^p + \frac{1}{2}p$. Then $\Phi$ is a $C^{1,1}$ function on $B_R$ and $\Phi \geq 2$ in $B_{3R/4}$. If $x \in B_R \setminus B_{\kappa_0 R}$, then
\begin{align*}
\delta(\Phi, x, y) 
&= \Phi(x+y) + \Phi(x-y) - 2\Phi(x) \\
&\geq c_0 (\kappa_0 R)^p \left( \Phi_1(x+y) - R^{-p} + \Phi_1(x-y) - R^{-p} - 2\Phi_1(x) + 2R^{-p} \right) \\
&= c_0(\kappa_0 R)^p \delta(\Phi_1, x, y).
\end{align*}
Thus, we have
\begin{align*}
\mathcal{M}^-_{\mathcal{L}_0} \Phi(x) 
&= C_\varphi \int_{\Rn} \frac{\lambda \delta^+(\Phi, x, y) - \Lambda \delta^-(\Phi, x, y)}{\ve y \ve^n \varphi(\ve y \ve)} \, dy \\
&\geq C_\varphi c_0 (\kappa_0 R)^p \int_{\Rn} \frac{\lambda \delta^+(\Phi_1, x, y) - \Lambda \delta^-(\Phi_1, x, y)}{\ve y \ve^n \varphi(\ve y \ve)} \, dy \geq 0.
\end{align*}
If $x \in B_R^c$, then we have $\delta(\Phi, x, y) \geq 0$ and hence $\mathcal{M}^-_{\mathcal{L}_0} \Phi (x) \geq 0$. Finally, if $x \in B_{\kappa_0 R}$, then we have
\begin{align*}
\mathcal{M}_{\mathcal{L}_0}^-\Phi(x)
&\geq - C_\varphi \Lambda \int_{\Rn} \frac{\delta^-(\Phi, x, y)}{\ve y \ve^n \varphi(\ve y \ve)} \, dy \\
&\geq - C_\varphi \Lambda \int_{B_R} \frac{c R^{-2} \ve y \ve^2}{\ve y \ve^n \varphi(\ve y \ve)} \, dy - 2bc_0 C_\varphi \Lambda \int_{B_R^c} \frac{1}{\ve y \ve^n \varphi(\ve y \ve)} \, dy \\
&= - C_\varphi c \Lambda n \omega_n R^{-2} \uC(R) - 2bc_0 C_\varphi \Lambda n \omega_n \oC(R)
\end{align*}
since $D^2 \Phi \geq - cR^{-2} I$ a.e. in $B_R$ for some constant $c>0$. This implies that
\begin{align*}
\mathcal{M}_{\mathcal{L}_0}^- \Phi \geq -\psi \quad \text{in} ~ \Rn
\end{align*}
for some function with $\mathrm{supp} \, \psi \in B_{\rho_0 R}$ and a uniform bound
\begin{align} \label{e:psi}
\psi \leq C_\varphi c \Lambda n \omega_n R^{-2} \uC(R) + 2bc_0 C_\varphi \Lambda n \omega_n \oC(R).
\end{align}

We now consider the function $v := \Phi - u$. It satisfies that $v \leq 0$ outside $B_R$, $\max_{B_R} v \geq 1$, and
\begin{align*}
\mathcal{M}_{\mathcal{L}_0}^+ v \geq \mathcal{M}_{\mathcal{L}_0}^- \Phi - \mathcal{M}_{\mathcal{L}_0}^- u \geq - \psi - \frac{\uC(R)}{R^2(\uC + \oC)} \varepsilon_0
\end{align*}
in $B_R$. For the concave envelope $\Gamma$ of $u^+$ in $B_{3R}$, by Theorem \ref{t:ABP}, we have
\begin{align*}
\frac{1}{R} 
&\leq \frac{1}{R} \max_{B_R} v \leq C \ve \nabla \Gamma (B_R) \ve^{1/n} \leq C \left( \sum_j \ve \nabla \Gamma(\overline{\mathcal{Q}_j}) \ve \right)^{1/n} \\
&\leq C \left( \sum_j C \left( \frac{\uC + \oC}{\uC(r_0)} \left( \psi + \frac{\uC(R)}{R^2(\uC + \oC)} \varepsilon_0 \right) \right)^n \ve \mathcal{Q}_j \ve \right)^{1/n} \\
&\leq \frac{C}{R^2} \left( \sum_j \left( \frac{\uC + \oC}{\uC(r_0)} R^2 \psi + \frac{\uC(R)}{\uC(r_0)} \varepsilon_0 \right)^n \ve \mathcal{Q}_j \ve \right)^{1/n}.
\end{align*}
Since $\mathrm{supp} \, \psi \in B_{\rho_0 R}$ and $\sum_j \ve \mathcal{Q}_j \ve \leq C \ve B_R \ve$, it follows that
\begin{align*}
\frac{1}{R} 
&\leq \frac{C}{R^2} \left( \sum_{\overline{\mathcal{Q}_j} \cap B_{\rho_0 R} \neq \emptyset} \left( \frac{\uC + \oC}{\uC(r_0)} R^2 \psi \right)^n \ve \mathcal{Q}_j \ve \right)^{1/n} + \frac{C}{R} \frac{\uC(R)}{\uC(r_0)} \varepsilon_0.
\end{align*}
Using \eqref{e:psi} and \eqref{e:bound} we have
\begin{align*}
\frac{1}{R} \leq \frac{C}{R^2} \left( \sum_{\overline{\mathcal{Q}_j} \cap B_{\rho_0 R} \neq \emptyset} \left( \frac{\uC(R)}{\uC(r_0)} + \frac{R^2 \oC(R)}{\uC(r_0)} \right)^n \ve \mathcal{Q}_j \ve \right)^{1/n} + \frac{C}{R} \frac{\uC(R)}{\uC(r_0)} \varepsilon_0.
\end{align*}
We use the inequality \eqref{e:wsc ratio} to obtain
\begin{align*}
\frac{\uC(R)}{\uC(r_0)} \leq 1+ a^2 \left( \frac{R}{r_0} \right)^{2-\us} = 1 + \frac{2a^2}{\rho_0^{2-\us}} \leq 1+ \frac{2a^2}{\rho_0^2},
\end{align*}
and use the inequalities \eqref{e:wsc 0}, \eqref{e:wsc infty} to obtain
\begin{align*}
\frac{R^2 \oC(R)}{\uC(r_0)} 
&\leq R^2 \left( \frac{a}{\varphi(R) \us} \right) \left( \frac{r_0^2}{a \varphi(r_0) (2-\us)} \right)^{-1} \\
&= a^2 \frac{2-\us}{\us} \frac{R^2}{r_0^2} \frac{\varphi(r_0)}{\varphi(R)} \leq \frac{2a^3}{\sigma_0} \left( \frac{R}{r_0} \right)^{2-\us} = \frac{4a^3}{\rho_0^2 \sigma_0}.
\end{align*}
Therefore, it follows that
\begin{align*}
1 \leq \frac{C}{R} \left( \sum_{\overline{\mathcal{Q}_j} \cap B_{\rho_0 R} \neq \emptyset} \ve \mathcal{Q}_j \ve \right)^{1/n} + C\varepsilon_0.
\end{align*}
By taking $\varepsilon_0 > 0$ small, we have
\begin{align*}
\lv Q_\frac{R}{2\sqrt{n}} \rv \leq C \sum_{\overline{\mathcal{Q}_j} \cap B_{\rho_0 R} \neq \emptyset} \ve \mathcal{Q}_j \ve,
\end{align*}
for some constant $C>0$ depending on $n, \lambda, \Lambda, a$, and $\sigma_0$. We now use Theorem \ref{t:ABP} to obtain
\begin{align*}
\mu \ve \mathcal{Q}_j \ve 
&\leq \lv \lb y \in 32\sqrt{n}\mathcal{Q}_j : v(y) > \Gamma(y) - C \frac{\uC + \oC}{\uC(r_0)} C_\varphi \left( R^{-2} \uC(R) + \oC(R) \right) d_j^2 \rb \rv \\
&\leq \lv \lb y \in 32\sqrt{n} \mathcal{Q}_j : u(y) \leq \Phi(y) + C \rb \rv \leq \lv \lb y \in 32\sqrt{n} \mathcal{Q}_j : u(y) \leq M_0 \rb \rv
\end{align*}
for some constant $M_0 = \Ve \Phi \Ve_\infty + C$, depending only on $n, \lambda, \Lambda, a$, and $\sigma_0$, where we have used that $d_j \leq R$. Since $\rho_0 = 2^{-8} n^{-1}$, we know that $32\sqrt{n} \mathcal{Q}_j \subset B_\frac{R}{4\sqrt{n}}$ for any $\mathcal{Q}_j$ satisfying $\overline{\mathcal{Q}}_j \cap B_{\rho_0 R} \neq \emptyset$. Taking a subcover of $\lb 32\sqrt{n} \mathcal{Q}_j : \overline{\mathcal{Q}}_j \cap B_{\rho_0 R} \neq \emptyset \rb$ with finite overlapping, we obtain
\begin{align*}
\lv Q_{\frac{R}{2\sqrt{n}}} \rv \leq C \sum_{\overline{\mathcal{Q}_j} \cap B_{\rho_0 R} \neq \emptyset} \ve \mathcal{Q}_j \ve \leq C \lv \lb u \leq M_0 \rb \cap B_{\frac{R}{4\sqrt{n}}} \rv \leq C \lv \lb u \leq M_0 \rb \cap Q_{\frac{R}{2\sqrt{n}}} \rv.
\end{align*}
Thus, we have $\lv \lb u \leq M_0 \rb \cap Q_{\frac{R}{2\sqrt{n}}} \rv > \mu_0 \lv Q_{\frac{R}{2\sqrt{n}}} \rv$.
\end{proof}

\begin{corollary}
Under the same assumptions as in Lemma \ref{l:decay}, we have
\begin{align*}
\lv \lb u > M_0^k \rb \cap Q_\frac{R}{2\sqrt{n}} \rv \leq (1-\mu_0)^k \lv Q_\frac{R}{2\sqrt{n}} \rv
\end{align*}
for all $k \in \mathbb{N}$, and hence
\begin{align*}
\lv \lb u > t \rb \cap Q_\frac{R}{2\sqrt{n}} \rv \leq C R^n t^{-\varepsilon}
\end{align*}
for all $t > 0$, where $C$ and $\varepsilon$ are uniform constants.
\end{corollary}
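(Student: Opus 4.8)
The plan is to first prove, by induction on $k$, the geometric measure decay
\begin{align*}
\lv \lb u > M_0^k \rb \cap Q_\frac{R}{2\sqrt{n}} \rv \leq (1-\mu_0)^k \lv Q_\frac{R}{2\sqrt{n}} \rv,
\end{align*}
and then to read off the polynomial tail bound. The case $k = 1$ is precisely the conclusion of Lemma \ref{l:decay}, rewritten as $\lv \lb u > M_0 \rb \cap Q_\frac{R}{2\sqrt{n}} \rv < (1-\mu_0)\lv Q_\frac{R}{2\sqrt{n}} \rv$. For the step $k \to k+1$ I would invoke the dyadic Calder\'on--Zygmund decomposition of Krylov--Safonov type, exactly as in \cite{CS}: with $\delta = 1-\mu_0$, set $A = \lb u > M_0^{k+1} \rb \cap Q_\frac{R}{2\sqrt{n}}$ and $B = \lb u > M_0^k \rb \cap Q_\frac{R}{2\sqrt{n}}$. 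Since $k \geq 1$ and $M_0 > 1$ we have $A \subset \lb u > M_0 \rb \cap Q_\frac{R}{2\sqrt{n}}$, so the case $k=1$ gives $\lv A \rv \leq \delta \lv Q_\frac{R}{2\sqrt{n}} \rv$; the decomposition lemma then yields $\lv A \rv \leq \delta \lv B \rv \leq (1-\mu_0)^{k+1}\lv Q_\frac{R}{2\sqrt{n}} \rv$ provided we verify the dichotomy: whenever a dyadic subcube $\hat Q$ of $Q_\frac{R}{2\sqrt{n}}$ satisfies $\lv A \cap \hat Q \rv > \delta \lv \hat Q \rv$, its dyadic predecessor $\tilde Q$ is contained in $B$.

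The dichotomy I would establish by contradiction, rescaling to the size of $\hat Q$; this is the only delicate point. If $\tilde Q \not\subset B$, choose $x_0 \in \tilde Q$ with $u(x_0) \leq M_0^k$ and set $\tilde u := M_0^{-k} u \geq 0$. Let $z_0$ be the center of $\hat Q$ and let $\rho$ be $2\sqrt{n}$ times the side length of $\hat Q$, so that $\hat Q = Q_{\rho/(2\sqrt{n})}(z_0)$ and $\rho = 2^{-i}R$ for some $i \geq 1$; a short computation gives $x_0 \in Q_{3\rho/(2\sqrt{n})}(z_0)$ and $Q_{2\rho}(z_0) \subset Q_{2R}$. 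Consequently $\inf_{Q_{3\rho/(2\sqrt{n})}(z_0)} \tilde u \leq 1$, and by positive homogeneity of $\mathcal{M}^-_{\mathcal{L}_0}$,
\begin{align*}
\mathcal{M}^-_{\mathcal{L}_0} \tilde u = M_0^{-k}\mathcal{M}^-_{\mathcal{L}_0} u \leq M_0^{-k}\frac{\uC(R)}{R^2(\uC+\oC)}\varepsilon_0 \quad \text{in } Q_{2\rho}(z_0).
\end{align*}
To apply Lemma \ref{l:decay} to $\tilde u$ at scale $\rho$ we need this last quantity to be at most $\frac{\uC(\rho)}{\rho^2(\uC+\oC)}\varepsilon_0$, i.e. $\uC(R)\rho^2/(R^2\uC(\rho)) \leq M_0^k$; writing $\theta = \rho/R \leq 1/2$ and using \eqref{e:wsc ratio},
\begin{align*}
\frac{\uC(R)}{R^2}\cdot\frac{\rho^2}{\uC(\rho)} = \theta^2\,\frac{\uC(R)}{\uC(\theta R)} \leq \theta^2\left( 1 + a^2\theta^{-2+\us} \right) = \theta^2 + a^2\theta^{\us} \leq 1 + a^2,
\end{align*}
a bound independent of $\us$ and $\os$. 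Since Lemma \ref{l:decay} remains valid if $M_0$ is replaced by any larger value, we may assume $M_0 \geq 1 + a^2$; then all hypotheses of Lemma \ref{l:decay} hold for $\tilde u$ at scale $\rho$, and its conclusion reads $\lv \lb u > M_0^{k+1} \rb \cap \hat Q \rv < (1-\mu_0)\lv \hat Q \rv$, which contradicts $\lv A \cap \hat Q \rv > \delta \lv \hat Q \rv$. This closes the induction. The hard part is exactly this scale-change: the rescaled subsolution must still satisfy the $\varphi$-weighted smallness condition of Lemma \ref{l:decay} with the \emph{same} $\varepsilon_0$, and the uniform ratio estimate above, furnished by \eqref{e:wsc ratio}, is what makes it work; the rest is the standard machinery of \cite{CS}.

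For the tail estimate I set $\varepsilon := \frac{\log\bigl(1/(1-\mu_0)\bigr)}{\log M_0} > 0$, which depends only on $n, \lambda, \Lambda, a$, and $\sigma_0$. Given $t \geq M_0$, pick the integer $k \geq 1$ with $M_0^k \leq t < M_0^{k+1}$; then $M_0^k > t/M_0$ and
\begin{align*}
\lv \lb u > t \rb \cap Q_\frac{R}{2\sqrt{n}} \rv \leq (1-\mu_0)^k \lv Q_\frac{R}{2\sqrt{n}} \rv = M_0^{-\varepsilon k}\lv Q_\frac{R}{2\sqrt{n}} \rv \leq M_0^{\varepsilon}t^{-\varepsilon}\lv Q_\frac{R}{2\sqrt{n}} \rv,
\end{align*}
while for $0 < t < M_0$ one simply bounds $\lv \lb u > t \rb \cap Q_\frac{R}{2\sqrt{n}} \rv \leq \lv Q_\frac{R}{2\sqrt{n}} \rv \leq M_0^{\varepsilon}t^{-\varepsilon}\lv Q_\frac{R}{2\sqrt{n}} \rv$. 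Since $\lv Q_\frac{R}{2\sqrt{n}} \rv = (2\sqrt{n})^{-n}R^n$, this yields the asserted bound with $C = (2\sqrt{n})^{-n}M_0^{\varepsilon}$.
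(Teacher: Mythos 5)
Your proof is correct, and it supplies the Calder\'on--Zygmund (Krylov--Safonov) argument that the paper leaves implicit, stating the corollary without proof in the style of \cite{CS}. The induction, the dichotomy for dyadic cubes, and the derivation of the polynomial tail from the geometric decay all go through exactly as you wrote them.

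The one genuinely nontrivial adaptation here, which you identify and handle correctly, is that the class $\mathcal{L}_0$ is \emph{not} scale-invariant when $\varphi$ is a general weak-scaling function, so that passing from scale $R$ to scale $\rho = \theta R$ changes the right-hand side threshold in Lemma \ref{l:decay} from $\varepsilon_0 \uC(R)/(R^2(\uC+\oC))$ to $\varepsilon_0 \uC(\rho)/(\rho^2(\uC+\oC))$, and one must check that the amplitude rescaling $\tilde u = M_0^{-k}u$ provides enough room. Your use of \eqref{e:wsc ratio} to get the uniform bound $\theta^2\,\uC(R)/\uC(\theta R) \leq \theta^2 + a^2\theta^{\us} \leq 1+a^2$ for all $\theta \in (0,1]$ is precisely the mechanism that makes this work uniformly in $\us, \os$; and the observation that Lemma \ref{l:decay} is monotone in $M_0$, so that one may harmlessly take $M_0 \geq 1+a^2$, cleanly closes the gap. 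In the pure power case $\varphi(r) = r^\sigma$ this factor is simply $\theta^{2-\sigma} \leq 1$ and the step is invisible; spelling it out is exactly what the variable-order setting demands. Two small remarks on bookkeeping, neither of which is a gap: applying Lemma \ref{l:decay} centered at $z_0$ rather than the origin uses translation invariance of $\mathcal{M}^-_{\mathcal{L}_0}$, which you use tacitly and is of course valid here; and your choice $\varepsilon = \log(1/(1-\mu_0))/\log M_0$ together with the bound $M_0^{-\varepsilon k} \leq M_0^\varepsilon t^{-\varepsilon}$ for $M_0^k \leq t$ is the standard conversion and is carried out correctly, including the trivial range $t < M_0$.
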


By the standard covering argument, we deduce the weak Harnack inequality as follows.

\begin{theorem} [Weak Harnack inequality] \label{t:weakHI}
Assume $\us \geq \sigma_0 > 0$. Let $u$ be a nonnegative function in $\mathbb{R}^n$ such that
\begin{align*}
\mathcal{M}_{\mathcal{L}_0}^- u \leq C_0 \quad \text{in} ~ B_{2R}
\end{align*}
in the viscosity sense. Then we have
\begin{align*}
\lv \lb u > t \rb \cap B_R \rv \leq CR^n \left( u(0) + C_0 R^2 \frac{\uC + \oC}{\uC(R)} \right)^\varepsilon t^{-\varepsilon} \quad \text{for all} ~ t > 0,
\end{align*}
and hence
\begin{align*}
\left( \fint_{B_R} \ve u \ve^p \right)^{1/p} \leq C \left( u(0) + C_0 R^2 \frac{\uC + \oC}{\uC(R)} \right),
\end{align*}
where $C > 0, \varepsilon > 0$, and $p > 0$ are uniform constants depending only on $n, \lambda, \Lambda, a$, and $\sigma_0$.
\end{theorem}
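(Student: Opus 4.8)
Here is how I would approach proving Theorem~\ref{t:weakHI}.

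\emph{Overview.} The plan is to derive both assertions from the power-decay estimate of the Corollary following Lemma~\ref{l:decay}, after a normalization and a standard Krylov--Safonov covering (chaining) argument. The analytically substantial work --- the ABP estimate, the barrier, and the Calder\'on--Zygmund iteration producing the cube estimate --- is already contained in Theorem~\ref{t:ABP}, Lemma~\ref{l:subsoln}, Lemma~\ref{l:decay}, and that Corollary, so what remains is bookkeeping; the one genuinely delicate point, discussed at the end, is keeping every constant uniform through the chaining.

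\emph{Normalization.} Set $A := u(0) + C_0 R^2 \frac{\uC+\oC}{\uC(R)}$, and assume $A>0$ (if $A=0$, replacing $A$ by $\delta>0$ and letting $\delta\to 0^+$ forces $u\equiv 0$ a.e.\ in $B_R$). I would work with $\tilde u := \frac{\varepsilon_0}{c_*(n,a)A}u$, where $\varepsilon_0\in(0,1)$ is the constant of Lemma~\ref{l:decay} and $c_*(n,a)\geq 1$ is fixed below. Then $\tilde u\geq 0$, $\tilde u(0)\leq 1$, and by $1$-homogeneity of $\mathcal{M}^-_{\mathcal{L}_0}$ one has $\mathcal{M}^-_{\mathcal{L}_0}\tilde u \leq \frac{\varepsilon_0 C_0}{c_*(n,a)A} \leq \frac{\varepsilon_0}{c_*(n,a)}\frac{\uC(R)}{R^2(\uC+\oC)}$ in $B_{2R}$. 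Fixing a dimensional scale $\rho:=c_0(n)R$ small enough that $Q_{2\rho}(z)\subset B_{2R}$ for all $z\in B_R$, the weak scaling condition --- via the comparability of $\uC(\rho)/\rho^2$ and $\uC(R)/R^2$ encoded in the estimates of Lemma~\ref{l:wsc} --- yields $\frac{\uC(R)}{R^2}\leq c_*(n,a)\frac{\uC(\rho)}{\rho^2}$, hence $\mathcal{M}^-_{\mathcal{L}_0}\tilde u \leq \varepsilon_0\frac{\uC(\rho)}{\rho^2(\uC+\oC)}$ in $B_{2R}$. Undoing the scaling at the end, it therefore suffices to prove: if $v\geq 0$, $v(0)\leq 1$, and $\mathcal{M}^-_{\mathcal{L}_0}v\leq\varepsilon_0\frac{\uC(\rho)}{\rho^2(\uC+\oC)}$ in $B_{2R}$ with $\rho=c_0(n)R$, then $\ve\lb v>t\rb\cap B_R\ve\leq CR^n t^{-\varepsilon}$ for all $t>0$, with $C,\varepsilon$ uniform.

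\emph{The covering argument.} By translation invariance of the kernels in $\mathcal{L}_0$, the Corollary following Lemma~\ref{l:decay}, applied around any point $z$ with $v(z)\leq 1$ at the scale $\rho$ above (so that $Q_{2\rho}(z)\subset B_{2R}$), gives $\ve\lb v>t\rb\cap Q_{\rho/(2\sqrt n)}(z)\ve\leq C\rho^n t^{-\varepsilon}\leq CR^n t^{-\varepsilon}$. I would then cover $B_R$ by $N=N(n)$ cubes $Q^{(1)},\dots,Q^{(N)}$ of a small dimensional side (small relative to $\rho$), with $0\in Q^{(1)}$ and arranged in a chain so that each $Q^{(i)}$, $i\geq 2$, is contained in the conclusion cube of a Corollary application centered at any point of an earlier $Q^{(j)}$. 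By induction one produces, for each $i$, a point $z_i\in Q^{(i)}$ with $v(z_i)\leq K_i$ for a constant $K_i$ depending only on $n,\lambda,\Lambda,a,\sigma_0$: take $z_1=0$, $K_1=1$; at the step, apply the displayed estimate around $z_j$ to $v/K_j$ (legitimate since $K_j\geq 1$ only decreases $\mathcal{M}^-_{\mathcal{L}_0}v$), obtaining $\ve\lb v>t\rb\cap Q^{(i)}\ve\leq CK_j^\varepsilon R^n t^{-\varepsilon}$, and then choose $t=K_i:=C'(n)K_j$ large enough that the right side is $<\ve Q^{(i)}\ve$, which forces $\lb v\leq K_i\rb\cap Q^{(i)}$ to have positive measure and so furnishes $z_i$. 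After $N$ steps all $K_i\leq K_*=K_*(n,\lambda,\Lambda,a,\sigma_0)$; applying the displayed estimate once more around each $z_i$ (to $v/K_*$) and summing over $i$ gives $\ve\lb v>t\rb\cap B_R\ve\leq\sum_{i}\ve\lb v>t\rb\cap Q^{(i)}\ve\leq NCK_*^\varepsilon R^n t^{-\varepsilon}$, as required; reinstating the normalization yields $\ve\lb u>t\rb\cap B_R\ve\leq CR^n A^\varepsilon t^{-\varepsilon}$.

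\emph{From the distributional estimate to $L^p$, and the obstacle.} For any $0<p<\varepsilon$ the layer-cake formula gives $\fint_{B_R}\ve u\ve^p=\frac{p}{\ve B_R\ve}\int_0^\infty t^{p-1}\ve\lb u>t\rb\cap B_R\ve\,dt\leq\frac{p}{\ve B_R\ve}\int_0^\infty t^{p-1}\min\lb\ve B_R\ve,\,CR^n A^\varepsilon t^{-\varepsilon}\rb dt$; splitting the integral at $t_0$ with $\ve B_R\ve=CR^n A^\varepsilon t_0^{-\varepsilon}$ (so $t_0\simeq A$) and integrating the two pieces bounds the right side by $CA^p$, whence $(\fint_{B_R}\ve u\ve^p)^{1/p}\leq CA=C(u(0)+C_0R^2\frac{\uC+\oC}{\uC(R)})$, with $p,\varepsilon,C$ uniform. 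The main obstacle is not any single estimate but the uniformity bookkeeping in the covering step: ensuring every enlarged cube $Q_{2\rho}(z_i)$ stays inside $B_{2R}$, that the smallness threshold $\varepsilon_0\frac{\uC(\rho)}{\rho^2(\uC+\oC)}$ for $\mathcal{M}^-_{\mathcal{L}_0}$ is genuinely met at the fixed scale $\rho=c_0(n)R$ (which is where the weak scaling condition, hence Lemma~\ref{l:wsc}, is essential, since the naive rescaling of $\uC(R)/R^2$ involves the orders $\us,\os$), and that the chain has length $N(n)$ so the compounded constant $K_*$ --- and thus the final $C$ and $\varepsilon$ --- depends only on $n,\lambda,\Lambda,a$, and $\sigma_0$.
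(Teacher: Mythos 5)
Your proof is correct and matches the paper's approach: the paper states only that the weak Harnack inequality follows from the Corollary to Lemma~\ref{l:decay} "by the standard covering argument," and your normalization (using Lemma~\ref{l:wsc} and \eqref{e:wsc ratio} to compare $\uC(R)/R^2$ with $\uC(\rho)/\rho^2$ uniformly in $\us,\os$), Krylov--Safonov chaining, and layer-cake step are exactly what that phrase refers to. The only cosmetic slip is that the chaining increment $C'$ should depend on $n,\lambda,\Lambda,a,\sigma_0$ (through $\varepsilon$) rather than on $n$ alone, but this does not affect uniformity of the final constants.
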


\subsection{Harnack Inequality} \label{s:4.4 Harnack}

This section is devoted to the proof of Harnack inequality for fully nonlinear elliptic integro-differential operators with respect to $\mathcal{L}_0$, where the constant depends only on $n, \lambda ,\Lambda, a$, and $\sigma_0$.

\begin{theorem} [Harnack inequality]
Assume $\us \geq \sigma_0 > 0$. Let $u \in C(B_{2R})$ be a nonnegative function in $\Rn$ such that
\begin{align*}
\mathcal{M}^-_{\mathcal{L}_0} u \leq C_0 \frac{\uC(R)}{(\uC + \oC)R^2} \quad \text{and} \quad \mathcal{M}^+_{\mathcal{L}_0} u \geq - C_0 \frac{\uC(R)}{(\uC + \oC)R^2} \quad \text{in} ~ B_{2R}
\end{align*}
in the viscosity sense. Then there exists a uniform constant $C > 0$, depending only on $n, \lambda, \Lambda, a$, and $\sigma_0$, such that
\begin{align*}
\sup_{B_{R/2}} u \leq C \left( u(0) + C_0 \right).
\end{align*}
\end{theorem}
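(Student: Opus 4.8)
The plan is to obtain this scale‑invariant Harnack inequality by combining the two halves of the estimate that are already available: the weak Harnack inequality (Theorem~\ref{t:weakHI}), which comes from the supersolution inequality $\mathcal{M}^-_{\mathcal{L}_0}u\le C_0(\cdots)$ and furnishes the super‑level set bound $\ve\{u>t\}\cap B_\rho\ve\le C_1\rho^n t^{-\varepsilon}$ for every $\rho\le R$ and $t>0$ (with $C_1,\varepsilon$ depending only on $n,\lambda,\Lambda,a,\sigma_0$; the uniformity in $\rho$ comes from \eqref{e:wsc ratio}), and the power decay Lemma~\ref{l:decay}, which converts the subsolution inequality $\mathcal{M}^+_{\mathcal{L}_0}u\ge -C_0(\cdots)$ into a quantitative spreading of large values. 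These are merged by the point‑selection (``growing oscillations'') argument of Caffarelli and Silvestre \cite{CS}; the only point needing genuinely new input beyond the fractional Laplacian case is the bookkeeping of how the normalizing constant $C_\varphi$ and the profiles $\uC(\cdot),\oC(\cdot)$ transform under dilation, which is exactly what Lemmas~\ref{l:wsc} and \ref{l:bound} are designed for.

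First I would normalize. Setting $K:=u(0)+C_0$, the case $K=0$ is immediate (Theorem~\ref{t:weakHI} forces $u\equiv0$ in $B_R$), so suppose $K>0$ and replace $u$ by $u/K$; then $u\ge0$ in $\Rn$, $u(0)\le1$, and, since $C_0\le K$, the hypotheses become $\mathcal{M}^-_{\mathcal{L}_0}u\le\frac{\uC(R)}{(\uC+\oC)R^2}$ and $\mathcal{M}^+_{\mathcal{L}_0}u\ge-\frac{\uC(R)}{(\uC+\oC)R^2}$ in $B_{2R}$. It remains to prove $\sup_{B_{R/2}}u\le C$ with $C$ depending only on $n,\lambda,\Lambda,a,\sigma_0$, and from Theorem~\ref{t:weakHI} I record $\ve\{u>t\}\cap B_\rho\ve\le C_1\rho^n t^{-\varepsilon}$ for $\rho\le R$.

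The core is the point selection. Arguing by contradiction, suppose $\sup_{B_{R/2}}u>M$ for a large $M=M(n,\lambda,\Lambda,a,\sigma_0)$ to be fixed, pick $x_0\in B_{R/2}$ with $u(x_0)>M$, and fix $\nu>1$ so close to $1$ that $\nu-M_0(\nu-1)>\tfrac12$, where $M_0,\mu_0$ are the constants of Lemma~\ref{l:decay}. Inductively, given $x_{j-1}\in B_{3R/4}$ with $u(x_{j-1})>M\nu^{j-1}=:t_j$, choose the scale $\ell_j$ so that $\ve Q_{\ell_j/(2\sqrt n)}(x_{j-1})\ve$ equals a fixed (large, universal) multiple of $C_1R^nt_j^{-\varepsilon}$; since $t_j\ge M$, this makes $\ell_j$ — and indeed $\sum_i\ell_i$ — as small as needed once $M$ is large, so all points stay in $B_{3R/4}$ and all cubes in $B_{2R}$. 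If $u\le\nu t_j$ on $Q_{100\sqrt n\,\ell_j}(x_{j-1})$, then $w:=(\nu t_j-u)/((\nu-1)t_j)$ is, by $\mathcal{M}^+_{\mathcal{L}_0}u\ge-\frac{\uC(R)}{(\uC+\oC)R^2}$, a viscosity supersolution $\mathcal{M}^-_{\mathcal{L}_0}w\le\frac{1}{(\nu-1)t_j}\cdot\frac{\uC(R)}{(\uC+\oC)R^2}$ near $x_{j-1}$, with $\inf_{Q_{3\ell_j/(2\sqrt n)}(x_{j-1})}w\le w(x_{j-1})\le1$; after replacing $w$ by $w^+$ to make it nonnegative on $\Rn$ (which remains a viscosity supersolution with the right‑hand side increased only by a tail term supported where $u>\nu t_j$, hence at distance $\gtrsim\ell_j$ from $Q_{2\ell_j}(x_{j-1})$, controlled using $u\ge0$ as in \cite{CS}), the inequality \eqref{e:wsc ratio} gives
\begin{equation*}
\frac{1}{(\nu-1)t_j}\cdot\frac{\uC(R)}{(\uC+\oC)R^2}\le\frac{(1+a^2)(\ell_j/R)^{\sigma_0}}{(\nu-1)t_j}\cdot\frac{\uC(\ell_j)}{\ell_j^2(\uC+\oC)}\le\varepsilon_0\,\frac{\uC(\ell_j)}{\ell_j^2(\uC+\oC)}
\end{equation*}
for $M$ large, so that Lemma~\ref{l:decay} applies at scale $\ell_j$ to $w$. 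Its conclusion, rewritten in terms of $u$, reads $\ve\{u\ge(\nu-M_0(\nu-1))t_j\}\cap Q_{\ell_j/(2\sqrt n)}(x_{j-1})\ve>\mu_0\ve Q_{\ell_j/(2\sqrt n)}\ve$; the left side is at most $\ve\{u>t_j/2\}\cap B_R\ve\le C_1R^n(t_j/2)^{-\varepsilon}$, while by the choice of $\ell_j$ the right side is strictly larger — a contradiction. Hence this alternative cannot occur, so there must be $x_j\in Q_{100\sqrt n\,\ell_j}(x_{j-1})$ with $u(x_j)>\nu t_j=M\nu^j$, and $\ve x_j-x_{j-1}\ve\lesssim\ell_j$; iterating produces $x_j\to x_\infty\in\overline{B_{3R/4}}\subset B_{2R}$ with $u(x_j)\to\infty$, contradicting $u\in C(B_{2R})$. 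Thus $\sup_{B_{R/2}}u\le M$, and undoing the normalization proves the theorem.

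The step I expect to be the main obstacle is exactly the displayed chain of inequalities: one must certify that $\mathcal{M}^-_{\mathcal{L}_0}w$ stays below the smallness threshold $\varepsilon_0\frac{\uC(\ell_j)}{\ell_j^2(\uC+\oC)}$ of Lemma~\ref{l:decay} with constants independent of $\us$ and $\os$. This is where \eqref{e:wsc ratio} — comparing $\uC(R)/R^2$ with $\uC(\ell_j)/\ell_j^2$ by a factor depending only on $a$ and $\ell_j/R$, and using $\us\ge\sigma_0$ — and the bounds for $C_\varphi$ of Lemma~\ref{l:bound} are essential: together they turn the gain $t_j^{-1}$ coming from the subsolution side into a quantity small for $M$ large, at a rate governed solely by $\sigma_0$ and not by the possibly degenerate gap $2-\os$. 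The companion point — estimating the nonlocal tail of $u$ produced by the truncation of $w$ — is the remaining technicality, and is carried out as in \cite{CS} using $u\ge0$ in $\Rn$; this is also the only place where care is needed about the merely local continuity of $u$, exactly as in the fractional case.
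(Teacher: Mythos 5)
Your proposal takes a genuinely different route from the paper: you argue by iterative point selection (a ``growing oscillations'' chain $x_0,x_1,\dots$ with $u(x_j)>M\nu^j$ at shrinking scales $\ell_j$), whereas the paper follows \cite{CS} by choosing the minimal $\alpha$ with $u\le h_\alpha(x)=\alpha(1-|x|/R)^{-\gamma}$ on $B_R$, picking the touching point $x_0$, and then applying the weak Harnack inequality and Lemma~\ref{l:decay} at a single scale $\theta r$ around $x_0$ to bound $\alpha$. (Incidentally, the parameter $\gamma=(n+2)/\varepsilon$ you introduce plays no role in your chain argument; it belongs to the paper's $h_\alpha$ construction.)

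The genuine gap is the tail estimate for $\mathcal{M}^+_{\mathcal{L}_0}w^-$ after truncation. You need $\mathcal{M}^-_{\mathcal{L}_0}w^+\le\varepsilon_0\,\uC(\ell_j)/(\ell_j^2(\uC+\oC))$ near $x_{j-1}$, and while your displayed chain handles the $\mathcal{M}^-_{\mathcal{L}_0}w$ part, the truncation error is
$\mathcal{M}^+_{\mathcal{L}_0}w^-(x)\le\frac{2\Lambda C_\varphi}{(\nu-1)t_j}\int_{|y|\gtrsim\ell_j}\frac{(u(x+y)-\nu t_j)^+}{|y|^n\varphi(|y|)}\,dy$,
and this integral cannot be controlled by $u\ge0$ alone: the far field $(u-\nu t_j)^+$ is not a priori small, and even inside $B_R$ the weak Harnack inequality gives only a measure bound on $\{u>\nu t_j\}$, not an $L^1$ bound against the kernel. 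The paper closes this by a separate and essential device: it introduces the paraboloid $g_\beta(x)=\beta(1-|4x|^2/R^2)^+$ with maximal $\beta$, touches $u$ from below at a point $x_1\in B_{R/4}$, uses the supersolution inequality \emph{at $x_1$} to get the a priori nonlocal bound $C_\varphi\int(u(x_1+y)-2)^+/(|y|^n\varphi(|y|))\,dy\le C(\uC(R)/R^2+\oC(R))/(\uC+\oC)$, and then transports this bound from $x_1$ to points near $x_0$ via the kernel-ratio estimates $|x+y-x_1|^n/|y|^n\lesssim(R/(\theta r))^n$ and $\varphi(|x+y-x_1|)/\varphi(|y|)\lesssim a(R/(\theta r))^2$ at the price of a factor $(R/(\theta r))^{n+2}$. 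Your proposal invokes none of this; saying it is ``controlled using $u\ge0$ as in \cite{CS}'' conflates the positivity of $u$ with the nontrivial touching-point mechanism that actually supplies the tail bound. Moreover, if you did import that mechanism into your chain, you would have to transport the bound to \emph{each} $x_{j-1}$ with factor $(R/\ell_j)^{n+2}\sim t_j^{\varepsilon(n+2)/n}$, and the net tail contribution scales like $t_j^{-1+\varepsilon(n+2-\us)/n}$ after accounting for $\varphi(\ell_j)/\varphi(R)$, so you additionally need $\varepsilon<n/(n+2-\sigma_0)$ (attainable by shrinking $\varepsilon$, but not stated or verified). As written, the proposal therefore does not constitute a proof; the step you flag as ``the remaining technicality'' is precisely where the substance lies.
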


\begin{proof}
We may assume that $u > 0$, $u(0) \leq 1$, and $C_0 = 1$. Let $\varepsilon > 0$ be the constant as in Theorem \ref{t:weakHI} and let $\gamma = (n+2)/\varepsilon$. Consider the minimal value of $\alpha > 0$ such that 
\begin{align*}
u(x) \leq h_\alpha(x) := \alpha \left( 1- \frac{\ve x \ve}{R} \right)^{-\gamma} \quad \text{for all} ~ x \in B_R,
\end{align*}
so that there exists $x_0 \in B_R$ satisfying $u(x_0) = h_\alpha(x_0)$. It is enough to show that $\alpha$ is uniformly bounded. 

Let $d = R - \ve x_0 \ve, r = d/2$, and let $A = \lb u > u(x_0) / 2 \rb$. Then $u(x_0) = h_\alpha(x_0) = \alpha R^\gamma d^{-\gamma}$. By the weak Harnack inequality, we have
\begin{align*}
\lv A \cap B_R \rv \leq C R^n \left( \frac{4}{u(x_0)} \right)^\varepsilon \leq C\alpha^{-\varepsilon} R^n \left( \frac{d}{R} \right)^{\gamma \varepsilon} \leq C \alpha^{-\varepsilon} d^n.
\end{align*}
Since $B_r(x_0) \subset \subset B_R$ and $r = d/2$, we obtain
\begin{align*}
\lv \lb u > u(x_0) / 2 \rb \cap B_r(x_0) \rv \leq C \alpha^{-\varepsilon} \ve B_r(x_0) \ve.
\end{align*}
We will show that there exists a uniform constant $\theta > 0$ such that $\ve \lb u < u(x_0) / 2 \rb \cap B_{\theta r/4}(x_0) \ve \leq \frac{1}{2} \ve B_{\theta r/4} \ve$ for a large constant $\alpha > 1$, which yields that $\alpha > 0$ is uniformly bounded.

We first estimate $\lv \lb u < u(x_0) / 2 \rb \cap B_{\theta r}(x_0) \rv$ for small $\theta > 0$, which will be chosen uniformly later. For every $x \in B_{\theta r}(x_0)$,
\begin{align*}
u(x) \leq h_\alpha(x) \leq \alpha \left( \frac{d-\theta r}{R} \right)^{-\gamma} = \alpha \left( \frac{d}{R} \right)^{-\gamma} \left( 1- \frac{\theta}{2} \right)^{-\gamma} = \left( 1- \frac{\theta}{2} \right)^{-\gamma} u(x_0). 
\end{align*}
Consider the function
\begin{align*}
v(x) := \left( 1- \frac{\theta}{2} \right)^{-\gamma} u(x_0) - u(x).
\end{align*}
Note that $v$ is nonnegative in $B_{\theta r}(x_0)$. To apply the weak Harnack inequality to $w := v^+$, we compute $\mathcal{M}_{\mathcal{L}_0}^- w$ in $B_{\theta r}(x_0)$. For $x \in B_{\theta r}(x_0)$, 
\begin{align}
\mathcal{M}^-_{\mathcal{L}_0} w(x)
&\leq \mathcal{M}_{\mathcal{L}_0}^- v(x) + \mathcal{M}_{\mathcal{L}_0}^+ v^-(x) \nonumber \\
&\leq - \mathcal{M}^+_{\mathcal{L}_0} u(x) + C_\varphi \int_{\Rn} \frac{\Lambda v^-(x+y) + \Lambda v^-(x-y)}{\ve y \ve^n \varphi(\ve y \ve)} \, dy \nonumber \\
&= \frac{\uC(R)}{R^2(\uC + \oC)} + 2\Lambda C_\varphi \int_{\lb v(x+y) < 0 \rb} \frac{v^-(x+y)}{\ve y \ve^n \varphi(\ve y \ve)} \, dy \nonumber \\
&\leq \frac{\uC(R)}{R^2(\uC + \oC)} + 2\Lambda C_\varphi \int_{B^c_{\theta r}(x_0 - x)} \frac{\left( u(x+y) - (1-\frac{\theta}{2})^{-\gamma} u(x_0) \right)^+}{\ve y \ve^n \varphi(\ve y \ve)} dy \label{e:Mw}
\end{align}
in the viscosity sense.

Consider the largest number $\beta > 0$ such that
\begin{align*}
u(x) \geq g_\beta(x) := \beta \left( 1- \frac{\ve 4x \ve^2}{R^2} \right)^+,
\end{align*}
and let $x_1 \in B_\frac{R}{4}$ be a point such that $u(x_1) = g_\beta(x_1)$. This is possible because we have assumed that $u > 0$ in $B_{2R}$. We observe that $\beta \leq 1$ since $u(0) \leq 1$. We estimate
\begin{align*}
C_\varphi \int_{\Rn} \frac{\delta^-(u, x_1, y)}{\ve y \ve^n \varphi(\ve y \ve)} \, dy
&\leq C_\varphi \int_{\Rn} \frac{\delta^-(g_\beta, x_1, y)}{\ve y \ve^n \varphi( \ve y \ve)} \, dy \\
&\leq CC_\varphi \left( \int_{B_R} \frac{\ve y \ve^2}{R^2} \frac{1}{\ve y \ve^n \varphi( \ve y \ve)} \, dy + \int_{\Rn \setminus B_R} \frac{dy}{\ve y \ve^n \varphi(\ve y \ve)} \right) \\
&\leq \frac{C}{\uC + \oC} \left( \frac{\uC(R)}{R^2} + \oC(R) \right).
\end{align*}
Since
\begin{align*}
\frac{\uC (R)}{R^2(\uC + \oC)} \geq \mathcal{M}^-_{\mathcal{L}_0} u = C_\varphi \int_{\Rn} \frac{\lambda \delta^+(u, x_1, y) - \Lambda \delta^-(u, x_1, y)}{\ve y \ve^n \varphi(\ve y \ve)} \, dy, 
\end{align*}
we obtain
\begin{align*}
C_\varphi \int_{\Rn} \frac{\delta^+(u, x_1, y)}{\ve y \ve \varphi(\ve y \ve)} \, dy \leq \frac{C}{\uC + \oC} \left( \frac{\uC(R)}{R^2} + \oC (R) \right).
\end{align*}
Since $u(x_1) \leq \beta \leq 1$ and $u(x-y) > 0$, we have
\begin{align*}
C_\varphi \int_{\Rn} \frac{(u(x_1 + y) - 2)^+}{\ve y \ve \varphi(\ve y \ve)} \, dy \leq \frac{C}{\uC + \oC} \left( \frac{\uC(R)}{R^2} + \oC(R) \right).
\end{align*}
If $u(x_0) \leq 2$, then $\alpha = u(x_0) d^\gamma \leq 2$, which gives a uniform bound for $\alpha$. Assume that $u(x_0) > 2$, then we can estimate the second term of \eqref{e:Mw} for $x \in B_{\theta r/ 2}(x_0)$ as follows:
\begin{align*}
&\int_{B^c_{\theta r}(x_0 - x)} \frac{\left( u(x+y) - (1-\frac{\theta}{2})^{-\gamma}u(x_0) \right)^+}{\ve y \ve^n \varphi(\ve y \ve)} \, dy \\
&\leq \int_{B^c_{\theta r}(x_0 - x)} \frac{\left( u(x+y) - 2 \right)^+}{\ve y \ve^n \varphi(\ve y \ve)} \, dy \\
&= \int_{B^c_{\theta r}(x_0 - x)} \frac{\left( u(x_1 + x + y - x_1) - 2 \right)^+}{\ve x + y - x_1 \ve^n \varphi( \ve x+y-x_1 \ve)} \frac{\ve x + y - x_1 \ve^n \varphi( \ve x+y-x_1 \ve)}{\ve y \ve^n \varphi(\ve y \ve)} \, dy.
\end{align*}
We see that for $x \in B_{\theta r/2}(x_0)$ and $y \in \R \setminus B_{\theta r}(x_0 - x)$, 
\begin{align*}
\frac{\ve x+y-x_1 \ve^n}{\ve y \ve^n} \leq \left( \frac{\ve x-x_0 \ve + \ve x_0 \ve + \ve x_1 \ve}{\ve y \ve} + 1\right)^n \leq \left( \frac{CR}{\theta r} + 1 \right)^n \leq C \left( \frac{R}{\theta r} \right)^n
\end{align*}
and
\begin{align*}
\frac{\varphi( \ve x+y-x_1 \ve)}{\varphi(\ve y \ve)} \leq \frac{\varphi(CR + \ve y \ve)}{\varphi(\ve y \ve)} \leq a \left( \frac{R}{\theta r} \right)^{\os} \leq a \left( \frac{R}{\theta r} \right)^2.
\end{align*}
Therefore, we obtain
\begin{align*}
\mathcal{M}_{\mathcal{L}_0}^- w(x) 
&\leq \frac{\uC(R)}{R^2(\uC + \oC)} + \frac{C}{\uC + \oC} \left( \frac{R}{\theta r} \right)^{n+2} \left( \frac{\uC (R)}{R^2} + \oC(R) \right) \\
&\leq \frac{C}{\uC + \oC} \left( \frac{R}{\theta r} \right)^{n+2} \left( \frac{\uC(R)}{R^2} + \oC(R) \right).
\end{align*}
Now we apply the weak Harnack inequality to $w$ in $B_{\theta r/2}(x_0)$ to obtain that
\begin{align*}
&\lv \lb u < \frac{u(x_0)}{2} \rb \cap B_\frac{\theta r}{4}(x_0) \rv = \lv \lb w > \left( \left( 1-\frac{\theta}{2} \right)^{-\gamma} - \frac{1}{2} \right) u(x_0) \rb \cap B_\frac{\theta r}{4}(x_0) \rv \\
&\leq C (\theta r)^n \left( w(x_0) + C \left( \frac{R}{\theta r} \right)^{n+2} \left( 1 + \frac{R^2 \oC (R)}{\uC (R)} \right) \right)^\varepsilon \left( \left( 1-\frac{\theta}{2} \right)^{-\gamma} - \frac{1}{2} \right)^{-\varepsilon} \frac{1}{u(x_0)^\varepsilon}. 
\end{align*}
By inequalities \eqref{e:wsc 0} and \eqref{e:wsc infty}, we have
\begin{align*}
\frac{R^2 \oC (R)}{\uC (R)} \leq R^2 \left( \frac{a}{\us \varphi(R)} \right) \left( \frac{R^2}{a (2-\us) \varphi(R)} \right)^{-1} = a^2 \frac{2-\us}{\us} \leq \frac{2a^2}{\sigma_0}.
\end{align*}
Thus, we have
\begin{align*}
&\lv \lb u < \frac{u(x_0)}{2} \rb \cap B_\frac{\theta r}{4}(x_0) \rv \\
&\leq C (\theta r)^n \left( \frac{\left( \left( 1- \frac{\theta}{2} \right)^{-\gamma} - 1 \right) u(x_0) + C \left( \frac{R}{\theta r} \right)^{n+2}}{u(x_0)} \right)^\varepsilon \\
&\leq C(\theta r)^n \left( \left( \left( 1- \frac{\theta}{2} \right)^{-\gamma} - 1 \right)^\varepsilon + C \left( \frac{R}{\theta r} \right)^{(n+2) \varepsilon} \frac{1}{u(x_0)^\varepsilon} \right).
\end{align*}
Since $u(x_0) = \alpha (R/2r)^\gamma$ and $\gamma = (n+2)/\varepsilon$, we have
\begin{align*}
&\lv \lb u < \frac{u(x_0)}{2} \rb \cap B_\frac{\theta r}{4}(x_0) \rv \\
&\leq C(\theta r)^n \left( \left( \left( 1- \frac{\theta}{2} \right)^{-\gamma} - 1 \right)^\varepsilon + C \left( \frac{R}{\theta r} \right)^{(n+2 - \gamma) \varepsilon} \alpha^{-\varepsilon} \theta^{-\gamma \varepsilon} \right) \\
&\leq C(\theta r)^n \left( \left( \left( 1- \frac{\theta}{2} \right)^{-\gamma} - 1 \right)^\varepsilon + C \alpha^{-\varepsilon} \theta^{-\gamma \varepsilon} \right).
\end{align*}
We choose a uniform constant $\theta > 0$ sufficiently small so that
\begin{align*}
C(\theta r)^n \left( \left( 1 - \frac{\theta}{2} \right)^{-\gamma} - 1 \right)^\varepsilon \leq \frac{1}{4} \lv B_\frac{\theta r}{4}(x_0) \rv.
\end{align*}
If $\alpha > 0$ is sufficiently large, then we have
\begin{align*}
C(\theta r)^n \theta^{-\gamma \varepsilon} \alpha^{-\varepsilon} \leq \frac{1}{4} \lv B_\frac{\theta r}{4}(x_0) \rv,
\end{align*}
which implies that 
\begin{align*}
\lv \lb u < \frac{u(x_0)}{2} \rb \cap B_\frac{\theta r}{4}(x_0) \rv \leq \frac{1}{2} \lv B_\frac{\theta r}{4}(x_0) \rv.
\end{align*}
Therefore, $\alpha$ is uniformly bounded and the result follows.
\end{proof}

\subsection{H\"older Continuity} \label{s:4.5 Holder}

Theorem \ref{t:Holder} follows from the following lemma by a simple scaling.

\begin{lemma}
Assume that $\us \geq \sigma_0 > 0$. There exists a uniform constant $\varepsilon_0 > 0$, depending only on $n, \lambda, \Lambda, a$, and $\sigma_0$, such that if $-\frac{1}{2} \leq u \leq \frac{1}{2}$ in $\Rn$ and
\begin{align*}
\mathcal{M}^+_{\mathcal{L}_0} u \geq - \varepsilon_0 \frac{\uC (R)}{R^2(\uC + \oC)} \quad \text{and} \quad \mathcal{M}^-_{\mathcal{L}_0} u \leq \varepsilon_0 \frac{\uC (R)}{R^2(\uC + \oC)} \quad \text{in} ~ B_{R},
\end{align*}
then $u \in C^\alpha(B_{R/2})$ and
\begin{align*}
\ve u(x) - u(0) \ve \leq CR^{-\alpha} \ve x \ve^\alpha
\end{align*}
for some uniform constants $\alpha > 0$ and $C > 0$.
\end{lemma}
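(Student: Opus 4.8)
The plan is to run the Krylov--Safonov oscillation--reduction scheme in the nonlocal form of \cite[Theorem~12.1]{CS}; the one genuinely new ingredient is a scale--covariant bound on the ``tail'' contributions of the extremal operators, obtained from the weak scaling condition \eqref{a:wsc} and the hypothesis $\us\ge\sigma_0$.

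\emph{Reduction.} It suffices to produce a universal exponent $\alpha=\alpha(n,\lambda,\Lambda,a,\sigma_0)\in(0,\sigma_0)$ and monotone sequences $(m_k)$ nondecreasing, $(M_k)$ nonincreasing, with $m_0=-\tfrac12$, $M_0=\tfrac12$, $M_k-m_k=4^{-\alpha k}$, and $m_k\le u\le M_k$ in $B_{R4^{-k}}$ for every $k\ge 0$: indeed $u(0)\in[m_k,M_k]$, so for $R4^{-k-1}\le\ve x\ve<R4^{-k}$ one gets $\ve u(x)-u(0)\ve\le 4^{-\alpha k}\le 4^{\alpha}(\ve x\ve/R)^{\alpha}$, which is the claimed estimate. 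The construction is by induction on $k$; suppose $m_j,M_j$ are defined for $j\le k$ and put $\rho=R4^{-k}$.

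\emph{Inductive step.} By the dichotomy, either $u\le\tfrac12(M_k+m_k)$ or $u\ge\tfrac12(M_k+m_k)$ on at least half of $B_{\rho/2}$; after possibly reflecting (replacing $u$ by $-u$ and $(m_j,M_j)$ by $(-M_j,-m_j)$), assume the former. Let $g:=(u-m_k)^-$, which is nonnegative and, since $u\ge m_k$ on $B_\rho$, vanishes on $B_\rho$; then $v:=(u-m_k)^+=(u-m_k)+g\ge 0$ on $\Rn$ and $v=u-m_k$ on $B_\rho$. Since $\mathcal{M}^-_{\mathcal{L}_0}u\le\varepsilon_0\frac{\uC(R)}{R^2(\uC+\oC)}$,
\begin{align*}
\mathcal{M}^-_{\mathcal{L}_0}v\le\mathcal{M}^-_{\mathcal{L}_0}u+\mathcal{M}^+_{\mathcal{L}_0}g\le\varepsilon_0\frac{\uC(R)}{R^2(\uC+\oC)}+\mathcal{M}^+_{\mathcal{L}_0}g \quad\text{on }B_{\rho/2}.
\end{align*}
I would then apply the point estimate (Lemma~\ref{l:decay} and its corollary) together with the weak Harnack inequality (Theorem~\ref{t:weakHI}) to $v$ in $B_{\rho/2}$, using the measure information $\ve\{v>\tfrac12 4^{-\alpha k}\}\cap B_{\rho/2}\ve\le\tfrac12\ve B_{\rho/2}\ve$ furnished by the dichotomy; in the standard way this yields $u\le M_k-\mu\,4^{-\alpha k}$ on $B_{R4^{-k-1}}$ for a universal $\mu\in(0,1)$, whereupon one sets $m_{k+1}:=m_k$, $M_{k+1}:=M_k-(4^{-\alpha k}-4^{-\alpha(k+1)})$ (choosing $\alpha$ small enough that also $4^{-\alpha}\ge 1-\mu$), which keeps $M_{k+1}-m_{k+1}=4^{-\alpha(k+1)}$. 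This is legitimate provided the effective right--hand side $\mathcal{C}_k:=\varepsilon_0\frac{\uC(R)}{R^2(\uC+\oC)}+\sup_{B_{\rho/2}}\mathcal{M}^+_{\mathcal{L}_0}g$ satisfies $\mathcal{C}_k\,\rho^2\,\frac{\uC+\oC}{\uC(\rho)}\le\tfrac14\mu\,4^{-\alpha k}$, and verifying this is the crux.

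\emph{The tail (the main point).} For $x\in B_{\rho/2}$ the function $g$ vanishes on the ball $B_{\rho/2}(x)\subset B_\rho$, so
\begin{align*}
\mathcal{M}^+_{\mathcal{L}_0}g(x)\le\Lambda C_\varphi\int_{\ve y\ve>\rho/2}\frac{g(x+y)+g(x-y)}{\ve y\ve^n\varphi(\ve y\ve)}\,dy,
\end{align*}
and I would split $\{\ve y\ve>\rho/2\}$ into dyadic shells $\ve y\ve\sim 2^m\rho$. On the shell $\ve y\ve\sim 2^m\rho$ (which, for $2^m\rho<R$, lies roughly in the dyadic annulus of index $j\approx k-m/2$) the inductive bound $u\ge m_j$ in $B_{R4^{-j}}$ together with the telescoping identity $(M_j-M_k)+(m_k-m_j)=4^{-\alpha j}-4^{-\alpha k}$ gives $g\le m_k-m_j\le 4^{-\alpha j}-4^{-\alpha k}\le 4^{-\alpha k}(2^{\alpha m}-1)$, while $g\le 1$ on $\ve y\ve\ge R$; and by \eqref{a:wsc}, $\int_{\ve y\ve\sim 2^m\rho}\frac{dy}{\ve y\ve^n\varphi(\ve y\ve)}\sim\varphi(2^m\rho)^{-1}\le a\,2^{-m\us}\varphi(\rho)^{-1}\le a\,2^{-m\sigma_0}\varphi(\rho)^{-1}$, which is exactly where $\us\ge\sigma_0$ enters. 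Summing over $m$,
\begin{align*}
\mathcal{M}^+_{\mathcal{L}_0}g(x)\lesssim\Lambda a\,C_\varphi\,\varphi(\rho)^{-1}\left(S(\alpha,\sigma_0)\,4^{-\alpha k}+4^{-k\sigma_0}\right),
\end{align*}
where $S(\alpha,\sigma_0):=\sum_{m\ge1}(2^{\alpha m}-1)2^{-m\sigma_0}\to 0$ as $\alpha\to 0^+$, uniformly for $\us\ge\sigma_0$. Inserting this into $\mathcal{C}_k\,\rho^2\,\frac{\uC+\oC}{\uC(\rho)}$ and using $C_\varphi(\uC+\oC)\le ac_2$ from \eqref{e:bound} and $\rho^2/(\varphi(\rho)\uC(\rho))\le 2a$ from \eqref{e:wsc 0} (and $\oC(R)\le\frac{a}{\sigma_0}\varphi(R)^{-1}$ from \eqref{e:wsc infty} for the $g\le1$ part), the $g$ term contributes $\lesssim\Lambda a^3c_2\bigl(S(\alpha,\sigma_0)+4^{-k(\sigma_0-\alpha)}\bigr)4^{-\alpha k}$, while the direct term equals $\varepsilon_0\,\frac{\uC(R)}{\uC(\rho)}\,\frac{\rho^2}{R^2}\lesssim\varepsilon_0(1+a^2)4^{-k\sigma_0}\le\varepsilon_0(1+a^2)4^{-\alpha k}$ by \eqref{e:wsc ratio}. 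Hence: choosing $\alpha$ small (depending on $n,\lambda,\Lambda,a,\sigma_0$) makes the $S$--term at most $\tfrac18\mu\,4^{-\alpha k}$; choosing $\varepsilon_0$ small makes the direct term at most $\tfrac18\mu\,4^{-\alpha k}$; and since $\alpha<\sigma_0$, the factor $4^{-k(\sigma_0-\alpha)}$ makes the remaining term at most $\tfrac18\mu\,4^{-\alpha k}$ once $k\ge k_0$ for a universal $k_0$, while for the finitely many $k<k_0$ the trivial bound $\osc_{B_{R4^{-k}}}u\le 1\le 4^{\alpha k_0}4^{-\alpha k}$ suffices after enlarging $C$. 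This closes the induction, and Theorem~\ref{t:Holder} follows by the scaling indicated in the text.

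\emph{Expected main obstacle.} The delicate point is precisely the tail estimate: the exterior contribution of $\mathcal{M}^+_{\mathcal{L}_0}$ must be made smaller than the current oscillation $4^{-\alpha k}$ \emph{uniformly in $\us$ and $\os$} --- only the lower bound $\us\ge\sigma_0$ is available --- which forces a mutually consistent choice of a small exponent $\alpha$, a small $\varepsilon_0$, and a cutoff scale $k_0$, and rests on the scaling estimates of Lemma~\ref{l:wsc} and the two--sided bound \eqref{e:bound} for $C_\varphi$. The monotonicity of $(m_k)$ and $(M_k)$, hence the telescoping identity, is exactly what makes the near--field part of the tail of size $S(\alpha,\sigma_0)\,4^{-\alpha k}$ with $S(\alpha,\sigma_0)\to 0$, rather than merely a fixed multiple of $4^{-\alpha k}$.
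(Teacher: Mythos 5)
Your scheme follows the same Krylov--Safonov iteration as the paper: build $(m_k), (M_k)$ with $M_k - m_k = 4^{-\alpha k}$, use the dichotomy, invoke the weak Harnack inequality, and control the tail of $\mathcal{M}^+_{\mathcal{L}_0}$ acting on the truncated-off negative part. Your dyadic-shell tail estimate is the discrete counterpart of the paper's continuous barrier $v^-(x) \leq 2(\ve 4x\ve^\alpha/(4^{-k}R)^\alpha - 1)$ followed by the explicit evaluation of $\int_{4^{-(k+1)}R}^{\infty} \bigl((16r/4^{-k}R)^\alpha - 1\bigr) r^{-1-\us}\,dr$; both exploit $\alpha < \sigma_0 \leq \us$ to make the contribution vanish with $\alpha$. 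The paper absorbs the far field into the same convergent integral, so it has no separate $4^{-k\sigma_0}$ term and needs no cutoff $k_0$; yours is slightly less economical but morally equivalent.

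However, there is a real slip in the weak-Harnack step. Having normalized so that $u \leq \tfrac12(M_k+m_k)$ on at least half of $B_{\rho/2}$, you work with $v = (u-m_k)^+$, and the measure information you record --- $\ve\lb v > \tfrac12 4^{-\alpha k}\rb \cap B_{\rho/2}\ve \leq \tfrac12\ve B_{\rho/2}\ve$ --- is an \emph{upper} bound on a super-level set. Theorem~\ref{t:weakHI} bounds the measure of a super-level set \emph{from above} in terms of the pointwise value of a nonnegative supersolution; combining it with a measure \emph{upper} bound produces neither a contradiction nor a pointwise upper bound on $v$, so the asserted conclusion $u \leq M_k - \mu 4^{-\alpha k}$ does not follow from what you wrote. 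To lower $M_k$ in this case the correct function is $\tilde v := (M_k - u)^+$ with $\tilde g := (M_k - u)^-$: then $\tilde v \geq \tfrac12 4^{-\alpha k}$ on half of $B_{\rho/2}$, $\mathcal{M}^-_{\mathcal{L}_0}\tilde v \leq -\mathcal{M}^+_{\mathcal{L}_0}u + \mathcal{M}^+_{\mathcal{L}_0}\tilde g$, and weak Harnack now forces $\tilde v(x) \geq \theta\, 4^{-\alpha k}$ on a smaller ball, i.e.\ $u(x) \leq M_k - \theta\, 4^{-\alpha k}$. (Equivalently, keep $v = (u-m_k)^+$ but assume instead that $u \geq \tfrac12(M_k+m_k)$ on half, and raise $m_k$ --- this is exactly the case the paper treats.) The tail estimate for $\tilde g$ is verbatim the one you wrote for $g$, by the symmetry $u\mapsto -u$, so with this correction the rest of your argument closes.
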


\begin{proof}
We will show that there exist an increasing sequence $\lb m_k \rb_{k \geq 0}$ and a decreasing sequence $\lb M_k \rb_{k \geq 0}$ satisfying $m_k \leq u \leq M_k$ in $B_{4^{-k}R}$ and $M_k - m_k = 4^{-\alpha k}$, so that the theorem holds.

For $k = 0$ we choose $m_0 = -\frac{1}{2}$ and $M_0 = \frac{1}{2}$. Now assume that we have sequences up to $m_k$ and $M_k$. We want to show that we can continue the sequences by finding $m_{k+1}$ and $M_{k+1}$.

In the ball $B_{4^{-(k+1)}R}$, either $u \geq (M_k + m_k)/2$ in at least half of the points in measure, or $u \leq (M_k + m_k)/2$ in at least half of the points. Let us say that
\begin{align*}
\lv \lb u \geq \frac{M_k + m_k}{2} \rb \cap B_{4^{-(k+1)}R} \rv \geq \frac{\ve B_{4^{-(k+1)}R} \ve}{2}.
\end{align*}
Define the function
\begin{align*}
v(x) := \frac{u(x) - m_k}{(M_k - m_k)/2}.
\end{align*}
Then $v \geq 0$ in $B_{4^{-k}R}$ by the induction hypothesis, and $\lv \lb v \geq 1 \rb \cap B_{4^{-(k+1)}R} \rv \geq \ve B_{4^{-(k+1)}R} \ve /2$. To apply Theorem \ref{t:weakHI}, we define $w = v^+$. Note that we still have $\lv \lb w \geq 1 \rb \cap B_{4^{-(k+1)}R} \rv \geq \ve B_{4^{-(k+1)}R} \ve /2$. Since $\mathcal{M}^-_{\mathcal{L}_0} u \leq \varepsilon_0 \frac{\uC (R)}{R^2(\uC + \oC)}$ in $B_{R}$, 
\begin{align*}
\mathcal{M}^-_{\mathcal{L}_0} w \leq \mathcal{M}^-_{\mathcal{L}_0} v + \mathcal{M}^+_{\mathcal{L}_0} v^- \leq \frac{\varepsilon_0}{(M_k - m_k)/2} \frac{\uC(R)}{R^2(\uC + \oC)} + \mathcal{M}^+_{\mathcal{L}_0} v^- \quad \text{in} ~ B_R.
\end{align*}
To estimate $\mathcal{M}^+_{\mathcal{L}_0} v^-$, we claim that $v(x) \geq -2\left( \frac{\ve 4x \ve^\alpha}{(4^{-k} R)^\alpha} - 1 \right)$ in $B_{4^{-k}R}$. Indeed, for $x \in B_{4^{-k+j}R} \setminus B_{4^{-k+j-1}R}, 1\leq j \leq k$, 
\begin{align*}
v(x) 
&= \frac{u(x) - m_k}{(M_k - m_k)/2} \geq \frac{m_{k-j} - M_{k-j} + M_k - m_k}{(M_k - m_k)/2} \\
&= -2(4^{\alpha j} - 1) \geq -2 \left( \frac{\ve 4x \ve^\alpha}{(4^{-k} R)^\alpha} - 1 \right),
\end{align*}
and for $x \in B_R^c$, 
\begin{align*}
v(x) 
&\geq \frac{-\frac{1}{2} - M_k + M_k - m_k}{(M_k - m_k)/2} = - (1+2M_k) 4^{\alpha k} + 2 \\
&\geq - 2( 4^{\alpha k} - 1) \geq -2 \left( \frac{\ve 4x \ve^\alpha}{(4^{-k} R)^\alpha} - 1 \right).
\end{align*}
Thus, we have for $x \in B_{3\cdot 4^{-(k+1)}R}$
\begin{align*}
\mathcal{M}^+_{\mathcal{L}_0} v^- (x) 
&\leq 2\Lambda C_\varphi \int_{\Rn} \frac{v^-(x+y)}{\ve y \ve^n \varphi(\ve y \ve)} \, dy = 2\Lambda C_\varphi \int_{v(x+y) < 0} \frac{v^-(x+y)}{\ve y \ve^n \varphi(\ve y \ve)} \, dy \\
&\leq 4\Lambda C_\varphi \int_{x+y \not\in B_{4^{-k}R}} \left( \frac{\ve 4(x+y) \ve^\alpha}{(4^{-k} R)^\alpha} - 1 \right) \frac{dy}{\ve y \ve^n \varphi(\ve y \ve)}.
\end{align*}
If $x \in B_{3\cdot 4^{-(k+1)}R}$ and $x+y \not \in B_{4^{-k}R}$, then $\ve y \ve \geq \ve x+y \ve - \ve x \ve > 4^{-k}R - 3\cdot 4^{-(k+1)}R = 4^{-(k+1)}R$ and $\ve x+y \ve \leq \ve x \ve + \ve y \ve \leq 3 \cdot 4^{-(k+1)}R + \ve y \ve \leq 4\ve y \ve$. Thus we obtain
\begin{align*}
\mathcal{M}^+_{\mathcal{L}_0} v^-(x) 
&\leq 4\Lambda C_\varphi \int_{\ve y \ve > 4^{-(k+1)}R} \left( \left( \frac{16\ve y \ve}{4^{-k}R} \right)^\alpha -1 \right) \frac{dy}{\ve y \ve^n \varphi(\ve y \ve)} \\
&= C C_\varphi \int_{4^{-(k+1)}R}^\infty \left( \left( \frac{16r}{4^{-k}R} \right)^\alpha - 1 \right) \frac{dr}{r \varphi(r)} \\
&\leq CC_\varphi a \frac{(4^{-(k+1)}R)^{\us}}{\varphi(4^{-(k+1)}R)} \int_{4^{-(k+1)}R}^\infty \left( \left( \frac{16r}{4^{-k}R} \right)^\alpha - 1 \right) r^{-1-\us} \, dr.
\end{align*}
If we have taken $\alpha < \sigma_0$, then
\begin{align*}
\int_{4^{-(k+1)}R}^\infty \left( \left( \frac{16r}{4^{-k}R} \right)^\alpha - 1 \right) r^{-1-\us} \, dr = \left( \frac{4^\alpha}{\us - \alpha} - \frac{1}{\us} \right) (4^{-(k+1)}R)^{-\us}.
\end{align*}
Since
\begin{align*}
\frac{4^\alpha}{\us - \alpha} - \frac{1}{\us} = \frac{\us (4^\alpha - 1) + \alpha}{\us(\us - \alpha)} \leq \frac{2(4^\alpha - 1) + \alpha}{\sigma_0 (\sigma_0 - \alpha)} =: f(\alpha, \sigma_0),
\end{align*}
we have
\begin{align*}
\mathcal{M}^+_{\mathcal{L}_0} v^-(x) \leq \frac{C C_\varphi a}{\varphi(4^{-(k+1)}R)} f(\alpha, \sigma_0).
\end{align*}
Hence,
\begin{align*}
\mathcal{M}^-_{\mathcal{L}_0} w \leq 2\varepsilon_0 4^{\alpha k} \frac{\uC(R)}{R^2(\uC + \oC)} + \frac{C C_\varphi a}{\varphi(4^{-(k+1)}R)} f
\end{align*}
in $B_{3 \cdot 4^{-(k+1)} R}$. Note that the same holds in $B_{4^{-(k+1)}R}(x)$ for $x \in B_{\frac{1}{2}4^{-k}R}$. We applying Theorem \ref{t:weakHI} to $w$ in $B_{4^{-(k+1)}R}(x)$ to obtain
\begin{align*}
&\frac{\ve B_{4^{-(k+1)}R} \ve}{2} \leq \lv \lb w \geq 1 \rb \cap B_{4^{-(k+1)}R} \rv \\
&\leq C \left( \frac{R}{4^{k+1}} \right)^n \left( w(x) + \left( 2\varepsilon_0 4^{\alpha k} \frac{\uC(R)}{R^2(\uC + \oC)} + \frac{C C_\varphi af}{\varphi(\frac{R}{4^{k+1}})} \right) \left( \frac{R}{4^{k+1}} \right)^2 \frac{\uC + \oC}{\uC(\frac{R}{4^{k+1}})} \right)^\varepsilon \\
&\leq C \left( \frac{R}{4^{k+1}} \right)^n \left( w(x) + \frac{\varepsilon_0}{8} 4^{(\alpha -2)k} \frac{\uC (R)}{\uC(4^{-(k+1)} R)} + C\frac{(4^{-(k+1)}R)^2 f}{\varphi(4^{-(k+1)} R) \uC(4^{-(k+1)} R)} \right)^\varepsilon.
\end{align*}
We have
\begin{align*}
\frac{\uC(R)}{\uC(4^{-(k+1)} R)} \leq 1 + a^2 4^{(2-\us)(k+1)} \leq 1 + 16 a^2 4^{(2-\us)k}
\end{align*}
by the inequality \eqref{e:wsc ratio}, and
\begin{align*}
\frac{(4^{-(k+1)}R)^2}{\varphi(4^{-(k+1)} R) \uC(4^{-(k+1)} R)} \leq a(2-\us) \leq 2a
\end{align*}
by the inequality \eqref{e:wsc 0}. Therefore, using $\alpha < \sigma_0$, we have
\begin{align*}
\theta
&\leq w(x) + \frac{\varepsilon_0}{8} 4^{(\alpha-2)k} (1+ 16 a^2 4^{(2-\us)k}) + Ca f(\alpha, \sigma_0) \\
&= w(x) + \frac{\varepsilon_0}{8} 4^{(\alpha-2)k} + 2\varepsilon_0 a^2 4^{(\alpha-\us)k} + Ca f(\alpha, \sigma_0) \\
&\leq w(x) + \frac{\varepsilon_0}{8} + 2\varepsilon_0 a^2 + Ca f(\alpha, \sigma_0),
\end{align*}
where $\theta > 0$ is a uniform constant. Notice that we have $\lim_{\alpha \rightarrow 0^+} f(\alpha, \sigma_0) = 0$. If we have chosen $\alpha$ and $\varepsilon_0$ satisfying
\begin{align*}
g(\alpha) < \frac{\theta}{4Ca^2}, \quad 4^{-\alpha} \geq 1- \theta/4, \quad \text{and} \quad \varepsilon_0 < \frac{\theta}{4}\left( \frac{1}{8} + 2a^2 \right),
\end{align*} 
then we have $w \geq \theta/2$ in $B_{\frac{1}{2}4^{-k} R}$. Thus, if we let $M_{k+1} = M_k$ and $m_{k+1} = M_k - 4^{-\alpha(k+1)}$, then 
\begin{align*}
M_{k+1} \geq u \geq m_k + \frac{M_k - m_k}{4} \theta = M_k - \left(1 - \frac{\theta}{4} \right) 4^{-\alpha k} \geq M_k - 4^{-\alpha(k+1)} = m_{k+1}
\end{align*}
in $B_{4^{-(k+1)}R}$.

On the other hand, if $\lv \lb u \geq (M_k + m_k)/2 \rb \cap B_{4^{-(k+1)}R} \rv \geq \ve B_{4^{-(k+1)}R} \ve / 2$, we define
\begin{align*}
v(x) = \frac{M_k - u(x)}{(M_k - m_k)/2}
\end{align*}
and continue in the same way using that $\mathcal{M}^+_{\mathcal{L}_0} u \geq - \varepsilon_0 \frac{\uC(R)}{R^2(\uC + \oC)}$.
\end{proof}

\section*{Acknowledgement}

The research of Minhyun Kim is supported by
 the National Research Foundation of Korea (NRF) grant funded by the Korea government (MSIP) : NRF-2016K2A9A2A13003815. 
The research of Ki-Ahm Lee is supported by the National Research Foundation of Korea(NRF) grant funded by the Korea government(MSIP) : NRF-2015R1A4A1041675.

\end{document}